\documentclass[12pt]{article}
\usepackage[psamsfonts]{amssymb}
\usepackage{amsthm,amsmath}

\setlength{\topmargin}{-1.54cm} \setlength{\oddsidemargin}{-0.04cm} \setlength{\textwidth}{16.3cm} \setlength{\textheight}{24.1cm}
\setlength{\unitlength}{7ex}

\title{Representations and characterizations of polynomial functions on chains}

\author{Miguel Couceiro \\
Mathematics Research Unit, University of Luxembourg \\
162A, avenue de la Fa\"{\i}encerie, L-1511 Luxembourg, Luxembourg \\
miguel.couceiro[at]uni.lu
\and %
Jean-Luc Marichal \\
Mathematics Research Unit, University of Luxembourg \\
162A, avenue de la Fa\"{\i}encerie, L-1511 Luxembourg, Luxembourg \\
jean-luc.marichal[at]uni.lu }

\date{Revised, May 25, 2009}

\begin{document}
\maketitle

\theoremstyle{plain}
\newtheorem{theorem}{Theorem}
\newtheorem{lemma}[theorem]{Lemma}
\newtheorem{proposition}[theorem]{Proposition}
\newtheorem{corollary}[theorem]{Corollary}
\newtheorem{fact}[theorem]{Fact}

\theoremstyle{definition}
\newtheorem{definition}[theorem]{Definition}
\newtheorem{example}[theorem]{Example}

\theoremstyle{remark}
\newtheorem{conjecture}{Conjecture}
\newtheorem{remark}{Remark}

\newcommand{\card}[1]{\ensuremath{\lvert{#1}\rvert}}
\newcommand{\vect}[1]{\ensuremath{\mathbf{#1}}} 
\newcommand{\co}[1]{\ensuremath{\overline{#1}}}
\def\median{\mathop{\rm med}\nolimits}

\begin{abstract}
We are interested in representations and characterizations of lattice polynomial functions $f\colon L^n\to L$, where $L$ is a given bounded
distributive lattice. In companion papers \cite{CouMar,CouMar1}, we investigated certain representations and provided various characterizations of
these functions both as solutions of certain functional equations and in terms of necessary and sufficient conditions. In the present paper, we
investigate these representations and characterizations in the special case when $L$ is a chain, i.e., a totally ordered lattice. More
precisely, we discuss representations of lattice polynomial functions given in terms of standard simplices and we present new axiomatizations of
these functions by relaxing some of the conditions given in \cite{CouMar,CouMar1} and by considering further conditions, namely comonotonic minitivity
and maxitivity.
\end{abstract}

\noindent{\bf Keywords:} Lattice polynomial function, discrete Sugeno integral, term function, normal form, standard simplex, homogeneity,
strong idempotency, median decomposability, comonotonicity.

\section{Introduction}

In \cite{CouMar,CouMar1}, the class of (\emph{lattice}) \emph{polynomial functions}, i.e., functions representable by combinations of variables and
constants using the lattice operations $\wedge$ and $\vee$, was considered  and characterized
both as solutions of certain functional equations  and in terms of necessary and sufficient conditions rooted in aggregation theory.

Formally, let $L$ be a bounded distributive lattice with operations $\wedge$ and $\vee$, and with least and greatest elements $0$ and $1$,
respectively. An \emph{$n$-ary polynomial function} on $L$ is any function $f\colon L^n\to L$ which can be obtained by finitely many applications of
the following rules:
\begin{enumerate}
\item[(i)] For each $i\in [n]=\{1,\ldots,n\}$ and each $c\in L$, the projection $\vect{x}\mapsto x_i$ and the constant function $\vect{x}\mapsto
c$ are polynomial functions from $L^n$ to $L$.

\item[(ii)] If $f$ and $g$ are polynomial functions from $L^n$ to $L$, then $f\vee g$ and $f\wedge g$ are polynomial functions from $L^n$ to
$L$.
\end{enumerate}
Polynomial functions are also called lattice functions (Goodstein~\cite{Goo67}), algebraic functions (Burris and Sankappanavar~\cite{BurSan81})
or weighted lattice polynomial functions (Marichal~\cite{Marc}). Polynomial functions
 obtained from projections by finitely many applications of (ii) are referred to as (\emph{lattice}) \emph{term functions}.  As an example, we have the ternary \emph{median function} \begin{eqnarray*}
\median(x,y,z) &=& (x\vee y)\wedge (y\vee z)\wedge (z\vee x)\\
&=& (x\wedge y)\vee (y\wedge z)\vee (z\wedge x).
\end{eqnarray*}

The recent interest by aggregation theorists in this class of polynomial functions is partially motivated by its connection to noteworthy
aggregation functions such as the (\emph{discrete}) \emph{Sugeno integral}, which was introduced by Sugeno~\cite{Sug74,Sug77} and widely
investigated in aggregation theory, due to the many applications in fuzzy set theory, data fusion, decision making, image analysis, etc. As
shown in \cite{Marc}, the discrete Sugeno integrals are nothing other than those polynomial functions $f\colon L^n\to L$ which are
idempotent, that is, satisfying $f(x,\ldots,x)=x$. For general background on aggregation theory, see \cite{BelPraCal07,GraMurSug00} and for a
recent reference, see \cite{GraMarMesPap09}.

In this paper, we refine our previous results in the particular case when $L$ is a chain, by relaxing our conditions and proposing weak
analogues of those properties used in \cite{CouMar,CouMar1}, and then providing characterizations of polynomial functions, accordingly. Moreover, and
motivated by the axiomatizations of the discrete Sugeno integrals established by de Campos and Bola\~{n}os~\cite{deCBol92} (in the case when
$L=[0,1]$ is the unit real interval), we present further and alternative characterizations of polynomial functions given in terms of comonotonic
minitivity and maxitivity. As particular cases, we consider the subclass of discrete Sugeno integrals as well as that of term functions.

The current paper is organized as follows. We start in {\S}2 by introducing the basic notions needed in this paper and presenting the
characterizations of lattice polynomial functions on arbitrary (possibly infinite) bounded distributive lattices, established in \cite{CouMar,CouMar1}.
Those characterizations are reassembled in Theorem~\ref{mainChar}. We discuss representations of polynomial functions in normal form (such as
the classical disjunctive and conjunctive normal forms) and introduce variant representations in the case of chains and given in terms of
standard simplices in {\S}3. In {\S}4, we provide characterizations of polynomial functions on chains given in terms of weak analogues of the
properties used in Theorem~\ref{mainChar} as well as in terms of comonotonic minitivity and maxitivity. The subclasses of discrete Sugeno
integrals and of term functions are then axiomatized in {\S}5 using the results obtained in the previous section.


\section{Basic notions and terminology}

Throughout this paper, let $L$ be a bounded distributive lattice with operations $\wedge$ and $\vee$, and with least and greatest elements $0$
and $1$, respectively. For $a,b\in L$, $a\leqslant b$ simply means that $a\wedge b=a$ or, equivalently, $a\vee b=b$.
 A \emph{chain} is simply a lattice such that for every $a,b\in L$ we have $a\leqslant b$ or
$b\leqslant a$. A subset $S$ of a lattice $L$ is said to be \emph{convex} if for every $a,b\in S$ and every $c\in L$ such that $a\leqslant
c\leqslant b$, we have $c\in S$. For any subset $S\subseteq L$, we denote by $\co{S}$ the convex hull of $S$, that is, the smallest convex
subset of $L$ containing $S$. For every $a,b\in S$ such that $a\leqslant b$, we denote by $[a,b]$ the \emph{interval} $[a,b]=\{c\in L: a\leqslant c\leqslant b\}$. For any integer $n\geqslant 1$, we set $[n]=\{1,\ldots,n\}$.

For an arbitrary nonempty set $A$ and a lattice $L$, the set $L^A$ of all functions from $A$ to $L$ constitutes a lattice under the operations
$$
(f\wedge g)(x)=f(x)\wedge g(x) \quad \textrm{ and } \quad (f\vee g)(x)=f(x)\vee g(x),
$$
for every $f,g\in L^A$. In particular, any lattice $L$ induces a lattice structure on the Cartesian product $L^n$, $n\geqslant 1$, by defining
$\wedge$ and $\vee$ componentwise, i.e.,
\begin{eqnarray*}
(a_1,\ldots ,a_n)\wedge (b_1,\ldots ,b_n) &=& (a_1\wedge b_1, \ldots , a_n\wedge b_n), \\
(a_1,\ldots ,a_n)\vee (b_1,\ldots ,b_n) &=& (a_1\vee b_1, \ldots , a_n\vee b_n).
\end{eqnarray*}
We denote the elements of $L$ by lower case letters $a,b,c,\ldots$, and the elements of $L^n$, $n>1$, by bold face letters
$\vect{a},\vect{b},\vect{c},\ldots$. We also use $\vect{0}$ and $\vect{1}$ to denote the least element and greatest element, respectively, of
$L^n$. For $c\in L$ and $\vect{x}=(x_1,\ldots ,x_n)\in L^n$, set
$$
\vect{x}\wedge c=(x_1\wedge c, \ldots , x_n\wedge c) \quad \textrm{and} \quad \vect{x}\vee c=(x_1\vee c, \ldots , x_n\vee c).
$$

The \emph{range} of a function $f\colon L^{n}\rightarrow L$ is defined by $\mathcal{R}_f=\{f(\vect{x}) : \vect{x}\in L^n\}$. A function $f\colon
L^{n}\rightarrow L$ is said to be \emph{nondecreasing} (\emph{in each variable}) if, for every $\vect{a}, \vect{b}\in L^n$ such that
$\vect{a}\leqslant\vect{b}$, we have $f(\vect{a})\leqslant f(\vect{b})$. Note that if $f$ is nondecreasing, then
$\co{\mathcal{R}}_f=[f(\vect{0}),f(\vect{1})]$.

Let $S$ be a nonempty subset of $L$. A function $f\colon L^{n}\rightarrow L$ is said to be
\begin{itemize}
\item \emph{$S$-idempotent} if for every $c\in S$, we have $f(c,\ldots,c)=c$.

\item \emph{$S$-min homogeneous} if for every $\vect{x}\in L^n$ and every $c\in S$, we have
\begin{equation}\label{minHom}
f(\vect{x}\wedge c) = f(\vect{x})\wedge c.
\end{equation}

\item \emph{$S$-max homogeneous} if for every $\vect{x}\in L^n$ and every $c\in S$, we have
\begin{equation}\label{maxHom}
f(\vect{x}\vee c) = f(\vect{x})\vee c.
\end{equation}

\item \emph{horizontally $S$-minitive} if for every $\vect{x}\in L^n$ and every $c\in S$, we have
\begin{align}\label{HorMin}
f(\vect{x}) = f(\vect{x}\vee c)\wedge f([\vect{x}]^c),
\end{align}
where $[\vect{x}]^c$ is the $n$-tuple whose $i$th component is $1$, if $x_i\geqslant c$, and $x_i$, otherwise.

\item \emph{horizontally $S$-maxitive} if for every $\vect{x}\in L^n$ and every $c\in S$, we have
\begin{align}\label{HorMax}
 f(\vect{x}) = f(\vect{x}\wedge c)\vee f([\vect{x}]_c),
\end{align}
where $[\vect{x}]_c$ is the $n$-tuple whose $i$th component is $0$, if $x_i\leqslant c$, and $x_i$, otherwise.

\item \emph{median decomposable} if, for every $\vect{x}\in L^n$ and every $k\in [n]$, we have
\begin{align}\label{MedDecomposition}
f(\vect{x})=\median\big(f(\vect{x}^{0}_{k}), x_k, f(\vect{x}^{1}_{k})\big),
\end{align}
where $\vect{x}^{c}_{k} = (x_1,\ldots, x_{k-1},c,x_{k+1},\ldots ,x_n)$ for any $c\in L$.

\item \emph{strongly idempotent} if, for every $\vect{x}\in L^n$ and every $k\in [n]$, we have
$$
f(x_1,\ldots,x_{k-1},f(\vect{x}),x_{k+1},\ldots,x_n)=f(\vect{x}).
$$
\end{itemize}

\begin{remark}
In the case when $S=L$ is the real interval $[0,1]$, the concepts of $S$-min and $S$-max homogeneity were used by Fodor and
Roubens~\cite{FodRou95} to specify certain classes of aggregation functions (for an earlier reference, see Bassanezi and Greco~\cite{BasGre88}), and the concept of horizontal $S$-maxitivity was introduced by
Benvenuti et al.~\cite{BenMesViv02} as a general property of the Sugeno integral. The concept of median decomposability was introduced in \cite{Marc} and that of strong idempotency in \cite{CouMar,CouMar1} as properties of polynomial functions.
\end{remark}

We say that a function $f\colon L^{n}\rightarrow L$ has a \emph{componentwise convex range} if either $n=1$ and $f$ has a convex range, or $n>1$
and for every $\vect{a}\in L^n$ and every $k\in [n]$, the unary function $f_{\vect{a}}^k\colon L\to L$, given by $f_{\vect{a}}^k(x)=f(\vect{a}_k^x)$ has a convex range.

The following theorem reassembles the various characterizations of polynomial functions, established in \cite{CouMar}, in the particular case when $L$ is a bounded chain.

\begin{theorem} \label{mainChar}
Let $L$ be a bounded chain and let $f\colon L^{n}\rightarrow L$ be a function. The following conditions are equivalent:
\begin{enumerate}
\item[(i)] $f$ is a polynomial function.

\item[(ii)] $f$ is median decomposable.

\item[(iii)] $f$ is nondecreasing, strongly idempotent, has a convex range and a componentwise convex range.

\item[(iv)] $f$ is nondecreasing, $\co{\mathcal{R}}_f$-min homogeneous, and $\co{\mathcal{R}}_f$-max homogeneous.

\item[(v)] $f$ is nondecreasing, $\co{\mathcal{R}}_f$-min homogeneous, and horizontally $\co{\mathcal{R}}_f$-maxitive.

\item[(vi)] $f$ is nondecreasing, horizontally $\co{\mathcal{R}}_f$-minitive, and $\co{\mathcal{R}}_f$-max homogeneous.

\item[(vii)] $f$ is nondecreasing, $\co{\mathcal{R}}_f$-idempotent, horizontally $\co{\mathcal{R}}_f$-minitive, and horizontally
$\co{\mathcal{R}}_f$-maxitive.
\end{enumerate}
\end{theorem}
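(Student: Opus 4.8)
The plan is to make (ii), median decomposability, the hub of the argument: I would first prove the equivalence (i) $\Leftrightarrow$ (ii), then show that (i) implies each of (iii)--(vii), and finally that each of (iii)--(vii) implies (ii), thereby closing every loop. Since a bounded chain is in particular a bounded distributive lattice, each individual implication is a specialization of the corresponding result of \cite{CouMar}; what the statement really records is that on a chain the seven conditions collapse onto one another, so I would check the chain-specific simplifications directly rather than merely invoke the general machinery.

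For (i) $\Rightarrow$ (ii), fix a coordinate $k$ and freeze the remaining variables at constants. The resulting unary section $t\mapsto f(\vect{x}_k^t)$ is again a polynomial function, hence of the form $t\mapsto (p\wedge t)\vee q$; because $f$ is nondecreasing this can be rewritten as $(f(\vect{x}_k^1)\wedge t)\vee f(\vect{x}_k^0)$ with $f(\vect{x}_k^0)\leqslant f(\vect{x}_k^1)$. Writing $a=f(\vect{x}_k^0)$ and $b=f(\vect{x}_k^1)$, totality of the order gives $(b\wedge t)\vee a=(a\vee t)\wedge b=\median(a,t,b)$, which is precisely \eqref{MedDecomposition}. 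For (ii) $\Rightarrow$ (i), I would induct on $n$: decomposing along $x_1$ writes $f$ as $\median(h_0,x_1,h_1)$, where $h_0,h_1$ are the $(n-1)$-ary sections obtained by setting $x_1=0$ and $x_1=1$; these inherit median decomposability, so are polynomial functions by the induction hypothesis, and the median of polynomial functions is a polynomial function.

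The forward implications (i) $\Rightarrow$ (iii)--(vii) are routine verifications on the disjunctive normal form $f(\vect{x})=\bigvee_{S\subseteq[n]}\big(a_S\wedge\bigwedge_{i\in S}x_i\big)$ with $a_S=f(\vect{1}_S)$. Monotonicity is immediate, and each homogeneity, horizontal, or idempotency identity is obtained by distributing the relevant $\wedge c$ or $\vee c$ across the normal form; the only delicate point is the empty-set term $a_\emptyset=f(\vect{0})$, which is kept in line by the bounds $f(\vect{0})\leqslant c\leqslant f(\vect{1})$ that hold for every $c\in\co{\mathcal{R}}_f=[f(\vect{0}),f(\vect{1})]$.

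I expect the converse bundle (iii)--(vii) $\Rightarrow$ (ii) to be the main obstacle. For (iv) the target is again the sectionwise identity $f(\vect{x})=(f(\vect{x}_k^1)\wedge x_k)\vee f(\vect{x}_k^0)$, but the difficulty is that $\co{\mathcal{R}}_f$-min and $\co{\mathcal{R}}_f$-max homogeneity act on all coordinates at once while the median decomposition is local in $x_k$; bridging this gap is exactly where totality of the order and, for (iii), convexity of the range and of every componentwise range are needed, since they guarantee that the intermediate values required to localize the homogeneity identities to a single coordinate are actually attained. Conditions (v), (vi), and (vii) are harder still, as one must show that horizontal $\co{\mathcal{R}}_f$-minitivity or $\co{\mathcal{R}}_f$-maxitivity can stand in for one of the two homogeneities, while (iii) demands converting strong idempotency into the same normal form. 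In each case the strategy is uniform: use the hypotheses together with the chain structure to recover, coordinate by coordinate, the value $(b\wedge x_k)\vee a$, and then read off \eqref{MedDecomposition}.
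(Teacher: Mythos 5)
The parts you actually carry out are correct: your hub equivalence (i) $\Leftrightarrow$ (ii) via unary sections (every unary polynomial function has the form $t\mapsto(p\wedge t)\vee q$, whence $f(\vect{x})=\median\big(f(\vect{x}_k^0),x_k,f(\vect{x}_k^1)\big)$), together with induction on $n$ for the converse, is essentially the argument of \cite{Marc} and is valid over any bounded distributive lattice; and the forward implications (i) $\Rightarrow$ (iii)--(vii) are indeed routine DNF computations once one notes that $f(\vect{0})\leqslant c\leqslant f(\vect{1})$ for every $c\in\co{\mathcal{R}}_f$ keeps the empty-set term in line.

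The genuine gap is the converse bundle (iii)--(vii) $\Rightarrow$ (ii), which is the entire content of the theorem and which your proposal replaces by an announced ``uniform strategy'' of recovering $(b\wedge x_k)\vee a$ coordinate by coordinate. This is a declaration of intent, not a proof, and the strategy as stated is doubtful because the hypotheses do not localize: the homogeneity identities rescale \emph{all} coordinates simultaneously. Concretely, under (iv), to verify the decomposition at coordinate $k$ in the case $x_k\leqslant a:=f(\vect{x}_k^0)$ you must show $f(\vect{x})\leqslant a$; min homogeneity with $c=a$ yields only $f(\vect{x}\wedge a)=f(\vect{x})\wedge a$, while monotonicity gives $f(\vect{x}\wedge a)\leqslant f(\vect{x})$, an inequality in the unhelpful direction, and no obvious choice of $c\in\co{\mathcal{R}}_f$ produces the needed bound at the single coordinate $k$. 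The known arguments are global rather than coordinate-wise: one reconstructs the DNF by proving $f(\vect{x})=\bigvee_{I\subseteq[n]}\big(f(\vect{e}_I)\wedge\langle\bigwedge_{i\in I}x_i\rangle_f\big)$, where the inequality $\leqslant$ requires choosing $I^*$ maximizing the right-hand side, introducing $J=\{j\in[n]\colon x_j\leqslant f(\vect{e}_{I^*})\wedge\langle\bigwedge_{i\in I^*}x_i\rangle_f\}$, and deriving contradictions using max homogeneity --- this is precisely how the present paper proves the stronger Theorem~\ref{theorem:WLP-WeakHom}, where weak homogeneity already suffices. Conditions (v)--(vii) are then reduced to the homogeneity case by lemmas trading horizontal minitivity/maxitivity plus idempotency for homogeneity (compare Lemmas~\ref{lemma:Weak15682} and~\ref{Weak-Hor-Min-Hom}), and (iii) requires a separate argument converting strong idempotency and componentwise range convexity into one of the other conditions; none of this machinery, nor any substitute for it, appears in your proposal. (Note also that the paper itself does not reprove Theorem~\ref{mainChar}; it imports it from \cite{CouMar}, so a self-contained proof would have to supply exactly the missing arguments above.)
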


\section{Representations of polynomial functions}

Polynomial functions are known to be exactly those functions which can be represented by formulas in disjunctive and conjunctive normal forms.
This fact was first observed by Goodstein~\cite{Goo67} who, in fact, showed that each polynomial function $f\colon L^{n}\rightarrow L$ is
uniquely determined by its restriction to  $\{0,1\}^n$. For a recent reference, see Rudeanu~\cite{Rud01}.

In this section we recall and refine some known results concerning normal forms of polynomial functions and, in the special case when $L$ is a
chain, we provide variant representations given in terms of standard simplices of $L^n$.

The following three results are due to Goodstein~\cite{Goo67}.

\begin{corollary}\label{Ext1}
Every polynomial function is completely determined by its restriction to $\{0,1\}^n$.
\end{corollary}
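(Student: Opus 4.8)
The plan is to derive the corollary from the disjunctive normal form of a polynomial function. First I would observe that, since $f$ is obtained from projections and constants by finitely many applications of $\wedge$ and $\vee$, repeated use of the distributive law rewrites $f$ as a join of meets of variables and constants; as lattice polynomials involve no complementation, each such meet has the form $c_T\wedge\bigwedge_{i\in T}x_i$ for some $T\subseteq[n]$ and some coefficient $c_T\in L$, with the convention that the empty meet is the top element (so the term indexed by $T=\emptyset$ is the constant term). Gathering terms with equal index set and setting $c_T=0$ for index sets that do not occur, I obtain
\[
f(\vect{x}) = \bigvee_{T\subseteq[n]}\Big(c_T\wedge\bigwedge_{i\in T}x_i\Big).
\]

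Next I would re-express the coefficients through the values of $f$ on $\{0,1\}^n$. For $S\subseteq[n]$, let $\vect{e}_S\in\{0,1\}^n$ be the tuple whose $i$th component is $1$ if $i\in S$ and $0$ otherwise. Evaluating the above at $\vect{e}_S$ and noting that $\bigwedge_{i\in T}(\vect{e}_S)_i$ equals $1$ when $T\subseteq S$ and $0$ otherwise, I get $f(\vect{e}_S)=\bigvee_{T\subseteq S}c_T$. The key step is then to show that replacing each $c_T$ by $f(\vect{e}_T)$ leaves the function unchanged, that is,
\[
f(\vect{x}) = \bigvee_{S\subseteq[n]}\Big(f(\vect{e}_S)\wedge\bigwedge_{i\in S}x_i\Big).
\]
This rests on distributivity together with the monotonicity fact that $T\subseteq S$ implies $\bigwedge_{i\in S}x_i\leqslant\bigwedge_{i\in T}x_i$: the ``$\geqslant$'' direction follows by taking $S=T$ in the expansion $f(\vect{e}_T)=\bigvee_{T'\subseteq T}c_{T'}\geqslant c_T$, and the ``$\leqslant$'' direction from $c_T\wedge\bigwedge_{i\in S}x_i\leqslant c_T\wedge\bigwedge_{i\in T}x_i\leqslant f(\vect{x})$ whenever $T\subseteq S$.

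Once this canonical form is established, the corollary is immediate: its right-hand side depends on $f$ only through the values $f(\vect{e}_S)$ for $S\subseteq[n]$, all of which are recorded by the restriction of $f$ to $\{0,1\}^n$. Hence any two polynomial functions that agree on $\{0,1\}^n$ must coincide on all of $L^n$.

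I expect the main obstacle to be precisely the canonical-form identity in the second step: passing from an arbitrary disjunctive normal form to the one whose coefficients are exactly the values $f(\vect{e}_T)$ requires a careful double inequality, since a priori $c_T$ and $f(\vect{e}_T)$ need not coincide. I would also note that this argument nowhere uses that $L$ is a chain, only its distributivity, so it holds for every bounded distributive lattice; in particular one need not invoke the chain-specific equivalences of Theorem~\ref{mainChar}.
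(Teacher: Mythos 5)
Your proof is correct, and it follows essentially the route the paper itself relies on: the paper states this result as Goodstein's, with the underlying argument being exactly the passage to a disjunctive normal form and the observation that the coefficients can be taken to be $\alpha_f(S)=f(\vect{e}_S)$ (this is the map $\alpha_f$ appearing in Proposition~\ref{prop:Uniqueness}, which asserts $\alpha_f\in\mathrm{DNF}(f)$). Your double-inequality argument for the canonical form, and the remark that only distributivity (not the chain hypothesis) is used, are both accurate.
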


\begin{corollary}\label{Ext2}
A function $g\colon \{0,1\}^n\rightarrow L$ can be extended to a polynomial function $f\colon L^{n}\rightarrow L$ if and only if it is
nondecreasing. In this case, the extension is unique.
\end{corollary}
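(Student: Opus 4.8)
The plan is to dispatch uniqueness first and then treat the two directions of the equivalence separately, the heart of the argument being an explicit disjunctive normal form construction for the sufficiency direction. Uniqueness is immediate from Corollary~\ref{Ext1}: if two polynomial functions both restrict to $g$ on $\{0,1\}^n$, then they agree on $\{0,1\}^n$ and hence coincide. For necessity, I would first observe that every polynomial function is nondecreasing, which follows by a straightforward induction on the construction rules (i)--(ii): projections and constant functions are nondecreasing, and the operations $\wedge$ and $\vee$ preserve nondecreasingness since they are themselves order-preserving on $L$. Consequently, if $g$ extends to a polynomial function $f$, then $g$ is the restriction of a nondecreasing function and is therefore nondecreasing.

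For sufficiency, assume $g\colon\{0,1\}^n\to L$ is nondecreasing and define $f\colon L^n\to L$ by
$$
f(\vect{x}) = \bigvee_{\vect{a}\in\{0,1\}^n}\Big(g(\vect{a})\wedge\bigwedge_{i:\, a_i=1}x_i\Big),
$$
with the convention that an empty meet equals $1$. By rules (i) and (ii), $f$ is a polynomial function, so it remains only to check that $f$ restricts to $g$ on $\{0,1\}^n$. Fixing $\vect{x}\in\{0,1\}^n$, I would note that for $\vect{a}\in\{0,1\}^n$ the inner meet $\bigwedge_{i:\, a_i=1}x_i$ equals $1$ when $\vect{a}\leqslant\vect{x}$ and equals $0$ otherwise, whence $f(\vect{x})=\bigvee_{\vect{a}\leqslant\vect{x}}g(\vect{a})$. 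Since $g$ is nondecreasing, each term $g(\vect{a})$ with $\vect{a}\leqslant\vect{x}$ satisfies $g(\vect{a})\leqslant g(\vect{x})$, while the term $\vect{a}=\vect{x}$ contributes $g(\vect{x})$ itself; hence the join collapses to $g(\vect{x})$, as required.

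The only point demanding care is this restriction computation: one must correctly handle the degenerate term $\vect{a}=\vect{0}$ (whose empty meet is the constant $1$) and verify that the inner meets reduce to $0$ or $1$ on Boolean inputs exactly according to whether $\vect{a}\leqslant\vect{x}$. Once this is in place, the nondecreasingness of $g$ does the rest, and uniqueness of the resulting extension has already been secured via Corollary~\ref{Ext1}.
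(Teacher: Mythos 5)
Your proof is correct. Note that the paper itself offers no proof of this statement: it is one of three results attributed to Goodstein~\cite{Goo67} and simply quoted. So there is no internal argument to compare against; what you have written is the standard self-contained justification, and it meshes exactly with the machinery the paper recalls right afterwards. Your explicit extension
$$
f(\vect{x}) = \bigvee_{\vect{a}\in\{0,1\}^n}\Big(g(\vect{a})\wedge\bigwedge_{i:\, a_i=1}x_i\Big)
$$
is precisely the DNF representation of Proposition~\ref{DNF}(ii) with $\alpha(I)=g(\vect{e}_I)$, i.e.\ the map the paper later calls $\alpha_f$; your computation that on Boolean inputs the join collapses to $\bigvee_{\vect{a}\leqslant\vect{x}}g(\vect{a})=g(\vect{x})$ (using nondecreasingness of $g$) is the key step, and your handling of the empty meet for $\vect{a}=\vect{0}$ is right. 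The necessity direction (polynomial functions are nondecreasing, by induction on the construction rules) and the uniqueness claim (immediate from Corollary~\ref{Ext1}) are both sound, and you correctly avoid circularity: uniqueness rests on Corollary~\ref{Ext1}, not on your construction. In short, the proposal is a complete and correct proof, essentially reconstructing Goodstein's argument that the paper cites.
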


\begin{proposition}\label{DNF}
Let $f\colon L^{n}\rightarrow L$ be a function. The following conditions are equivalent:
\begin{enumerate}
\item[(i)] $f$ is a polynomial function.

\item[(ii)] There exists $\alpha \colon 2^{[n]}\rightarrow L$ such that $f(\vect{x})=\bigvee_{I\subseteq [n]}(\alpha(I)\wedge \bigwedge_{i\in I} x_i)$.
\item[(iii)] There exists $\beta \colon 2^{[n]}\rightarrow L$ such that $f(\vect{x})=\bigwedge_{I\subseteq [n]}(\beta(I)\vee \bigvee_{i\in I} x_i)$.
\end{enumerate}
\end{proposition}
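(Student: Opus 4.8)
The plan is to prove the two implications (ii) $\Rightarrow$ (i) and (i) $\Rightarrow$ (ii), and then to recover the equivalence with (iii) by an order-duality argument. The implication (ii) $\Rightarrow$ (i) is immediate from the definition: the right-hand side of (ii) is assembled from projections $x_i$, constant functions $\alpha(I)$, and the operations $\wedge$ and $\vee$, so it is a polynomial function by rules (i) and (ii); the identical remark applied to (iii) gives (iii) $\Rightarrow$ (i). Thus the substance lies in (i) $\Rightarrow$ (ii).

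For (i) $\Rightarrow$ (ii), I would first record that every polynomial function is nondecreasing, which follows by an easy induction on its construction since projections and constants are nondecreasing and both $\wedge$ and $\vee$ preserve this property. Given a polynomial function $f$, I would then set $\alpha(I) := f(\vect{e}_I)$, where $\vect{e}_I\in\{0,1\}^n$ denotes the characteristic vector of $I\subseteq [n]$ (with $1$ in the positions indexed by $I$ and $0$ elsewhere), and let $g$ be the function given by the right-hand side of (ii) for this choice of $\alpha$. By the already-established implication (ii) $\Rightarrow$ (i), $g$ is itself a polynomial function, so by Corollary~\ref{Ext1} it suffices to verify that $f$ and $g$ agree on $\{0,1\}^n$.

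The key computation is the evaluation of $g$ at an arbitrary characteristic vector $\vect{e}_J$. Using the convention that the empty meet equals the top element $1$, the inner meet over $i\in I$ of the coordinates of $\vect{e}_J$ equals $1$ when $I\subseteq J$ and equals $0$ otherwise, so that $g(\vect{e}_J)=\bigvee_{I\subseteq J}\alpha(I)=\bigvee_{I\subseteq J}f(\vect{e}_I)$. Since $I\subseteq J$ forces $\vect{e}_I\leqslant\vect{e}_J$ and $f$ is nondecreasing, every term of this join is at most $f(\vect{e}_J)$, while the term indexed by $I=J$ attains it; the join therefore collapses to $f(\vect{e}_J)$, yielding $g=f$ on $\{0,1\}^n$ and hence $f=g$ everywhere. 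This monotonicity-driven collapse of the join is the one place requiring genuine care; the rest is bookkeeping with the meet and join conventions.

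Finally, for (i) $\Leftrightarrow$ (iii) I would appeal to order-duality. The class of polynomial functions is stable under the order-reversing correspondence interchanging $\wedge$ with $\vee$ and $0$ with $1$, so $f$ is a polynomial function on $L$ if and only if it is one on the dual chain $L^\partial$. Applying the already-proved equivalence (i) $\Leftrightarrow$ (ii) inside $L^\partial$ and translating $\vee^\partial,\wedge^\partial$ back into $\wedge,\vee$ converts the disjunctive normal form into the conjunctive normal form of (iii), with $\beta(I)$ the value of $f$ at the suitable $0$–$1$ tuple dictated by the duality. I expect the only subtlety here to be tracking which tuples and which of the two dual constants play the role previously played by $\alpha$, rather than any real difficulty; one could equally give a direct verification on $\{0,1\}^n$ mirroring the argument above.
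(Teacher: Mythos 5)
Your proof is logically sound, but the paper itself offers no proof of Proposition~\ref{DNF} to compare against: it is one of three results quoted from Goodstein~\cite{Goo67}, whose own argument (see also Rudeanu~\cite{Rud01}) is a structural induction --- projections and constants are trivially in DNF, a join of two DNF expressions is again a DNF expression, and a meet of two DNF expressions is rewritten as a DNF expression using distributivity. Your route is genuinely different: you delegate all the real work to Corollary~\ref{Ext1}. The parts you carry out yourself are correct --- monotonicity of polynomial functions by induction on the construction, the evaluation $g(\vect{e}_J)=\bigvee_{I\subseteq J}f(\vect{e}_I)=f(\vect{e}_J)$ via the monotone collapse of the join, and the self-duality argument giving (iii). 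What you should be aware of is that with this structure every use of distributivity is hidden inside Corollary~\ref{Ext1}, and distributivity is not a side condition here: in the five-element nondistributive lattice with three atoms $a,b,c$, the polynomial functions $(x\vee y)\wedge z$ and $(x\wedge z)\vee (y\wedge z)$ agree on $\{0,1\}^3$ but take the values $c$ and $0$ at $(a,b,c)$, so Corollary~\ref{Ext1} fails without distributivity. Moreover, in Goodstein's development the determination of a polynomial function by its restriction to $\{0,1\}^n$ is a \emph{consequence} of the DNF representation, not a prior fact, so as a self-contained proof your argument is circular in substance; it is admissible only relative to the paper's expository ordering, in which the corollaries are stated first (and the paper does argue in exactly this style in its proof of Proposition~\ref{SimplexDNF}, where Corollary~\ref{Ext2} is invoked to identify $f$ with the polynomial extension $f'$). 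If you want a proof from scratch, replace the appeal to Corollary~\ref{Ext1} by the structural induction sketched above; your verification on $\{0,1\}^n$ then becomes the proof that the coefficients can always be taken to be $\alpha(I)=f(\vect{e}_I)$. One last slip: Proposition~\ref{DNF} is stated for an arbitrary bounded distributive lattice $L$, not a chain, so in the duality step you should speak of the dual lattice $L^\partial$ rather than the ``dual chain''; the duality argument itself is unaffected, since the defining rules for polynomial functions are self-dual.
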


We shall refer to the expressions given in $(ii)$ and $(iii)$ of Proposition~\ref{DNF} as the \emph{disjunctive normal form} (DNF)
representation and the \emph{conjunctive normal form} (CNF) representation, respectively, of the polynomial function $f$.

\begin{remark}
By requiring $\alpha$ and $\beta$ to be nonconstant functions from $2^{[n]}$ to $\{0,1\}$ and satisfying $\alpha(\varnothing)=0$ and
$\beta(\varnothing)=1$, respectively, we obtain the analogue of Proposition~\ref{DNF} for term functions.
\end{remark}

For each polynomial function $f\colon L^{n}\rightarrow L$, set
\begin{eqnarray*}
\mathrm{DNF}(f) &=& \Big\{\alpha \in L^{2^{[n]}} : f(\vect{x})=\bigvee_{I\subseteq [n]}\big(\alpha(I)\wedge \bigwedge_{i\in I} x_i\big)\Big\},\\
\mathrm{CNF}(f) &=& \Big\{\beta \in L^{2^{[n]}} : f(\vect{x})=\bigwedge_{I\subseteq [n]}\big(\beta(I)\vee \bigvee_{i\in I} x_i\big)\Big\}.
\end{eqnarray*}

A complete description of the sets $\mathrm{DNF}(f)$ and $\mathrm{CNF}(f)$ can be found in \cite{CouMar1}. As we
are concerned by the case when $L$ is a chain, we recall the description only in this special case; see \cite{Marc}.

For each $I\subseteq [n]$, let $\vect{e}_I$ be the element of $L^n$ whose $i$th component is $1$, if $i\in I$, and $0$, otherwise. Let $\alpha_f
\colon 2^{[n]}\rightarrow L$ be the function given by $\alpha_f(I)=f(\vect{e}_I)$ and consider the function $\alpha^{*}_f \colon
2^{[n]}\rightarrow L$ defined by
\[
\alpha^{*}_f(I) =
\begin{cases}
\alpha_f(I), & \text{if $\bigvee_{J\varsubsetneq I}\alpha_f(J) < \alpha_f(I)$,} \\
0, & \text{otherwise.}
\end{cases}
\]
Dually, Let $\beta_f \colon 2^{[n]}\rightarrow L$ be the function given by $\beta_f(I)=f(\vect{e}_{[n]\setminus I})$ and consider the function
$\beta^{*}_f \colon 2^{[n]}\rightarrow L$ defined by
\[
\beta^{*}_f(I) =
\begin{cases}
\beta_f(I), & \text{if $\bigwedge_{J \varsubsetneq
I}\beta_f(J)>\beta_f(I)$,} \\
1, & \text{otherwise.}
\end{cases}
\]

\begin{proposition}\label{prop:Uniqueness}
Let $L$ be a bounded chain and let $f\colon L^{n}\rightarrow L$ be a polynomial function.  Then
\begin{enumerate}
\item[(i)] $\mathrm{DNF}(f)=[\alpha^{*}_f,\alpha_f]$ and $f$ has a unique DNF representation if and only if $\bigvee_{J\varsubsetneq I}\alpha_f
(J)< \alpha_f(I)$ for every $I\subseteq [n]$,

\item[(ii)] $\mathrm{CNF}(f)=[\beta_f,\beta^{*}_f]$ and $f$ has a unique CNF representation if and only if $\bigwedge_{J \varsubsetneq
I}\beta_f(J)>\beta_f(I)$ for every $I\subseteq [n]$.
\end{enumerate}
In particular, $\alpha_f$ and $\beta_f$ are the unique isotone and antitone, respectively, maps in $\mathrm{DNF}(f)$ and $\mathrm{CNF}(f)$,
respectively.
\end{proposition}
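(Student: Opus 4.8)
The plan is to reduce membership in $\mathrm{DNF}(f)$ to a system of lattice equations indexed by the subsets of $[n]$, and then to solve that system using the chain structure of $L$. Throughout, write $g_\alpha(\vect{x})=\bigvee_{I\subseteq[n]}\big(\alpha(I)\wedge\bigwedge_{i\in I}x_i\big)$ for the candidate DNF with coefficient map $\alpha$. The first step is the computation $g_\alpha(\vect{e}_K)=\bigvee_{I\subseteq K}\alpha(I)$, valid for every $K\subseteq[n]$, which follows at once from the observation that $\bigwedge_{i\in I}(\vect{e}_K)_i$ equals $1$ when $I\subseteq K$ and $0$ otherwise. Since both $g_\alpha$ and $f$ are polynomial functions (the former by Proposition~\ref{DNF}), Corollary~\ref{Ext1} shows that $g_\alpha=f$ if and only if they agree on $\{0,1\}^n$; this yields the pivotal equivalence, which I call $(\mathrm{E})$: one has $\alpha\in\mathrm{DNF}(f)$ if and only if $\bigvee_{I\subseteq K}\alpha(I)=\alpha_f(K)$ for every $K\subseteq[n]$. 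Taking coefficients $\alpha_f$ and using that $\alpha_f\colon I\mapsto f(\vect{e}_I)$ is isotone (as $f$ is nondecreasing), one sees that $\bigvee_{I\subseteq K}\alpha_f(I)=\alpha_f(K)$, so $\alpha_f$ itself satisfies $(\mathrm{E})$ and lies in $\mathrm{DNF}(f)$.

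Next I would establish the two bounds defining the interval $[\alpha^{*}_f,\alpha_f]$. The upper bound $\alpha\leqslant\alpha_f$ is immediate and needs no hypothesis on $L$: specializing $(\mathrm{E})$ to $K=I$ gives $\alpha(I)\leqslant\bigvee_{J\subseteq I}\alpha(J)=\alpha_f(I)$. The lower bound $\alpha\geqslant\alpha^{*}_f$ is where the chain is used. It suffices to treat those $I$ with $\alpha^{*}_f(I)=\alpha_f(I)$, that is, with $\bigvee_{J\varsubsetneq I}\alpha_f(J)<\alpha_f(I)$, the remaining case $\alpha^{*}_f(I)=0$ being trivial. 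For such an $I$, equation $(\mathrm{E})$ reads $\alpha(I)\vee\bigvee_{J\varsubsetneq I}\alpha(J)=\alpha_f(I)$, while the upper bound gives $\bigvee_{J\varsubsetneq I}\alpha(J)\leqslant\bigvee_{J\varsubsetneq I}\alpha_f(J)<\alpha_f(I)$. Because $L$ is a chain, a finite join equals its largest term, so the value $\alpha_f(I)$ cannot come from the $J$-terms and must be contributed by $\alpha(I)$, forcing $\alpha(I)=\alpha_f(I)=\alpha^{*}_f(I)$. This proves $\mathrm{DNF}(f)\subseteq[\alpha^{*}_f,\alpha_f]$.

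For the reverse inclusion, the key lemma is that $\alpha^{*}_f$ \emph{also} satisfies $(\mathrm{E})$, i.e.\ $\bigvee_{I\subseteq K}\alpha^{*}_f(I)=\alpha_f(K)$ for every $K$. Granting this, any $\alpha$ with $\alpha^{*}_f\leqslant\alpha\leqslant\alpha_f$ is squeezed, since $\alpha_f(K)=\bigvee_{I\subseteq K}\alpha^{*}_f(I)\leqslant\bigvee_{I\subseteq K}\alpha(I)\leqslant\bigvee_{I\subseteq K}\alpha_f(I)=\alpha_f(K)$, whence $\alpha\in\mathrm{DNF}(f)$ and the equality $\mathrm{DNF}(f)=[\alpha^{*}_f,\alpha_f]$ follows. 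I expect this lemma to be the main obstacle, and it is again exactly here that the chain hypothesis is indispensable. The inequality $\leqslant$ is clear from $\alpha^{*}_f\leqslant\alpha_f$; for $\geqslant$, I would fix $K$, select among the finitely many subsets $I\subseteq K$ with $\alpha_f(I)=\alpha_f(K)$ one, say $I_0$, that is minimal for inclusion, and note that every $J\varsubsetneq I_0$ then satisfies $\alpha_f(J)<\alpha_f(I_0)$. Since a finite join in a chain equals its maximum, this gives $\bigvee_{J\varsubsetneq I_0}\alpha_f(J)<\alpha_f(I_0)$, hence $\alpha^{*}_f(I_0)=\alpha_f(I_0)=\alpha_f(K)$, so the join $\bigvee_{I\subseteq K}\alpha^{*}_f(I)$ already attains $\alpha_f(K)$.

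Finally, the uniqueness statements and the ``in particular'' clause drop out. The interval $[\alpha^{*}_f,\alpha_f]$ reduces to a single map precisely when $\alpha^{*}_f=\alpha_f$, which by the definition of $\alpha^{*}_f$ is governed by the strict condition $\bigvee_{J\varsubsetneq I}\alpha_f(J)<\alpha_f(I)$ holding for every $I\subseteq[n]$; this yields the uniqueness criterion in (i). For the last assertion, if $\alpha\in\mathrm{DNF}(f)$ is isotone then $\bigvee_{I\subseteq K}\alpha(I)=\alpha(K)$, so $(\mathrm{E})$ forces $\alpha(K)=\alpha_f(K)$ for all $K$, making $\alpha_f$ the unique isotone member of $\mathrm{DNF}(f)$. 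Part (ii), together with the claim that $\beta_f$ is the unique antitone map in $\mathrm{CNF}(f)$, then follows by the order-reversing duality interchanging $\wedge$ with $\vee$, $0$ with $1$, and $\vect{e}_I$ with $\vect{e}_{[n]\setminus I}$, which converts the entire DNF analysis into the CNF analysis verbatim.
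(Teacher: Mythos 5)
The paper itself offers no proof of Proposition~\ref{prop:Uniqueness}: it is recalled from \cite{CouMar1} and \cite{Marc}, so your argument has to be judged on its own merits. Its core is correct and well organized. Translating $\alpha\in\mathrm{DNF}(f)$, via Proposition~\ref{DNF} and Corollary~\ref{Ext1}, into the system $\bigvee_{I\subseteq K}\alpha(I)=\alpha_f(K)$ for all $K\subseteq[n]$ is legitimate, and your three main steps are sound: the bound $\alpha\leqslant\alpha_f$; the forcing argument giving $\alpha(I)=\alpha_f(I)$ whenever $\bigvee_{J\varsubsetneq I}\alpha_f(J)<\alpha_f(I)$; and the key lemma that $\alpha^*_f$ itself solves the system, proved by choosing $I_0\subseteq K$ minimal with $\alpha_f(I_0)=\alpha_f(K)$. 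Each of the last two invokes the chain hypothesis (a finite join equals its largest term) exactly where a general bounded distributive lattice would not suffice. The squeeze giving $[\alpha^*_f,\alpha_f]\subseteq\mathrm{DNF}(f)$, the argument that $\alpha_f$ is the unique isotone element, and the order-reversing dualization yielding part (ii) are likewise fine.

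The one gap is in your final step for the uniqueness criterion in (i). You assert that $\alpha^*_f=\alpha_f$ (equivalently, that the interval is a singleton) holds precisely when $\bigvee_{J\varsubsetneq I}\alpha_f(J)<\alpha_f(I)$ for every $I\subseteq[n]$. That equivalence breaks at coordinates where $\alpha_f(I)=0$: there the strict inequality cannot hold, since its left-hand side is $\leqslant\alpha_f(I)=0$, and yet $\alpha^*_f(I)=0=\alpha_f(I)$ by definition, so such coordinates never obstruct the collapse of the interval. Concretely, the constant function $f\equiv 0$ has a unique DNF representation ($\alpha\equiv 0$) although the stated criterion fails at every $I$; the unary identity $f(x)=x$ is also uniquely represented although the criterion fails at $I=\varnothing$ (the empty join being $0$). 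What your interval description actually proves is: $f$ has a unique DNF representation if and only if $\alpha^*_f=\alpha_f$, i.e., if and only if the strict inequality holds at every $I$ with $\alpha_f(I)\neq 0$. This defect is arguably in the proposition as stated rather than in your analysis, but since your text asserts the equivalence flatly ("is governed by"), you should either prove the corrected criterion or state explicitly that coordinates with $\alpha_f(I)=0$ must be exempted; as written, that sentence is false.
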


In the case of chains, the DNF and CNF representations of polynomial functions $f\colon L^{n}\rightarrow L$ can be refined and given in terms of
standard simplices of $L^n$ (see Proposition~\ref{SimplexDNF} below). To provide these variants, we first need the following lemma due to Dubois
and Prade~\cite{DubPra86}. For the sake of self-containment, we provide a simpler proof. Recall that
$$
\median(x_1,\ldots ,x_{2n+1})=\bigvee_{\textstyle{I\subseteq [2n+1]\atop \card{I}=n+1}}\, \bigwedge_{i\in I}x_i.
$$

\begin{lemma}\label{SimplexMedian}
Let $a_1,\ldots ,a_{n+1},b_1, \ldots, b_{n+1} \in L$ such that $a_1\leqslant \cdots \leqslant a_{n+1}$ and $b_1\geqslant \cdots \geqslant
b_{n+1}$. If $a_{n+1}\geqslant b_{n+1}$, then
\begin{eqnarray*}
\bigvee_{i\in [n+1]}(a_i\wedge b_i) = \median(a_1,\ldots ,a_n, b_1, \ldots, b_{n+1}).
\end{eqnarray*}
\end{lemma}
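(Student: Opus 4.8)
The plan is to argue directly from the stated definition of the median as a join of meets, exploiting the monotonicity of the two sequences to read off, term by term, which $(n+1)$-element subsets actually contribute to the join. Listing the $2n+1$ arguments as the $n$ values $a_1,\dots,a_n$ together with the $n+1$ values $b_1,\dots,b_{n+1}$, every $(n+1)$-subset splits into a set $I\subseteq[n]$ indexing the chosen $a$'s and a set $J\subseteq[n+1]$ indexing the chosen $b$'s, with $\card{I}+\card{J}=n+1$. The definition then reads
\begin{equation*}
\median(a_1,\dots,a_n,b_1,\dots,b_{n+1}) = \bigvee_{\card{I}+\card{J}=n+1}\Big(\bigwedge_{i\in I}a_i\wedge\bigwedge_{j\in J}b_j\Big).
\end{equation*}

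First I would simplify each inner meet: since $a_1\leqslant\cdots\leqslant a_{n+1}$ we have $\bigwedge_{i\in I}a_i=a_{\min I}$, and since $b_1\geqslant\cdots\geqslant b_{n+1}$ we have $\bigwedge_{j\in J}b_j=b_{\max J}$; these are the only places the monotonicity hypotheses enter. Grouping the subsets by $p=\card{I}$ (so $p$ ranges over $0,\dots,n$, which forces $J\neq\varnothing$) and using distributivity of $L$ in the form $\bigvee_{I,J}(u_I\wedge v_J)=(\bigvee_I u_I)\wedge(\bigvee_J v_J)$, the contribution of the block indexed by $p$ becomes $\big(\bigvee_{\card{I}=p}a_{\min I}\big)\wedge\big(\bigvee_{\card{J}=n+1-p}b_{\max J}\big)$. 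Evaluating these two joins is then immediate: among $p$-subsets of $[n]$ the largest possible value of $\min I$ is $n-p+1$, so the first join equals $a_{n-p+1}$ (and the top element $1$ when $p=0$), while among $(n+1-p)$-subsets of $[n+1]$ the smallest possible value of $\max J$ is $n+1-p$, so the second join equals $b_{n+1-p}$.

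Reindexing by $k=n+1-p$ then collapses the block for $1\leqslant p\leqslant n$ to the single term $a_k\wedge b_k$ with $k=1,\dots,n$, while the edge block $p=0$ contributes $1\wedge b_{n+1}=b_{n+1}$, giving
\begin{equation*}
\median(a_1,\dots,a_n,b_1,\dots,b_{n+1}) = \Big(\bigvee_{k=1}^{n}(a_k\wedge b_k)\Big)\vee b_{n+1}.
\end{equation*}
Finally I would reconcile this with the left-hand side $\bigvee_{k=1}^{n+1}(a_k\wedge b_k)$, whose last term is $a_{n+1}\wedge b_{n+1}$. This is the one spot where the remaining hypothesis is needed: $a_{n+1}\geqslant b_{n+1}$ gives $a_{n+1}\wedge b_{n+1}=b_{n+1}$, so the two expressions coincide and the identity follows. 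I expect the only delicate point to be the bookkeeping in the middle step — correctly identifying the extremal subset maximizing each inner join and separating off the edge block $p=0$ — rather than anything structurally hard; once the reindexing $k=n+1-p$ is in place, the role of the crossing condition $a_{n+1}\geqslant b_{n+1}$ becomes transparent. Note that only the distributivity of $L$ and the fact that the $a_i$ (resp.\ $b_j$) form a chain among themselves are used, so the argument does not require $L$ itself to be a chain.
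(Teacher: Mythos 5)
Your proof is correct, but it takes a genuinely different route from the paper's. The paper argues by a two-sided inequality: the direction $\bigvee_{i\in[n+1]}(a_i\wedge b_i)\leqslant\median(a_1,\ldots,a_n,b_1,\ldots,b_{n+1})$ is noted as clear, and for the converse each median term, indexed by $I\subseteq[n]$ and $J\subseteq[n+1]$ with $\card{I}+\card{J}=n+1$, is shown to be dominated by a single term $a_k\wedge b_k$, the index $k$ being picked from $[\min I,n]\cap[1,\max J]$, which is nonempty by a counting (pigeonhole) argument. You instead evaluate the median exactly: after simplifying the inner meets, you group the $(n+1)$-subsets by $p=\card{I}$, factor each block by finite distributivity, and identify it with $a_{n+1-p}\wedge b_{n+1-p}$ (and with $b_{n+1}$ for $p=0$), obtaining the unconditional identity $\median(a_1,\ldots,a_n,b_1,\ldots,b_{n+1})=\big(\bigvee_{k=1}^{n}(a_k\wedge b_k)\big)\vee b_{n+1}$; the hypothesis $a_{n+1}\geqslant b_{n+1}$ then enters only at the very end, to rewrite $b_{n+1}$ as $a_{n+1}\wedge b_{n+1}$. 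Your computation is somewhat longer but more informative: it says what the median equals with no hypothesis at all, isolates precisely the role of the crossing condition, and handles the edge case $I=\varnothing$ explicitly, which the paper's recipe for choosing $k$ treats rather tersely (for $I=\varnothing$ one must take $k=n+1$, which is exactly where the paper also needs the hypothesis). Both proofs work in an arbitrary bounded distributive lattice, so your closing remark, while accurate, applies equally to the paper's argument. One small inaccuracy: monotonicity of the two sequences is not used \emph{only} to simplify the inner meets to $a_{\min I}$ and $b_{\max J}$; you invoke it again when evaluating the outer joins, e.g.\ $\bigvee_{\card{I}=p}a_{\min I}=a_{n-p+1}$ requires that the $a_i$ increase with the index. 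This is cosmetic and does not affect correctness.
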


\begin{proof}
Clearly, we have $\bigvee_{i\in [n+1]}(a_i\wedge b_i)\leqslant \median(a_1,\ldots ,a_n, b_1, \ldots, b_{n+1})$. Hence, to complete the proof it
is enough to show that for every $I\subseteq [n]$ and every $J\subseteq [n+1]$, with $\card{I}+\card{J}=n+1$, there is $k\in [n+1]$ such that
\begin{equation}\label{medExp}
\big(\bigwedge_{i\in I}a_{i}\big )\wedge \big (\bigwedge_{j \in J}b_{j}\big )\leqslant a_k\wedge b_k.
\end{equation}
Let $i'=\bigwedge_{i\in I}i$ and $j'=\bigvee_{j\in J}j$. Since $\card{I}+\card{J}=n+1$, we have that $[i',n]\cap [1,j']\neq \varnothing$.
Clearly, any $k\in [i',n]\cap [1,j']$ fulfills (\ref{medExp}).
\end{proof}

Let $\sigma$ be a permutation on $[n]$. The \emph{standard simplex} of $L^n$ associated with $\sigma$ is the subset $L^n_\sigma\subset L^n$
defined by
$$
L^n_\sigma =\{(x_1,\ldots ,x_n)\in L^n\colon x_{\sigma (1)}\leqslant x_{\sigma (2)}\leqslant \cdots\leqslant x_{\sigma (n)}\}.
$$
For each $i\in[n]$, define $S^{\uparrow}_\sigma (i)=\{\sigma(i),\ldots ,\sigma (n)\}$ and $S^{\downarrow}_\sigma (i)=\{\sigma(1),\ldots ,\sigma
(i)\}$. As a matter of convenience, set $S^{\uparrow}_\sigma (n+1)=S^{\downarrow}_\sigma (0)=\varnothing$.

\begin{proposition}\label{SimplexDNF}
Let $L$ be a bounded chain and let $f\colon L^{n}\rightarrow L$ be a function. The following conditions are equivalent:
\begin{enumerate}
\item[(i)] $f$ is a polynomial function.

\item[(ii)] For any permutation $\sigma$ on $[n]$ and every $\vect{x}\in L^n_\sigma$, we have
\begin{eqnarray}
 f(\vect{x}) &=& \bigvee_{i\in [n+1]}\big (\alpha_f(S^{\uparrow}_\sigma (i))\wedge x_{\sigma(i)}\big )
= \bigwedge_{i\in [n+1]}\big (\alpha_f(S^{\uparrow}_\sigma (i))\vee x_{\sigma(i-1)}\big )\label{DNF1234}\\
&=& \median\big(x_1,\ldots ,x_n, \alpha_f(S^{\uparrow}_\sigma (1)),\ldots ,\alpha_f(S^{\uparrow}_\sigma (n+1))\big),\nonumber
\end{eqnarray} where $x_{\sigma(0)}=0$ and $x_{\sigma(n+1)}=1$.

\item[(iii)] For any permutation $\sigma$ on $[n]$ and every $\vect{x}\in L^n_\sigma$, we have
\begin{eqnarray*}
f(\vect{x}) &=& \bigvee_{i\in [n+1]}\big (\beta_f(S^{\downarrow}_\sigma (i-1))\wedge x_{\sigma(i)}\big ) = \bigwedge_{i\in [n+1]}\big
(\beta_f(S^{\downarrow}_\sigma (i-1))\wedge x_{\sigma(i-1)}\big )\\
&=& \median\big(x_1,\ldots ,x_n, \beta_f(S^{\downarrow}_\sigma (0)),\ldots ,\beta_f(S^{\downarrow}_\sigma (n))\big),
\end{eqnarray*} where $x_{\sigma(0)}=0$ and $x_{\sigma(n+1)}=1$.
\end{enumerate}
\end{proposition}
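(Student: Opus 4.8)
The plan is to prove the equivalence of (i), (ii), and (iii) by first reducing everything to the disjunctive normal form supplied by Proposition~\ref{DNF} together with the description of $\alpha_f$, and then invoking Lemma~\ref{SimplexMedian} to collapse the DNF expression into the median form on each standard simplex. The implication $(ii)\Rightarrow(i)$ (and likewise $(iii)\Rightarrow(i)$) is the easy direction: if $f$ agrees with the stated lattice expression on every $L^n_\sigma$, then since $L$ is a chain we have $L^n=\bigcup_\sigma L^n_\sigma$, so $f$ coincides everywhere with a function built from projections and constants using $\wedge$ and $\vee$, hence is a polynomial function. So the substance is in $(i)\Rightarrow(ii)$ and $(i)\Rightarrow(iii)$, and by the order-duality between the DNF and CNF pictures (interchanging $\wedge\leftrightarrow\vee$, $0\leftrightarrow 1$, $\alpha_f\leftrightarrow\beta_f$, and $S^{\uparrow}_\sigma\leftrightarrow S^{\downarrow}_\sigma$), it suffices to carry out $(i)\Rightarrow(ii)$ in full and remark that $(iii)$ follows dually.

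For $(i)\Rightarrow(ii)$, I would fix a permutation $\sigma$ and, without loss of generality, assume $\sigma$ is the identity so that $x_1\leqslant x_2\leqslant\cdots\leqslant x_n$; the general case is just a relabelling. Starting from the DNF representation $f(\vect{x})=\bigvee_{I\subseteq[n]}\big(\alpha_f(I)\wedge\bigwedge_{i\in I}x_i\big)$, the key observation is that on the simplex the inner meet $\bigwedge_{i\in I}x_i$ equals $x_{\min I}$. Grouping the subsets $I$ according to their minimal element, and using that $\alpha_f=f(\vect{e}_I)$ is nondecreasing so that $\bigvee_{I:\min I=j}\alpha_f(I)=\alpha_f(S^{\uparrow}(j))=\alpha_f(\{j,\ldots,n\})$ (the largest such set), the DNF collapses to $f(\vect{x})=\bigvee_{j\in[n]}\big(\alpha_f(S^{\uparrow}(j))\wedge x_j\big)$. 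Appending the term for $j=n+1$ with $S^{\uparrow}(n+1)=\varnothing$ and $x_{n+1}=1$ contributes $\alpha_f(\varnothing)=f(\vect 0)$, which is harmless as it is absorbed, giving the first equality in (\ref{DNF1234}). The middle (conjunctive) equality is obtained by the dual grouping, using that $\bigvee_{i\in I}x_i=x_{\max I}$ on the simplex.

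The final step is to identify these two expressions with the median. Here I would set $a_i=x_{\sigma(i)}$ (suitably indexed, with the convention $x_{\sigma(0)}=0$, $x_{\sigma(n+1)}=1$) and $b_i=\alpha_f(S^{\uparrow}_\sigma(i))$, and verify that the hypotheses of Lemma~\ref{SimplexMedian} hold: the $a_i$ are nondecreasing by definition of the simplex, while the $b_i$ are nonincreasing because $S^{\uparrow}_\sigma(1)\supseteq S^{\uparrow}_\sigma(2)\supseteq\cdots$ and $\alpha_f$ is isotone. The boundary condition $a_{n+1}\geqslant b_{n+1}$ holds since $a_{n+1}=1$. Lemma~\ref{SimplexMedian} then turns $\bigvee_i(a_i\wedge b_i)$ exactly into the claimed median, completing the chain of equalities. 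The main obstacle I anticipate is bookkeeping rather than conceptual: getting the index shifts consistent across the three expressions, in particular matching the $(i-1)$-shift in the conjunctive form and honouring the boundary conventions $x_{\sigma(0)}=0$, $x_{\sigma(n+1)}=1$, $S^{\uparrow}_\sigma(n+1)=\varnothing$ so that the extra $j=n+1$ and $i=0$ terms are genuinely absorbed and do not alter the value. Once the identity case is verified, invoking the permutation relabelling and the order-dual argument for (iii) finishes the proof.
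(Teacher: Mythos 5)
Your direction $(i)\Rightarrow(ii)$ follows the paper's own proof essentially step for step: group the DNF terms according to the position of their minimal element on the simplex, collapse the inner join using the isotonicity of $\alpha_f$ (which holds because polynomial functions are nondecreasing), and invoke Lemma~\ref{SimplexMedian} with $a_i=x_{\sigma(i)}$ and $b_i=\alpha_f(S^{\uparrow}_\sigma(i))$. Likewise, your reduction of $(iii)$ to $(ii)$ by duality is the paper's argument, which rests on $\beta_f(I)=\alpha_f([n]\setminus I)$ and $S^{\downarrow}_\sigma(i-1)=[n]\setminus S^{\uparrow}_\sigma(i)$. So far, so good.

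However, your treatment of $(ii)\Rightarrow(i)$ has a genuine gap. You argue that since $L^n=\bigcup_\sigma L^n_\sigma$ and $f$ agrees with ``the stated lattice expression'' on each $L^n_\sigma$, the function $f$ coincides everywhere with a combination of projections and constants. But the expression in $(ii)$ depends on $\sigma$: on different simplices, $f$ is asserted to equal \emph{different} polynomial expressions (the constants $\alpha_f(S^{\uparrow}_\sigma(i))$ are attached to different coordinates for different $\sigma$). A function that agrees, piece by piece over a cover, with various polynomial functions is not thereby a single polynomial function --- that is exactly what this implication asks you to prove, and it is where the work lies. The paper handles it by exhibiting one global candidate: it defines $f'(\vect{x})=\bigvee_{I\subseteq[n]}\big(\alpha_f(I)\wedge\bigwedge_{i\in I}x_i\big)$, a genuine polynomial function, and shows that on every simplex $L^n_\sigma$ the same grouping computation as in $(i)\Rightarrow(ii)$ reduces $f'(\vect{x})$ to $\bigvee_{i\in[n+1]}\big(\alpha_f(S^{\uparrow}_\sigma(i))\wedge x_{\sigma(i)}\big)$, which equals $f(\vect{x})$ by hypothesis; hence $f=f'$ on all of $L^n$. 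Note that the collapse step for $f'$ again needs $\alpha_f$ to be isotone, and this must now be extracted from $(ii)$ itself (for $I\subseteq I'$, the vectors $\vect{e}_I$ and $\vect{e}_{I'}$ lie in a common simplex, and the formula in $(ii)$ is nondecreasing in $\vect{x}$ there). A second, smaller slip: after grouping you write that the DNF collapses to $\bigvee_{j\in[n]}\big(\alpha_f(S^{\uparrow}(j))\wedge x_j\big)$ and that the $j=n+1$ term $\alpha_f(\varnothing)$ is ``harmless as it is absorbed.'' It is not absorbed: it is precisely the $I=\varnothing$ term of the DNF and must be kept --- for the constant function $f\equiv c$ and $\vect{x}=\vect{0}$, the join over $j\in[n]$ is $0$ while $f(\vect{0})=c$.
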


\begin{proof}
The equivalence $(ii)\Leftrightarrow (iii)$ follows immediately from the fact that $\beta_f(I)=\alpha_f([n]\setminus I) $, for every $I\subseteq
[n]$, and $S^{\downarrow}_\sigma (i-1)=[n]\setminus S^{\uparrow}_\sigma (i)$.

To show that $(i)\Rightarrow (ii)$, suppose that $f\colon L^{n}\rightarrow L$ is a polynomial function, that is,
\begin{align}\label{DNF123}
f(\vect{x})=\bigvee_{I\subseteq [n]}\big(\alpha_f(I)\wedge \bigwedge_{i\in I} x_i\big).
\end{align}
Clearly, for each permutation $\sigma$ on $[n]$ and each $\vect{x}\in L^n_\sigma$, we have that (\ref{DNF123}) becomes
$$
f(\vect{x}) = \alpha_f(\varnothing)\vee \bigvee_{i\in [n]}\bigvee_{\textstyle{I\subseteq S^{\uparrow}_\sigma (i)\atop \sigma(i)\in I}}\big
(\alpha_f(I)\wedge x_{\sigma(i)}\big) = \alpha_f(\varnothing)\vee \bigvee_{i\in [n]}\big (\alpha_f(S^{\uparrow}_\sigma (i))\wedge
x_{\sigma(i)}\big).
$$
This, together with Lemma~\ref{SimplexMedian}, shows that $(i)\Rightarrow (ii)$.

To show that $(ii)\Rightarrow (i)$ also holds, let $f'\colon L^{n}\rightarrow L$ be the polynomial function given by
\begin{align*}
f'(\vect{x})=\bigvee_{I\subseteq [n]}\big(\alpha_f(I)\wedge \bigwedge_{i\in I} x_i\big).
\end{align*}
By Corollary~\ref{Ext2}, $f'$ is the unique extension of $\alpha_f$ to a polynomial function. Let $\vect{x}\in L^n$ and let $\sigma$ be
permutation on $[n]$ such that $\vect{x}\in L^n_{\sigma}$. As in the proof of $(i)\Rightarrow (ii)$, we have
\begin{align}\label{DNF123456}
f'(\vect{x})=\bigvee_{I\subseteq [n]}\big(\alpha_f(I)\wedge \bigwedge_{i\in I} x_i\big)= \bigvee_{i\in [n+1]}\big (\alpha_f(S^{\uparrow}_\sigma
(i))\wedge x_{\sigma(i)}\big)=f(\vect{x}).
\end{align}
Since (\ref{DNF123456}) holds for any $\vect{x}\in L^n$, it follows that $f=f'$ and thus $f$ is a polynomial function.
\end{proof}

\begin{remark}
The equivalence between $(i)$ and $(ii)$ of Proposition~\ref{SimplexDNF} was already observed in \cite[{\S}5]{Marc}. Prior to this,
Propositions~\ref{prop:Uniqueness} and \ref{SimplexDNF} were already established in \cite{Mar00c} for idempotent polynomial functions
(discrete Sugeno integrals) in the case when $L$ is the unit real interval $[0,1]$; see also \cite[{\S}4.3]{Mar98}.
\end{remark}

\section{Characterizations of polynomial functions}

In this section, we propose weak analogues of the properties used in Theorem~\ref{mainChar} and provide characterizations of polynomial
functions on chains, accordingly. Moreover, we introduce further properties, namely comonotonic minitivity and maxitivity, which we then use to
provide further characterizations of polynomial functions.

For integers $0\leqslant p\leqslant q\leqslant n$, define
$$
L_n^{(p,q)}=\{\vect{x}\in L^n : |\{x_1,\ldots,x_n\}\cap\{0,1\}|\geqslant p~\mbox{and}~|\{x_1,\ldots,x_n\}|\leqslant q\}.
$$
For instance, $L_n^{(0,2)}$ is the set of Boolean vectors of $L^n$ that are two-sided trimmed by constant vectors, that is
$$
L_n^{(0,2)}=\bigcup_{\textstyle{\vect{e}\in\{0,1\}^n\atop c,d\in L}}\{\median(c,\vect{e},d)\}.
$$

\subsection{Weak homogeneity}

Let $S$ be a nonempty subset of $L$. We say that a function $f\colon L^{n}\rightarrow L$ is \emph{weakly $S$-min homogeneous} (resp.\
\emph{weakly $S$-max homogeneous}) if (\ref{minHom}) (resp.\ (\ref{maxHom})) holds for every $\vect{x}\in L_n^{(0,2)}$ and every $c\in S$.

For every integer $m\geqslant 1$, every $\vect{x}\in L^m$, and every $f\colon L^{n}\rightarrow L$, we define $\langle\vect{x}\rangle_f\in L^m$
as the $m$-tuple
$$
\langle\vect{x}\rangle_f=\median(f(\vect{0}),\vect{x},f(\vect{1})),
$$
where the right-hand side median is taken componentwise. As observed in \cite{CouMar1}, for every nonempty subset $S\subseteq L$, we have that
$f$ is $S$-min homogeneous and $S$-max homogeneous if and only if it satisfies
\begin{equation}\label{Min-Max-Hom-Med}
f(\median({r},\vect{x},{s}))= \median(r,f(\vect{x}),s)
\end{equation}
for every $\vect{x}\in L^n$ and every $r,s\in S$. In particular, if $f(\vect{0}),f(\vect{1})\in S$ then, for any $\vect{x}\in L^n$ such that
$f(\vect{0})\leqslant f(\vect{x})\leqslant f(\vect{1})$, we have $f(\vect{x}) = f(\langle\vect{x}\rangle_f)$.

It was also shown in \cite{CouMar1} that, for every nonempty subset $S\subseteq L$, if $f$ is $S$-min homogeneous and $S$-max homogeneous, then
it is $S$-idempotent. The following lemma shows that the weak analogue also holds.

\begin{lemma}\label{lemma:WeaklyMinMaxIdem}
Let $S$ be a nonempty subset of $L$. If $f\colon L^{n}\rightarrow L$ is weakly $S$-min homogeneous and weakly $S$-max homogeneous, then it is
$S$-idempotent. Moreover, if $f(\vect{0}),f(\vect{1})\in S$ then, for any $\vect{x}\in L_n^{(0,2)}$ such that $f(\vect{0})\leqslant
f(\vect{x})\leqslant f(\vect{1})$, we have $f(\vect{x}) = f(\langle\vect{x}\rangle_f)$.
\end{lemma}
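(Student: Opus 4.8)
The plan is to treat the two assertions separately, in each case exploiting the fact that the extremal Boolean vectors $\vect{0}$ and $\vect{1}$, being constant, already belong to $L_n^{(0,2)}$, so that the weak hypotheses can be invoked there. This observation alone handles $S$-idempotency. Fix $c\in S$. First I would apply weak $S$-min homogeneity (\ref{minHom}) to the vector $\vect{1}\in L_n^{(0,2)}$: since $\vect{1}\wedge c=(c,\ldots,c)$, this yields $f(c,\ldots,c)=f(\vect{1})\wedge c\leqslant c$. Dually, applying weak $S$-max homogeneity (\ref{maxHom}) to $\vect{0}\in L_n^{(0,2)}$ and using $\vect{0}\vee c=(c,\ldots,c)$ gives $f(c,\ldots,c)=f(\vect{0})\vee c\geqslant c$. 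Combining the two inequalities yields $f(c,\ldots,c)=c$. Note that this argument invokes weak homogeneity only at $\vect{0}$ and $\vect{1}$, so it transfers verbatim from the full-homogeneity case.

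For the second assertion, write $r=f(\vect{0})$ and $s=f(\vect{1})$, both of which lie in $S$ by hypothesis; from $f(\vect{0})\leqslant f(\vect{x})\leqslant f(\vect{1})$ we also get $r\leqslant s$. Since $r\leqslant s$, the componentwise median satisfies $\langle\vect{x}\rangle_f=\median(r,\vect{x},s)=(\vect{x}\vee r)\wedge s$, and likewise $\median(r,f(\vect{x}),s)=(f(\vect{x})\vee r)\wedge s$. The goal is thus to establish the single instance $f(\median(r,\vect{x},s))=\median(r,f(\vect{x}),s)$ of (\ref{Min-Max-Hom-Med}) for our particular $\vect{x}$. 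I would do this in two steps: apply weak $S$-min homogeneity with constant $s$ to the vector $\vect{x}\vee r$ to get $f((\vect{x}\vee r)\wedge s)=f(\vect{x}\vee r)\wedge s$, and then apply weak $S$-max homogeneity with constant $r$ to $\vect{x}$ to get $f(\vect{x}\vee r)=f(\vect{x})\vee r$. Chaining these gives $f(\langle\vect{x}\rangle_f)=(f(\vect{x})\vee r)\wedge s=\median(r,f(\vect{x}),s)$. Finally, since $f(\vect{0})\leqslant f(\vect{x})\leqslant f(\vect{1})$, the right-hand median collapses to $f(\vect{x})$, which is exactly the claim.

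The only point requiring care—and the crux of why the weak hypotheses suffice—is verifying that each vector to which homogeneity is applied indeed belongs to $L_n^{(0,2)}$. Here $\vect{x}$ has at most two distinct entries, say $a\leqslant b$; consequently $\vect{x}\vee r$ has entries among $\{a\vee r,\, b\vee r\}$ and so again belongs to $L_n^{(0,2)}$, which legitimizes the min-homogeneity step. In other words, $L_n^{(0,2)}$ is closed under the operations $\vect{y}\mapsto\vect{y}\vee r$ and $\vect{y}\mapsto\vect{y}\wedge s$, so the entire computation remains inside the domain on which weak homogeneity is assumed. This closure is precisely the structural feature that allows the full-homogeneity derivation of (\ref{Min-Max-Hom-Med}) to be localized to two-valued vectors, and I expect it to be the main thing to check rather than a genuine obstacle.
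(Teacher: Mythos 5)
Your proof is correct and follows essentially the same route as the paper: weak homogeneity applied at constant/extremal vectors (which lie in $L_n^{(0,2)}$) gives $S$-idempotency, and the second claim is the restriction of formula (\ref{Min-Max-Hom-Med}) to $L_n^{(0,2)}$, which is exactly what the paper invokes. You merely make explicit what the paper leaves implicit, namely that $L_n^{(0,2)}$ is closed under $\vect{y}\mapsto\vect{y}\vee r$ and $\vect{y}\mapsto\vect{y}\wedge s$, so the two-step homogeneity chain stays inside the domain where the weak hypotheses apply.
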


\begin{proof}
If $f\colon L^{n}\rightarrow L$ is weakly $S$-min homogeneous and weakly $S$-max homogeneous then, for any $c\in S$, we have $f(\vect{1}\wedge
c)\wedge c=f(\vect{1}\wedge c) = f(\vect{1}\wedge c)\vee c$, and thus $f$ is $S$-idempotent. The second statement follows from formula
(\ref{Min-Max-Hom-Med}) when restricted to vectors $\vect{x}\in L_n^{(0,2)}$.
\end{proof}

As we are going to see, of particular interest is when $S=\co{\mathcal{R}}_f$, for which we have the following result (see \cite{CouMar1}).

\begin{proposition}\label{prop:Hom-Id-46}
For any function $f\colon L^{n}\rightarrow L$ the following hold:
\begin{enumerate}
\item[(i)] If $f$ is $\co{\mathcal{R}}_f$-idempotent, then $f$ has a convex range (i.e., $\co{\mathcal{R}}_f=\mathcal{R}_f$).

\item[(ii)] If $f$ is a polynomial function, then it is $\co{\mathcal{R}}_f$-min homogeneous, $\co{\mathcal{R}}_f$-max homogeneous,
$\co{\mathcal{R}}_f$-idempotent, and has a convex range.

\item[(iii)] The function $f$ is $\co{\mathcal{R}}_f$-min homogeneous (resp.\ $\co{\mathcal{R}}_f$-max homogeneous) if and only if it is
$\mathcal{R}_f$-min homogeneous (resp.\ $\mathcal{R}_f$-max homogeneous) and has a convex range. In this case,
$\mathcal{R}_f=\co{\mathcal{R}}_f=[f(\vect{0}),f(\vect{1})]$.
\end{enumerate}
\end{proposition}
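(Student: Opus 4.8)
The plan is to prove the three parts in sequence, using the equivalent median-form characterization~(\ref{Min-Max-Hom-Med}) of simultaneous min- and max-homogeneity as the central tool. Throughout I write $r_0=f(\vect{0})$ and $r_1=f(\vect{1})$, and I record the elementary observation that for a function which is $\co{\mathcal{R}}_f$-min homogeneous and $\co{\mathcal{R}}_f$-max homogeneous, Lemma~\ref{lemma:WeaklyMinMaxIdem} (in its non-weak form, already established in \cite{CouMar1}) gives $\co{\mathcal{R}}_f$-idempotency for free.

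For part~(i), I would show $\mathcal{R}_f$ is convex directly. Take any value $v\in\co{\mathcal{R}}_f=[r_0',r_1']$ lying between two attained values $f(\vect{a})\leqslant v\leqslant f(\vect{b})$; since $L$ is a chain, $v\in\co{\mathcal{R}}_f$ and so $\co{\mathcal{R}}_f$-idempotency yields $f(v,\ldots,v)=v$, exhibiting $v$ as an attained value. Hence $\mathcal{R}_f=\co{\mathcal{R}}_f$.

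For part~(ii), I would invoke Proposition~\ref{DNF}: a polynomial function has a DNF representation $f(\vect{x})=\bigvee_{I}(\alpha(I)\wedge\bigwedge_{i\in I}x_i)$. Min-homogeneity over $\co{\mathcal{R}}_f$ then follows by distributing $c$ across the join and meet (using that $c\wedge(\alpha(I)\wedge\bigwedge_{i\in I}x_i)$ reassembles correctly, and that constants $\alpha(\varnothing)$ are capped by $r_1\in\co{\mathcal{R}}_f$ so that $\alpha(\varnothing)\wedge c$ behaves); max-homogeneity follows dually from the CNF. Idempotency of polynomial functions and the convex-range conclusion then follow from part~(i) together with $\co{\mathcal{R}}_f$-idempotency. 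The verification that the homogeneity identities survive the distribution for every $c\in\co{\mathcal{R}}_f$ (rather than merely $c\in L$) is the step requiring care, since one must confirm the constant term of the DNF does not escape the range interval.

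For part~(iii), the forward direction is immediate: $\co{\mathcal{R}}_f\supseteq\mathcal{R}_f$, so $\co{\mathcal{R}}_f$-homogeneity restricts to $\mathcal{R}_f$-homogeneity, and convexity of the range comes from part~(i) via the idempotency noted above. The converse is where the main work lies, and I expect it to be the principal obstacle. Assuming $\mathcal{R}_f$-min homogeneity and $\mathcal{R}_f=\co{\mathcal{R}}_f$ (convex range), I must upgrade homogeneity from the attained set $\mathcal{R}_f$ to the full interval $\co{\mathcal{R}}_f=[r_0,r_1]$. But with a convex range these two sets coincide, so $\mathcal{R}_f=\co{\mathcal{R}}_f=[f(\vect{0}),f(\vect{1})]$ and the upgrade is a tautology once the final equality is established. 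The remaining point is to verify $\mathcal{R}_f=[f(\vect{0}),f(\vect{1})]$ rather than merely a convex subset: min-homogeneity gives $f(\vect{x})\wedge c=f(\vect{x}\wedge c)$ for $c\in\mathcal{R}_f$, and taking $\vect{x}=\vect{1}$ with $c$ ranging over attained values, together with nondecreasingness implicitly forcing $f(\vect{0})$ and $f(\vect{1})$ to be the extremal attained values, pins down the endpoints. The delicate part is checking that min-homogeneity alone (without assuming monotonicity in this part of the statement) already forces $f(\vect{0})\leqslant f(\vect{x})\leqslant f(\vect{1})$ for all $\vect{x}$, so that $[f(\vect{0}),f(\vect{1})]$ is exactly $\co{\mathcal{R}}_f$.
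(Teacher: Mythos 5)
The paper itself contains no proof of this proposition --- it is quoted from the companion paper \cite{CouMar1} --- so your attempt can only be measured against what a complete argument requires. Parts (i) and (ii) of your plan are essentially sound. For (i), all that is needed is that in any lattice the convex hull of a set consists exactly of the elements lying between two members of the set, so $\co{\mathcal{R}}_f$-idempotency puts every element of $\co{\mathcal{R}}_f$ into $\mathcal{R}_f$; your appeal to the chain structure is superfluous (the proposition is stated for an arbitrary bounded distributive lattice, and the argument never needs chains). For (ii), the DNF computation works, and you correctly flag the delicate point, namely the constant term: in \emph{every} DNF representation one has $\alpha(\varnothing)=f(\vect{0})$, and since polynomial functions are nondecreasing, every $c\in\co{\mathcal{R}}_f$ satisfies $c\geqslant f(\vect{0})$, so $\alpha(\varnothing)\wedge c=\alpha(\varnothing)$ and the two sides of (\ref{minHom}) agree. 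Note that the bound needed is $\alpha(\varnothing)\leqslant c$, not your ``capped by $r_1$''; max-homogeneity is dual via the CNF, idempotency then follows from the cited fact that $S$-min and $S$-max homogeneity jointly imply $S$-idempotency, and convexity follows from (i).

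The genuine gap is in part (iii), and your route there cannot be patched. Claim (iii) concerns \emph{each} homogeneity property separately (the ``resp.''\ structure), but your forward direction obtains range convexity ``via the idempotency noted above'', and that idempotency was derived from min- \emph{and} max-homogeneity jointly. One-sided homogeneity implies neither $\co{\mathcal{R}}_f$-idempotency nor, as your sketch of the last sentence requires, the inequalities $f(\vect{0})\leqslant f(\vect{x})\leqslant f(\vect{1})$. Concretely, let $L=\{0,a,1\}$ be the three-element chain and define $f\colon L^2\to L$ by $f(x_1,x_2)=x_2$ except $f(1,1)=a$. Checking $c\in\{0,a,1\}$ directly shows that $f$ is $L$-min homogeneous, and its range is all of $L$, hence convex; yet $f(1,1)=a\neq 1$, so $f$ is not $\co{\mathcal{R}}_f$-idempotent, $f(\vect{1})$ is not the greatest element of the range, and $\co{\mathcal{R}}_f\neq[f(\vect{0}),f(\vect{1})]$. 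The correct argument for convexity in (iii) bypasses idempotency entirely: given $c\in\co{\mathcal{R}}_f$, choose $\vect{b}$ with $c\leqslant f(\vect{b})$; then $f(\vect{b}\wedge c)=f(\vect{b})\wedge c=c$, so $c\in\mathcal{R}_f$ (dually, $f(\vect{a}\vee c)=c$ in the max case). As for the final sentence, min-homogeneity does pin down the bottom endpoint, since $f(\vect{0})=f(\vect{0}\wedge c)=f(\vect{0})\wedge c\leqslant c$ for every $c\in\mathcal{R}_f$, but the top endpoint genuinely requires the max-homogeneity half (dually); the example above shows this is not a removable difficulty. Hence the equality $\mathcal{R}_f=\co{\mathcal{R}}_f=[f(\vect{0}),f(\vect{1})]$ can only be asserted when both homogeneity properties (or monotonicity) are present --- which is how the paper actually uses it --- and your plan to force it from min-homogeneity alone, explicitly ``without assuming monotonicity'', is an attempt to prove something false.
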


Let $f\colon L^n\to L$ be a nondecreasing function so that $\co{\mathcal{R}}_f=[f(\vect{0}),f(\vect{1})]$. Now, if $f$ is weakly
$\co{\mathcal{R}}_f$-min homogeneous, then for any $c\in \co{\mathcal{R}}_f$, we have $f(\vect{1}\wedge c)=f(\vect{1})\wedge c=c$, and thus $f$
is $\co{\mathcal{R}}_f$-idempotent. Dually, if $f$ is weakly $\co{\mathcal{R}}_f$-max homogeneous, then it is also
$\co{\mathcal{R}}_f$-idempotent. Hence we have the following result.

\begin{lemma}\label{lemma:WeaklyMinMaxRangeIdem}
Let $f\colon L^{n}\rightarrow L$ be nondecreasing. If $f$ is weakly $\co{\mathcal{R}}_f$-min homogeneous or weakly $\co{\mathcal{R}}_f$-max
homogeneous, then it is $\co{\mathcal{R}}_f$-idempotent.
\end{lemma}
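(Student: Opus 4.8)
The plan is to derive $\co{\mathcal{R}}_f$-idempotency directly, by evaluating the relevant homogeneity identity at a single well-chosen constant tuple. The enabling observation is twofold: since $f$ is nondecreasing we have $\co{\mathcal{R}}_f=[f(\vect{0}),f(\vect{1})]$, and both $\vect{0}$ and $\vect{1}$ belong to $L_n^{(0,2)}$ (each takes a single value, hence at most two distinct values), so the \emph{weak} homogeneity hypotheses already apply to these tuples. This is precisely what lets us avoid needing full homogeneity.

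In the weakly $\co{\mathcal{R}}_f$-min homogeneous case, I would fix $c\in\co{\mathcal{R}}_f$ and apply (\ref{minHom}) with $\vect{x}=\vect{1}$, obtaining $f(\vect{1}\wedge c)=f(\vect{1})\wedge c$. Since $\vect{1}\wedge c=(c,\ldots,c)$, the left-hand side is $f(c,\ldots,c)$; and since $c\leqslant f(\vect{1})$, the right-hand side reduces to $c$. This yields $f(c,\ldots,c)=c$ for every $c\in\co{\mathcal{R}}_f$, which is exactly $\co{\mathcal{R}}_f$-idempotency. The weakly $\co{\mathcal{R}}_f$-max homogeneous case is handled dually: applying (\ref{maxHom}) with $\vect{x}=\vect{0}$ gives $f(\vect{0}\vee c)=f(\vect{0})\vee c$, where $\vect{0}\vee c=(c,\ldots,c)$ turns the left side into $f(c,\ldots,c)$, while $c\geqslant f(\vect{0})$ collapses the right side to $c$, again delivering $f(c,\ldots,c)=c$.

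I do not expect a genuine obstacle here. The only two points deserving care are checking that $\vect{0},\vect{1}\in L_n^{(0,2)}$, so that weak rather than full homogeneity suffices, and invoking $\co{\mathcal{R}}_f=[f(\vect{0}),f(\vect{1})]$ to secure the inequalities $c\leqslant f(\vect{1})$ and $f(\vect{0})\leqslant c$ that make the meet (resp.\ join) with $c$ collapse. This mirrors the computation already outlined in the paragraph preceding the statement, and the disjunctive hypothesis (min \emph{or} max homogeneity) is accommodated simply by running whichever of the two evaluations the hypothesis permits.
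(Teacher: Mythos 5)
Your proof is correct and coincides with the paper's own argument (given in the paragraph immediately preceding the lemma): both evaluate the weak min-homogeneity identity at $\vect{x}=\vect{1}$ (resp.\ weak max-homogeneity at $\vect{x}=\vect{0}$), using $\co{\mathcal{R}}_f=[f(\vect{0}),f(\vect{1})]$ to collapse $f(\vect{1})\wedge c$ to $c$ (resp.\ $f(\vect{0})\vee c$ to $c$). Your explicit check that $\vect{0},\vect{1}\in L_n^{(0,2)}$ is a point the paper leaves implicit, but the route is the same.
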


We now provide our first characterization of polynomial functions which shows that, in the case of chains, the conditions in $(iv)$ of
Theorem~\ref{mainChar} can be replaced with their weak analogues.

\begin{theorem}\label{theorem:WLP-WeakHom}
Let $L$ be a bounded chain. A function $f\colon L^{n}\rightarrow L$ is a polynomial function if and only if it is nondecreasing, weakly
$\co{\mathcal{R}}_f$-min homogeneous, and weakly $\co{\mathcal{R}}_f$-max homogeneous.
\end{theorem}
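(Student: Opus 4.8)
The necessity direction is immediate: if $f$ is a polynomial function, then by Proposition~\ref{prop:Hom-Id-46}(ii) it is $\co{\mathcal{R}}_f$-min homogeneous and $\co{\mathcal{R}}_f$-max homogeneous, hence certainly weakly so (since $L_n^{(0,2)}\subseteq L^n$); and of course polynomial functions are nondecreasing. So the entire content lies in the sufficiency direction, and the plan is to verify that $f$ satisfies condition $(iv)$ of Theorem~\ref{mainChar}, that is, to upgrade weak $\co{\mathcal{R}}_f$-min and weak $\co{\mathcal{R}}_f$-max homogeneity to their full (non-weak) counterparts. Once that upgrade is achieved, Theorem~\ref{mainChar} delivers the conclusion at no further cost.

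First I would record the setup already assembled in the excerpt. Since $f$ is nondecreasing, $\co{\mathcal{R}}_f=[f(\vect{0}),f(\vect{1})]$, and by Lemma~\ref{lemma:WeaklyMinMaxRangeIdem} the weak homogeneity hypotheses give that $f$ is $\co{\mathcal{R}}_f$-idempotent. By Proposition~\ref{prop:Hom-Id-46}(i), $f$ then has a convex range, so $\mathcal{R}_f=\co{\mathcal{R}}_f=[f(\vect{0}),f(\vect{1})]$. The strategy now is to prove full min homogeneity, i.e.\ $f(\vect{x}\wedge c)=f(\vect{x})\wedge c$ for \emph{every} $\vect{x}\in L^n$ and every $c\in\co{\mathcal{R}}_f$, using only the hypothesis on the restricted set $L_n^{(0,2)}$; the max case is dual and I would dispatch it by the same argument applied to the order-dual chain.

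The key step — and I expect it to be the main obstacle — is the passage from an arbitrary $\vect{x}\in L^n$ to a Boolean-type vector in $L_n^{(0,2)}$ on which the hypothesis is available. The natural device is the standard-simplex representation of Proposition~\ref{SimplexDNF}: choosing a permutation $\sigma$ with $\vect{x}\in L^n_\sigma$, I would try to reduce the evaluation of $f(\vect{x}\wedge c)$ and of $f(\vect{x})\wedge c$ to evaluations of $f$ at vectors of the form $\median(r,\vect{e}_I,s)$, which lie in $L_n^{(0,2)}$ by the defining union formula for $L_n^{(0,2)}$. Concretely, since the hypotheses force $f$ to agree with its polynomial extension on all such trimmed Boolean vectors, I would argue that the median/simplex formula reconstructs $f$ on all of $L^n$ from its values on $L_n^{(0,2)}$, exactly as in the proof of $(ii)\Rightarrow(i)$ in Proposition~\ref{SimplexDNF}. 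The delicate point is to ensure that every intermediate term produced when simplifying $f(\vect{x}\wedge c)$ genuinely lies in $L_n^{(0,2)}$ (at most two distinct non-Boolean values, cut by constants), so that weak homogeneity applies at each stage.

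Assuming that reduction goes through, the remaining bookkeeping is routine: I would invoke Lemma~\ref{lemma:WeaklyMinMaxIdem} together with formula~(\ref{Min-Max-Hom-Med}) (valid, after the reduction, on $L_n^{(0,2)}$) to establish that $f(\vect{x})=f(\langle\vect{x}\rangle_f)$ for $\vect{x}\in L_n^{(0,2)}$, and then propagate the homogeneity identities across the simplex decomposition to all of $L^n$. Having thus promoted the weak conditions to full $\co{\mathcal{R}}_f$-min and $\co{\mathcal{R}}_f$-max homogeneity, I would close by citing the equivalence $(i)\Leftrightarrow(iv)$ of Theorem~\ref{mainChar} to conclude that $f$ is a polynomial function.
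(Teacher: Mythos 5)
Your necessity direction and your preliminary reductions are fine and agree with the paper: Proposition~\ref{prop:Hom-Id-46}(ii) gives necessity, and on the sufficiency side nondecreasing monotonicity gives $\co{\mathcal{R}}_f=[f(\vect{0}),f(\vect{1})]$, Lemma~\ref{lemma:WeaklyMinMaxRangeIdem} gives $\co{\mathcal{R}}_f$-idempotency, and Proposition~\ref{prop:Hom-Id-46}(i) gives range convexity. The gap is exactly at the step you yourself call ``the main obstacle.'' You propose to pass from arbitrary $\vect{x}\in L^n$ to vectors in $L_n^{(0,2)}$ by invoking the simplex/median formula of Proposition~\ref{SimplexDNF} to ``reconstruct $f$ on all of $L^n$ from its values on $L_n^{(0,2)}$.'' But that proposition cannot be applied here: its representation formula is \emph{equivalent} to $f$ being a polynomial function, so it is only available once polynomiality is known. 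The implication $(ii)\Rightarrow(i)$ you cite \emph{assumes} that $f$ satisfies the simplex formula and concludes that $f$ is polynomial; for your $f$, whether any such reconstruction identity holds is precisely the point at issue. Nothing in the hypotheses (monotonicity plus identities restricted to $L_n^{(0,2)}$) yields, a priori, an equality expressing $f(\vect{x})$ at a general point in terms of values on $L_n^{(0,2)}$ --- monotonicity alone produces only inequalities. So the plan is circular at its central step, and ``assuming that reduction goes through'' amounts to assuming essentially the whole theorem. (The ``delicate point'' you flag, that intermediate vectors stay in $L_n^{(0,2)}$, is not the real difficulty.)

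What must be supplied --- and what the paper's proof consists of --- is a direct sandwich argument establishing the reconstruction
$$
f(\vect{x})=\bigvee_{I\subseteq[n]}\Big(f(\vect{e}_I)\wedge\big\langle\bigwedge_{i\in I}x_i\big\rangle_f\Big),
$$
whose right-hand side is visibly a polynomial function, so that no appeal to Theorem~\ref{mainChar}(iv) is even needed. The lower bound $f(\vect{x})\geqslant f(\vect{e}_I)\wedge\langle\bigwedge_{i\in I}x_i\rangle_f$ follows for each $I$ from monotonicity, Lemma~\ref{lemma:WeaklyMinMaxIdem}, and weak min homogeneity applied to $\vect{e}_I\wedge(\bigwedge_{i\in I}x_i)\in L_n^{(0,2)}$. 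The upper bound is the genuinely combinatorial, chain-specific part: pick $I^*$ maximizing $f(\vect{e}_{I^*})\wedge\langle\bigwedge_{i\in I^*}x_i\rangle_f$, show the set $J$ of coordinates $j$ with $x_j\leqslant f(\vect{e}_{I^*})\wedge\langle\bigwedge_{i\in I^*}x_i\rangle_f$ is nonempty (otherwise $I=[n]$ would beat $I^*$), then apply monotonicity and weak max homogeneity to the vector $\big(f(\vect{e}_{I^*})\wedge\langle\bigwedge_{i\in I^*}x_i\rangle_f\big)\vee\vect{e}_{[n]\setminus J}\in L_n^{(0,2)}$, and use maximality of $I^*$ once more to absorb the term $f(\vect{e}_{[n]\setminus J})$. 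Your proposal contains no substitute for this argument, and without it (or an equivalent) the promotion from weak to full homogeneity remains unproved, so the final citation of Theorem~\ref{mainChar}(iv) has nothing to rest on.
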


\begin{proof}
From Proposition~\ref{prop:Hom-Id-46} $(ii)$ it follows that each condition is necessary. To show that they are also sufficient, let
$\vect{x}\in L^n$. By nondecreasing monotonicity, Lemma~\ref{lemma:WeaklyMinMaxIdem}, and weak $\co{\mathcal{R}}_f$-min homogeneity, for every
$I\subseteq [n]$ we have
\begin{eqnarray*}
f(\vect{x}) & \geqslant & f\big(\vect{e}_I \wedge (\bigwedge_{i\in I}x_i)\big) = f\big(\big\langle\vect{e}_I \wedge (\bigwedge_{i\in
I}x_i)\big\rangle_f\big) =  f(\langle\vect{e}_I\rangle_f) \wedge \big\langle\bigwedge_{i\in I}x_i\big\rangle_f\\
&=& f(\vect{e}_I) \wedge \big\langle\bigwedge_{i\in I}x_i\big\rangle_f
\end{eqnarray*}
and thus $f(\vect{x})\geqslant \bigvee_{I\in [n]} (f(\vect{e}_I) \wedge \langle\bigwedge_{i\in I}x_i\rangle_f)$. To complete the proof, it is
enough to establish the converse inequality. Let $I^*\subseteq [n]$ be such that $f(\vect{e}_{I^*}) \wedge\langle \bigwedge_{i\in
I^*}x_i\rangle_f$ is maximum. Define
$$
J=\big\{j\in [n]\colon x_j\leqslant f(\vect{e}_{I^*}) \wedge\big\langle \bigwedge_{i\in I^*}x_i\big\rangle_f\big\}.
$$
We claim that $J\neq\varnothing$. For the sake of contradiction, suppose that $x_j> f(\vect{e}_{I^*}) \wedge\langle\bigwedge_{i\in
I^*}x_i\rangle_f$ for every $j\in [n]$. Then, by nondecreasing monotonicity, we have $f(\vect{e}_{[n]})\geqslant f(\vect{e}_{I^*})$, and since
$f(\vect{e}_{[n]})= f(\vect{1})\geqslant\langle\bigwedge_{i\in [n]}x_i\rangle_f$,
$$
f(\vect{e}_{[n]}) \wedge \big\langle\bigwedge_{i\in [n]}x_i\big\rangle_f > f(\vect{e}_{I^*}) \wedge\big\langle\bigwedge_{i\in
I^*}x_i\big\rangle_f,
$$
which contradicts the definition of $I^*$. Thus $J\neq \varnothing$.

Now, by nondecreasing monotonicity and weak $\co{\mathcal{R}}_f$-max homogeneity, we have
$$
f(\vect{x}) \leqslant f\big (\big(f(\vect{e}_{I^*}) \wedge \big\langle\bigwedge_{i\in I^*}x_i\big\rangle_f\big)\vee \vect{e}_{[n]\setminus J}
\big) = \big(f(\vect{e}_{I^*}) \wedge\big\langle\bigwedge_{i\in I^*}x_i\big\rangle_f\big)\vee f(\vect{e}_{[n]\setminus J}).
$$
We claim that $f(\vect{e}_{[n]\setminus J})\leqslant f(\vect{e}_{I^*}) \wedge \langle\bigwedge_{i\in I^*}x_i\rangle_f$. Indeed, otherwise by
definition of $J$ we would have
$$
f(\vect{e}_{[n]\setminus J})\wedge \big\langle\bigwedge_{i\in [n]\setminus J}x_i\big\rangle_f > f(\vect{e}_{I^*}) \wedge
\big\langle\bigwedge_{i\in I^*}x_i\big\rangle_f,
$$
again contradicting the definition of $I^*$. Finally,
$$
f(\vect{x}) \leqslant f(\vect{e}_{I^*}) \wedge \big\langle\bigwedge_{i\in I^*}x_i\big\rangle_f = \bigvee_{I\in [n]} \big(f(\vect{e}_I) \wedge
\big\langle \bigwedge_{i\in I}x_i\big\rangle_f\big).\qedhere
$$
\end{proof}

\begin{remark}\label{HomogNotLattice}
\begin{enumerate}
\item[(i)] Note that Theorem~\ref{theorem:WLP-WeakHom} does not generally hold in the case of bounded distributive lattices. To see this, let
$L=\{0,a,b,1\}$ where $a\wedge b=0$ and $a\vee b=1$, and consider the binary function $f\colon L^2\rightarrow L$ defined by
\[
f(x_1,x_2) =
\begin{cases}
1, & \text{if $x_1=1$ or $x_2=1$,} \\
1, & \text{if $x_1=x_2=b$,}\\
a, & \text{if ($x_1=a$ and $x_2\neq 1$) or ($x_2=a$ and $x_1\neq 1$),}\\
0, & \text{otherwise.}
\end{cases}
\]
It is easy to verify that $f$ is nondecreasing and both weakly $\co{\mathcal{R}}_f$-min homogeneous and weakly $\co{\mathcal{R}}_f$-max
homogeneous. However, it is easy to see that $f$ is not a polynomial function.

\item[(ii)] The proof technique of Theorem~\ref{theorem:WLP-WeakHom} was already used in \cite{Mar00c} to prove a similar result
for idempotent polynomial functions (discrete Sugeno integrals) in the case when $L$ is the unit real interval $[0,1]$.
\end{enumerate}
\end{remark}

\subsection{Weak horizontal minitivity and maxitivity}

Let $S$ be a nonempty subset of $L$. We say that a function $f\colon L^{n}\rightarrow L$ is \emph{weakly horizontally $S$-minitive} (resp.\
\emph{weakly horizontally $S$-maxitive}) if (\ref{HorMin}) (resp.\ (\ref{HorMax})) holds for every $\vect{x}\in L_n^{(0,2)}$ and every $c\in S$.

\begin{lemma}\label{lemma:Weak15682}
Let $S$ be a nonempty subset of a bounded chain $L$. If $f\colon L^{n}\rightarrow L$ is nondecreasing, $S$-idempotent, and weakly horizontally $S$-minitive
(resp.\ weakly horizontally $S$-maxitive) then it is weakly $S$-min homogeneous (resp.\ weakly $S$-max homogeneous).
\end{lemma}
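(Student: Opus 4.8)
The plan is to prove the first (min-homogeneous) implication; the parenthetical maxitive version then follows verbatim by order-duality, interchanging $\wedge$ with $\vee$, $\leqslant$ with $\geqslant$, and $0$ with $1$, since each hypothesis and the conclusion dualize to their counterparts and $L_n^{(0,2)}$ is self-dual. So I fix $\vect{x}\in L_n^{(0,2)}$ and $c\in S$ and aim at the single identity $f(\vect{x}\wedge c)=f(\vect{x})\wedge c$. The whole argument will amount to applying weak horizontal $S$-minitivity twice, once at $\vect{x}$ and once at the trimmed vector $\vect{x}\wedge c$, and then reconciling the two decompositions. First I would record that this is legitimate: since $\vect{x}$ takes at most two distinct values, so does $\vect{x}\wedge c$, whence $\vect{x}\wedge c\in L_n^{(0,2)}$ and (\ref{HorMin}) may indeed be invoked there.

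The key step is to feed $\vect{x}\wedge c$ into (\ref{HorMin}), for which I would isolate two small identities. First, $(\vect{x}\wedge c)\vee c=(c,\dots,c)$, as each component $(x_i\wedge c)\vee c$ equals $c$. Second, $[\vect{x}\wedge c]^c=[\vect{x}]^c$: a component is raised to $1$ exactly when $x_i\wedge c\geqslant c$, i.e.\ when $x_i\geqslant c$, which is the rule defining $[\vect{x}]^c$, while on every other component the chain hypothesis forces $x_i<c$, so $x_i\wedge c=x_i$ and nothing changes. Combining these with $S$-idempotency, which turns $f(c,\dots,c)$ into $c$, reduces weak horizontal $S$-minitivity at $\vect{x}\wedge c$ to
\begin{equation*}
f(\vect{x}\wedge c)=f\big((\vect{x}\wedge c)\vee c\big)\wedge f\big([\vect{x}\wedge c]^c\big)=c\wedge f([\vect{x}]^c).
\end{equation*}

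It then remains to relate this to $f(\vect{x})$. Weak horizontal $S$-minitivity at $\vect{x}$ itself reads $f(\vect{x})=f(\vect{x}\vee c)\wedge f([\vect{x}]^c)$; meeting both sides with $c$ and substituting the displayed identity $c\wedge f([\vect{x}]^c)=f(\vect{x}\wedge c)$ gives $f(\vect{x})\wedge c=f(\vect{x}\vee c)\wedge f(\vect{x}\wedge c)$. Since $\vect{x}\wedge c\leqslant\vect{x}\vee c$, nondecreasing monotonicity yields $f(\vect{x}\wedge c)\leqslant f(\vect{x}\vee c)$, so the right-hand side collapses to $f(\vect{x}\wedge c)$ and we conclude $f(\vect{x})\wedge c=f(\vect{x}\wedge c)$, i.e.\ weak $S$-min homogeneity. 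I do not anticipate a genuine obstacle; the only points demanding care are the bookkeeping that keeps each auxiliary vector inside $L_n^{(0,2)}$ (so that the weak hypotheses, rather than their full versions, suffice) and the totality of the order in the identity $[\vect{x}\wedge c]^c=[\vect{x}]^c$, which is where the chain assumption genuinely enters.
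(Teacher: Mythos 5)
Your proof is correct and follows essentially the same route as the paper's: both hinge on applying weak horizontal $S$-minitivity at the trimmed vector $\vect{x}\wedge c\in L_n^{(0,2)}$, simplifying via the identities $(\vect{x}\wedge c)\vee c=(c,\ldots,c)$ and $[\vect{x}\wedge c]^c=[\vect{x}]^c$ (the latter being where the chain hypothesis enters) and $S$-idempotency. The only difference is cosmetic, in the closing step: where you invoke minitivity a second time at $\vect{x}$ itself and use $f(\vect{x}\wedge c)\leqslant f(\vect{x}\vee c)$, the paper instead closes a cyclic chain of inequalities, squeezing $f(\vect{x}\wedge c)$ between $f(\vect{x})\wedge c$ from above (monotonicity plus idempotency) and $c\wedge f(\vect{x})$ from below (via $f([\vect{x}]^c)\geqslant f(\vect{x})$).
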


\begin{proof}
Let $f\colon L^{n}\rightarrow L$ be nondecreasing, $S$-idempotent, and weakly horizontally $S$-minitive. Then, for any $\vect{x}\in L_n^{(0,2)}$
and any $c\in S$,
\begin{eqnarray*}
f(\vect{x})\wedge c &=& f(\vect{x})\wedge f(c,\ldots,c) ~\geqslant ~ f(\vect{x}\wedge c) ~=~ f((\vect{x}\wedge c)\vee c)\wedge f([\vect{x}\wedge c]^c) \\
&=& f(c,\ldots,c)\wedge f([\vect{x}]^c) ~ \geqslant ~ f(c,\ldots,c)\wedge f(\vect{x}) ~=~ f(\vect{x})\wedge c.
\end{eqnarray*}
Hence $f$ is weakly $S$-min homogeneous. The other statement can be proved similarly.
\end{proof}

\begin{lemma}\label{Weak-Hor-Min-Hom}
Assume $L$ is a bounded chain. Let $f\colon L^{n}\rightarrow L$ be nondecreasing and weakly $\co{\mathcal{R}}_f$-min homogeneous (resp.\ weakly $\co{\mathcal{R}}_f$-max
homogeneous). Then $f$ is weakly $\co{\mathcal{R}}_f$-max homogeneous (resp.\ weakly $\co{\mathcal{R}}_f$-min homogeneous) if and only if it is
weakly horizontally $\co{\mathcal{R}}_f$-maxitive (resp.\ weakly horizontally $\co{\mathcal{R}}_f$-minitive).
\end{lemma}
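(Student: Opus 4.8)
The plan is to read the statement as a biconditional together with its order-dual (the ``resp.'' version), and to treat the two implications asymmetrically: one is almost immediate from results already established, while the other requires a short direct argument. I will prove the non-dual biconditional, namely that under the standing hypotheses (nondecreasing and weakly $\co{\mathcal{R}}_f$-min homogeneous), weak $\co{\mathcal{R}}_f$-max homogeneity is equivalent to weak horizontal $\co{\mathcal{R}}_f$-maxitivity, and then invoke duality.

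For the implication that weak horizontal $\co{\mathcal{R}}_f$-maxitivity yields weak $\co{\mathcal{R}}_f$-max homogeneity, I would not compute anything directly. Since $f$ is nondecreasing and weakly $\co{\mathcal{R}}_f$-min homogeneous, Lemma~\ref{lemma:WeaklyMinMaxRangeIdem} gives that $f$ is $\co{\mathcal{R}}_f$-idempotent. Then $f$ is nondecreasing, $\co{\mathcal{R}}_f$-idempotent, and weakly horizontally $\co{\mathcal{R}}_f$-maxitive, so Lemma~\ref{lemma:Weak15682} applies verbatim and yields weak $\co{\mathcal{R}}_f$-max homogeneity.

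For the converse, assume $f$ is nondecreasing, weakly $\co{\mathcal{R}}_f$-min homogeneous, and weakly $\co{\mathcal{R}}_f$-max homogeneous, and fix $\vect{x}\in L_n^{(0,2)}$ and $c\in\co{\mathcal{R}}_f$. Because $\vect{x}=(\vect{x}\wedge c)\vee[\vect{x}]_c$ componentwise, monotonicity already gives $f(\vect{x})\geqslant f(\vect{x}\wedge c)\vee f([\vect{x}]_c)$, so only the reverse inequality is at stake. The steps I would carry out are: (1) verify that $[\vect{x}]_c\in L_n^{(0,2)}$ — in a chain $\vect{x}$ takes at most two distinct values, and replacing every entry that is $\leqslant c$ by $0$ leaves at most two distinct values — so that weak max homogeneity may legitimately be applied to $[\vect{x}]_c$; (2) record the componentwise identity $[\vect{x}]_c\vee c=\vect{x}\vee c$, whence weak max homogeneity applied to both $\vect{x}$ and $[\vect{x}]_c$ gives $f(\vect{x})\vee c=f(\vect{x}\vee c)=f([\vect{x}]_c\vee c)=f([\vect{x}]_c)\vee c$; (3) combine this with weak min homogeneity, $f(\vect{x}\wedge c)=f(\vect{x})\wedge c$, and finish by a distributive computation. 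Writing $m=f(\vect{x})$ and $d=f([\vect{x}]_c)$, monotonicity gives $d\leqslant m$, and step~(2) gives $c\vee d=c\vee m$, so $(m\wedge c)\vee d=(m\vee d)\wedge(c\vee d)=m\wedge(c\vee m)=m$ by distributivity and absorption. This reads $f(\vect{x}\wedge c)\vee f([\vect{x}]_c)=f(\vect{x})$, which is the desired inequality.

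I expect the main obstacle to be steps~(1) and~(2): the entire point is to remain inside $L_n^{(0,2)}$ so that only the \emph{weak} forms of homogeneity are used, and it is precisely the chain hypothesis that makes thresholding at $c$ preserve the bound of two distinct values and that reduces the closing computation to absorption. The order-dual statement — that under weak $\co{\mathcal{R}}_f$-max homogeneity, weak $\co{\mathcal{R}}_f$-min homogeneity is equivalent to weak horizontal $\co{\mathcal{R}}_f$-minitivity — follows by applying the same argument in the dual lattice, interchanging $\wedge$ with $\vee$, replacing $[\vect{x}]_c$ by $[\vect{x}]^c$, and swapping the roles of Lemmas~\ref{lemma:WeaklyMinMaxRangeIdem} and~\ref{lemma:Weak15682}.
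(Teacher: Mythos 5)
Your proof is correct; one half of it coincides with the paper's argument and the other half takes a cleaner route. For the direction in which both weak homogeneities are assumed and weak horizontal $\co{\mathcal{R}}_f$-maxitivity is to be shown, your computation is essentially the paper's, rearranged: the paper expands $f(\vect{x}\wedge c)\vee f([\vect{x}]_c)=\big(f(\vect{x})\wedge c\big)\vee f([\vect{x}]_c)$ by distributivity and uses weak max homogeneity once, at $[\vect{x}]_c$, to identify $c\vee f([\vect{x}]_c)$ with $f(c\vee[\vect{x}]_c)=f(\vect{x}\vee c)\geqslant f(\vect{x})$, whereas you apply weak max homogeneity at both $\vect{x}$ and $[\vect{x}]_c$ to get $c\vee f([\vect{x}]_c)=c\vee f(\vect{x})$ and close by absorption; the two computations are interchangeable. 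The genuine difference is in the converse direction: the paper proves it by a second direct computation (weak horizontal maxitivity applied at $\vect{x}\vee c$, the identity $[\vect{x}\vee c]_c=[\vect{x}]_c$, $\co{\mathcal{R}}_f$-idempotency, then weak horizontal maxitivity at $\vect{x}$), while you obtain it at no cost by combining Lemma~\ref{lemma:WeaklyMinMaxRangeIdem} with Lemma~\ref{lemma:Weak15682}. That shortcut is legitimate and non-circular, since Lemma~\ref{lemma:Weak15682} is established earlier and independently of the present lemma, and it is arguably the better organization: it exposes that implication as a pure corollary of results already in place, which is consistent with how the paper later assembles Theorem~\ref{theorem:WLP-WeakHomWeakHor}. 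One caveat concerns your commentary rather than your proof: the chain hypothesis is not what keeps $[\vect{x}]_c$ inside $L_n^{(0,2)}$ (that is immediate from the definition of $L_n^{(0,2)}$, in any bounded lattice), nor what gives $[\vect{x}]_c\vee c=\vect{x}\vee c$; your direct half is in fact valid over any bounded distributive lattice. The chain structure enters only through the cited Lemma~\ref{lemma:Weak15682}, whose proof relies on identities such as $[\vect{x}\wedge c]^c=[\vect{x}]^c$ (dually $[\vect{x}\vee c]_c=[\vect{x}]_c$), and these do fail in general lattices.
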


\begin{proof}
Let $f\colon L^{n}\rightarrow L$ be nondecreasing and weakly $\co{\mathcal{R}}_f$-min homogeneous. By Lemma~\ref{lemma:WeaklyMinMaxRangeIdem},
$f$ is $\co{\mathcal{R}}_f$-idempotent.

Assume first that $f$ is also weakly $\co{\mathcal{R}}_f$-max homogeneous. For any $\vect{x}\in L_n^{(0,2)}$ and any $c\in \co{\mathcal{R}}_f$,
we have $[\vect{x}]_c\in L_n^{(0,2)}$ and hence
\begin{eqnarray*}
  f(\vect{x}\wedge c)\vee f([\vect{x}]_c)
  &=& \big(f(\vect{x})\wedge c\big)\vee f([\vect{x}]_c) ~=~ \big(f(\vect{x})\vee f([\vect{x}]_c)\big)\wedge\big(c\vee f([\vect{x}]_c)\big) \\
  &=& f(\vect{x})\wedge f(c\vee [\vect{x}]_c)%
  ~=~ f(\vect{x}).
\end{eqnarray*}
Therefore, $f$ is weakly horizontally $\co{\mathcal{R}}_f$-maxitive.

Now assume that $f$ is weakly horizontally $\co{\mathcal{R}}_f$-maxitive and let us prove that it is weakly $\co{\mathcal{R}}_f$-max
homogeneous. For any $\vect{x}\in L_n^{(0,2)}$ and any $c\in \co{\mathcal{R}}_f$, we have
$$
f(\vect{x}\vee c) = f\big((\vect{x}\vee c)\wedge c\big)\vee f([\vect{x}\vee c]_c) = f(c,\ldots,c)\vee f([\vect{x}]_c) = c \vee f([\vect{x}]_c).
$$
Therefore, we have
$$
f(\vect{x}\vee c) = f(\vect{x}\wedge c)\vee f(\vect{x}\vee c) = f(\vect{x}\wedge c) \vee f([\vect{x}]_c) \vee c = f(\vect{x})\vee c
$$
and hence $f$ is weakly $\co{\mathcal{R}}_f$-max homogeneous. The other claim can be verified dually.
\end{proof}

The following result reassembles Theorem~\ref{theorem:WLP-WeakHom}, Lemmas~\ref{lemma:Weak15682} and \ref{Weak-Hor-Min-Hom}, and provides
characterizations of the $n$-ary polynomial functions on a chain $L$, given in terms of weak homogeneity and weak horizontal minitivity and
maxitivity.

\begin{theorem}\label{theorem:WLP-WeakHomWeakHor}
Let $L$ be a bounded chain and let $f\colon L^{n}\rightarrow L$ be a function. The following conditions are equivalent:
\begin{enumerate}
\item[(i)] $f$ is a polynomial function.

\item[(ii)] $f$ is nondecreasing, weakly $\co{\mathcal{R}}_f$-min homogeneous, and weakly $\co{\mathcal{R}}_f$-max homogeneous.

\item[(iii)] $f$ is nondecreasing, weakly $\co{\mathcal{R}}_f$-min homogeneous, and weakly horizontally $\co{\mathcal{R}}_f$-maxitive.

\item[(iv)] $f$ is nondecreasing, weakly horizontally $\co{\mathcal{R}}_f$-minitive, and weakly $\co{\mathcal{R}}_f$-max homogeneous.

\item[(v)] $f$ is nondecreasing, $\co{\mathcal{R}}_f$-idempotent, weakly horizontally $\co{\mathcal{R}}_f$-minitive, and weakly horizontally
$\co{\mathcal{R}}_f$-maxitive.
\end{enumerate}
\end{theorem}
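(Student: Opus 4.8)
The plan is to treat this theorem as a bookkeeping step that stitches together the three results it claims to reassemble, namely Theorem~\ref{theorem:WLP-WeakHom}, Lemma~\ref{lemma:Weak15682}, and Lemma~\ref{Weak-Hor-Min-Hom}, together with the idempotency observation in Lemma~\ref{lemma:WeaklyMinMaxRangeIdem}. No fresh estimate is required: every implication reduces to invoking one of these with $S=\co{\mathcal{R}}_f$. I would begin by recording the core equivalence $(i)\Leftrightarrow(ii)$, which is precisely Theorem~\ref{theorem:WLP-WeakHom} and needs nothing more.

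Next I would derive $(ii)\Leftrightarrow(iii)$ and $(ii)\Leftrightarrow(iv)$ from the two readings of Lemma~\ref{Weak-Hor-Min-Hom}. For a nondecreasing $f$ that is weakly $\co{\mathcal{R}}_f$-min homogeneous, that lemma states that weak $\co{\mathcal{R}}_f$-max homogeneity is equivalent to weak horizontal $\co{\mathcal{R}}_f$-maxitivity; since $(ii)$ and $(iii)$ share the common hypotheses of nondecreasingness and weak $\co{\mathcal{R}}_f$-min homogeneity, this is exactly the passage $(ii)\Leftrightarrow(iii)$. The dual reading of the same lemma, applied to a nondecreasing weakly $\co{\mathcal{R}}_f$-max homogeneous $f$, trades weak $\co{\mathcal{R}}_f$-min homogeneity for weak horizontal $\co{\mathcal{R}}_f$-minitivity and yields $(ii)\Leftrightarrow(iv)$.

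Finally I would close the loop with $(v)$. For $(v)\Rightarrow(ii)$ I would apply Lemma~\ref{lemma:Weak15682} twice with $S=\co{\mathcal{R}}_f$: nondecreasingness, $\co{\mathcal{R}}_f$-idempotency, and weak horizontal $\co{\mathcal{R}}_f$-minitivity give weak $\co{\mathcal{R}}_f$-min homogeneity, while the maxitive reading gives weak $\co{\mathcal{R}}_f$-max homogeneity, so $(ii)$ holds. For the converse $(ii)\Rightarrow(v)$, I would first use Lemma~\ref{lemma:WeaklyMinMaxRangeIdem} to obtain $\co{\mathcal{R}}_f$-idempotency from nondecreasingness together with either weak homogeneity, and then read off weak horizontal $\co{\mathcal{R}}_f$-maxitivity from the already established $(ii)\Leftrightarrow(iii)$ and weak horizontal $\co{\mathcal{R}}_f$-minitivity from $(ii)\Leftrightarrow(iv)$; assembling these gives $(v)$.

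Since each step is a direct citation, I do not expect a genuine obstacle. The only point demanding care is ensuring the hypotheses of Lemma~\ref{lemma:Weak15682} are in force before it is applied, specifically that $\co{\mathcal{R}}_f$-idempotency is available. This is exactly why idempotency appears as an explicit assumption in $(v)$ yet can be omitted from $(ii)$--$(iv)$, where Lemma~\ref{lemma:WeaklyMinMaxRangeIdem} supplies it for free from nondecreasingness and weak homogeneity.
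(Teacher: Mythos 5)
Your proposal is correct and matches the paper's approach exactly: the paper gives no separate proof, stating only that the theorem ``reassembles'' Theorem~\ref{theorem:WLP-WeakHom}, Lemmas~\ref{lemma:Weak15682} and \ref{Weak-Hor-Min-Hom}, and your bookkeeping---$(i)\Leftrightarrow(ii)$ from Theorem~\ref{theorem:WLP-WeakHom}, $(ii)\Leftrightarrow(iii)$ and $(ii)\Leftrightarrow(iv)$ from the two readings of Lemma~\ref{Weak-Hor-Min-Hom}, and $(ii)\Leftrightarrow(v)$ via Lemma~\ref{lemma:Weak15682} together with Lemma~\ref{lemma:WeaklyMinMaxRangeIdem}---is precisely the intended argument. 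Your closing remark about where $\co{\mathcal{R}}_f$-idempotency comes from in each direction is the right point of care.
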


By Lemma~\ref{lemma:WeaklyMinMaxIdem} and Proposition~\ref{prop:Hom-Id-46} (ii), any lattice polynomial function $f\colon L^n\to L$ satisfies
$f(\vect{x})=f(\langle\vect{x}\rangle_f)$. Using this fact, we can adjust the proof of Lemma~\ref{Weak-Hor-Min-Hom} to replace weak horizontal
$\co{\mathcal{R}}_f$-maxitivity (resp.\ weak horizontal $\co{\mathcal{R}}_f$-minitivity) with weak horizontal $L$-maxitivity (resp.\ weak
horizontal $L$-minitivity) in Lemma~\ref{Weak-Hor-Min-Hom} and Theorem~\ref{theorem:WLP-WeakHomWeakHor}.

\subsection{Weak median decomposability}

In the case of bounded distributive lattices $L$, the $n$-ary polynomial functions on $L$ are exactly those which
satisfy the median decomposition formula (\ref{MedDecomposition}); see \cite{Marc}. As we are going to see, in the case of chains, this condition can be relaxed
by restricting the satisfaction of (\ref{MedDecomposition}) by a function $f\colon L^n\to L$ to the vectors of $L_n^{(0,2)}\cup L_n^{(1,3)}$. In the
latter case, we say that $f\colon L^n\to L$ is \emph{weakly median decomposable}.

\begin{lemma}\label{lemma:WMD-RI}
Assume $L$ is a bounded chain and let $f\colon L^{n}\rightarrow L$ be nondecreasing. If $f$ is weakly median decomposable, then it is
$\co{\mathcal{R}}_f$-idempotent.
\end{lemma}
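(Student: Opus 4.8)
The plan is to prove directly that $f(c,\ldots,c)=c$ for every $c\in\co{\mathcal{R}}_f$. Since $f$ is nondecreasing we have $\co{\mathcal{R}}_f=[f(\vect{0}),f(\vect{1})]$, so it suffices to fix $c$ with $f(\vect{0})\leqslant c\leqslant f(\vect{1})$ and show $f(c,\ldots,c)=c$. The starting point is that the diagonal vector $(c,\ldots,c)$ lies in $L_n^{(0,2)}$, so weak median decomposability at coordinate $1$ gives $f(c,\ldots,c)=\median\big(f(0,c,\ldots,c),c,f(1,c,\ldots,c)\big)$. This median equals $c$ as soon as $f(0,c,\ldots,c)\leqslant c\leqslant f(1,c,\ldots,c)$, so the whole argument reduces to establishing these two one-sided bounds.

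First I would prove, by induction on the number of coordinates equal to $c$, the two one-sided claims: (L) for every $\vect{v}\in\{0,c\}^n$ we have $f(\vect{v})\leqslant c$; and (U) for every $\vect{v}\in\{1,c\}^n$ we have $f(\vect{v})\geqslant c$. For (L), the base case $\vect{v}=\vect{0}$ gives $f(\vect{0})\leqslant c$ by the choice of $c$. For the inductive step, I pick a coordinate $k$ with $v_k=c$; since $\vect{v}\in\{0,c\}^n\subseteq L_n^{(0,2)}$, weak median decomposability applies and yields $f(\vect{v})=\median\big(f(\vect{v}^0_k),c,f(\vect{v}^1_k)\big)$, where $\vect{v}^0_k\in\{0,c\}^n$ has one fewer $c$-coordinate, so $f(\vect{v}^0_k)\leqslant c$ by the induction hypothesis. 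The crucial elementary observation is that on a chain a median $\median(a,c,b)$ with $a\leqslant c$ is always $\leqslant c$, whatever the value of $b$ (two of the three arguments, namely $a$ and $c$, are $\leqslant c$). Hence $f(\vect{v})\leqslant c$ and the induction goes through. Claim (U) is entirely dual, with base case $\vect{v}=\vect{1}$ and the observation that $\median(a,c,b)\geqslant c$ whenever $b\geqslant c$.

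Finally I would assemble the pieces. The vectors $(0,c,\ldots,c)$ and $(1,c,\ldots,c)$ lie in $\{0,c\}^n$ and $\{1,c\}^n$ respectively, so Claims (L) and (U) give $f(0,c,\ldots,c)\leqslant c\leqslant f(1,c,\ldots,c)$. Substituting into the median decomposition of $(c,\ldots,c)$ forces $f(c,\ldots,c)=c$, which is precisely $\co{\mathcal{R}}_f$-idempotency. Note that this route only ever invokes weak median decomposability on vectors of $L_n^{(0,2)}$, so the weaker $L_n^{(1,3)}$ part of the hypothesis is not even needed here.

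I expect the main obstacle to be keeping the induction from blowing up. A naive repeated decomposition of $(c,\ldots,c)$ immediately spawns vectors carrying all three values $0$, $c$, and $1$, and one is then tempted to control both branches of every median simultaneously, which quickly becomes unmanageable. The key that keeps the argument finite and clean is the strictly one-sided character of Claims (L) and (U): knowing only that one branch of a median is $\leqslant c$ (resp.\ $\geqslant c$) already pins the median on the correct side, so in each inductive step I recurse on a single branch $\vect{v}^0_k$ (resp.\ $\vect{v}^1_k$) that remains inside $\{0,c\}^n$ (resp.\ $\{1,c\}^n$), and hence inside $L_n^{(0,2)}$, where weak median decomposability is guaranteed to hold.
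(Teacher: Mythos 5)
Your proof is correct and follows essentially the same route as the paper's: both peel coordinates off one at a time by applying weak median decomposability to vectors in $\{0,c\}^n$ (resp.\ $\{1,c\}^n$), all of which lie in $L_n^{(0,2)}$, until reaching $\vect{0}$ (resp.\ $\vect{1}$), where $c\in[f(\vect{0}),f(\vect{1})]$ closes the argument. The only difference is organizational: the paper argues by contradiction, assuming $f(c,\ldots,c)>c$ and using nondecreasing monotonicity to propagate equalities $f(c,\ldots,c)=f(0,c,\ldots,c)=\cdots=f(\vect{0})$, whereas you run a direct induction with one-sided median bounds, a mild streamlining which, as you note, does not even invoke monotonicity inside the induction.
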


\begin{proof}
Let $c\in\co{\mathcal{R}}_f$ and suppose $f(c,\ldots,c)>c$. Then, by weak median decomposability, we have
$$
f(c,\ldots,c)=\median(f(0,c,\ldots,c), c, f(1,c,\ldots,c))=f(0,c,\ldots,c).
$$
By applying the same argument, we obtain $f(c,\ldots,c)=f(0,0,c,\ldots,c)$, and finally
$$
c<f(c,\ldots,c)=f(\vect{0})\leqslant c,
$$
that is a contradiction. The case $f(c,\ldots,c)<c$ can be dealt with similarly.
\end{proof}

\begin{proposition}\label{prop:WeakMedWeakHom}
Let $L$ be a bounded chain and let $f\colon L^{n}\rightarrow L$ be a nondecreasing function. The following conditions are equivalent:
\begin{enumerate}
\item[(i)] $f$ is weakly median decomposable.

\item[(ii)] $f$ weakly $\co{\mathcal{R}}_f$-min homogeneous and weakly $\co{\mathcal{R}}_f$-max homogeneous.
\end{enumerate}
\end{proposition}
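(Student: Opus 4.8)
The plan is to prove the equivalence of weak median decomposability and the conjunction of weak $\co{\mathcal{R}}_f$-min and weak $\co{\mathcal{R}}_f$-max homogeneity by exploiting the median reformulation (\ref{Min-Max-Hom-Med}) of simultaneous min/max homogeneity, restricted to the appropriate vector classes. The guiding observation is that weak median decomposability says precisely that $f(\vect{x})=\median(f(\vect{x}^0_k),x_k,f(\vect{x}^1_k))$ holds on $L_n^{(0,2)}\cup L_n^{(1,3)}$, while the two weak homogeneity properties together are, by the discussion preceding Lemma~\ref{lemma:WeaklyMinMaxIdem}, equivalent to demanding $f(\median(r,\vect{x},s))=\median(r,f(\vect{x}),s)$ for all $\vect{x}\in L_n^{(0,2)}$ and $r,s\in\co{\mathcal{R}}_f$. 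So the two conditions are two different median-commutation statements, and the work lies in matching their scopes.

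For the direction $(ii)\Rightarrow(i)$, assume $f$ is nondecreasing and weakly min/max homogeneous; by Lemma~\ref{lemma:WeaklyMinMaxRangeIdem} it is $\co{\mathcal{R}}_f$-idempotent, so $\co{\mathcal{R}}_f=[f(\vect{0}),f(\vect{1})]$. Fix $\vect{x}\in L_n^{(0,2)}\cup L_n^{(1,3)}$ and $k\in[n]$; I would verify the median decomposition (\ref{MedDecomposition}) by splitting on the position of $x_k$ relative to the threshold values $f(\vect{x}^0_k)$ and $f(\vect{x}^1_k)$. Since $\vect{x}^0_k$ and $\vect{x}^1_k$ are obtained by setting one coordinate of $\vect{x}$ to $0$ or $1$, they remain in $L_n^{(0,2)}$ (note that trimming $\vect{x}\in L_n^{(1,3)}$ at coordinate $k$ drops the distinct-value count to at most $2$ and raises the Boolean count, landing us in a class where the weak homogeneity hypotheses apply). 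I would then write $\vect{x}=\median(f(\vect{0}),\vect{x},f(\vect{1}))$ coordinatewise where possible and push $f$ through using the median-commutation form of weak homogeneity, recovering the three-term median on the right. In the boundary cases $x_k\le f(\vect{x}^0_k)$ or $x_k\ge f(\vect{x}^1_k)$, nondecreasing monotonicity forces $f(\vect{x})$ to equal the nearer endpoint, which is exactly what the median returns.

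For $(i)\Rightarrow(ii)$, assume $f$ is nondecreasing and weakly median decomposable. By Lemma~\ref{lemma:WMD-RI}, $f$ is $\co{\mathcal{R}}_f$-idempotent, hence $\co{\mathcal{R}}_f=[f(\vect{0}),f(\vect{1})]$, so for $c\in\co{\mathcal{R}}_f$ the constant value $c$ is attained and satisfies $f(c,\dots,c)=c$. To prove weak min homogeneity I fix $\vect{x}\in L_n^{(0,2)}$ and $c\in\co{\mathcal{R}}_f$ and aim at $f(\vect{x}\wedge c)=f(\vect{x})\wedge c$. The natural route is to apply the weak median decomposition repeatedly, one coordinate at a time, to the vector $\vect{x}\wedge c$, using that each single-coordinate modification of a vector in $L_n^{(0,2)}$ stays inside $L_n^{(0,2)}\cup L_n^{(1,3)}$, together with idempotency to evaluate the clamped coordinates. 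Weak max homogeneity follows by the order-dual argument, replacing $\wedge$ by $\vee$, $0$ by $1$, and the lower endpoint by the upper one.

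The main obstacle I anticipate is the bookkeeping of membership in the classes $L_n^{(0,2)}$ and $L_n^{(1,3)}$ under the coordinate operations: one must check that every intermediate vector produced by trimming $\vect{x}\wedge c$ (or $\vect{x}\vee c$) coordinatewise, and every $\vect{x}^0_k,\vect{x}^1_k$ arising in the decomposition, actually lies in the set $L_n^{(0,2)}\cup L_n^{(1,3)}$ on which the weak hypotheses are assumed — this is exactly why the class $L_n^{(1,3)}$ was included in the definition of weak median decomposability, and the inductive coordinate-by-coordinate clamping must be arranged so that the distinct-value count never exceeds the allowed bound at the step where a hypothesis is invoked.
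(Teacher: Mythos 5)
Your direction $(i)\Rightarrow(ii)$ is sketched in the same spirit as the paper's own proof: get $\co{\mathcal{R}}_f$-idempotency from Lemma~\ref{lemma:WMD-RI}, then, for $\vect{x}=\median(c,\vect{e},d)\in L_n^{(0,2)}$ and $r\in\co{\mathcal{R}}_f$, apply weak median decomposability coordinate by coordinate, the intermediate vectors (values among $\{c,r,1\}$) staying in $L_n^{(1,3)}$ — you correctly identify this as the reason $L_n^{(1,3)}$ is built into the definition. You do omit the actual mechanism (the case split $r\geqslant d$, $r\leqslant c$, $c<r<d$; the observation that $f(\median(c,\vect{e},r)^0_k)\leqslant r$, which collapses the median to $f(\median(c,\vect{e},r)^1_k)\wedge r$; and the final sandwich $f(\median(c,\vect{e},r))\wedge r\leqslant f(\median(c,\vect{e},d))\wedge r\leqslant f(\median(c,\vect{e},1))\wedge r$), but the route is the right one.

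The genuine gap is in $(ii)\Rightarrow(i)$. Your plan is a direct verification of (\ref{MedDecomposition}) at each $\vect{x}\in L_n^{(0,2)}\cup L_n^{(1,3)}$, and it hinges on the claim that $\vect{x}^0_k$ and $\vect{x}^1_k$ ``remain in $L_n^{(0,2)}$,'' i.e.\ in the class where the weak homogeneity hypotheses are available. That claim is false: setting a coordinate to $0$ or $1$ does not trim the value set, it typically adds a new Boolean value. Already for $\vect{x}=\median(c,\vect{e},d)\in L_n^{(0,2)}$ with non-Boolean values $c<d$, the vector $\vect{x}^0_k$ has values among $\{0,c,d\}$, so it lies in $L_n^{(1,3)}$ but outside $L_n^{(0,2)}$; and for $\vect{x}=(1,1,a,b)\in L_4^{(1,3)}$ with $0<a<b<1$, the vector $\vect{x}^0_1=(0,1,a,b)$ has four distinct values and lies in neither class. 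So at exactly the step where you want to ``push $f$ through'' the median, the hypotheses do not apply to the vectors at hand. Your boundary cases are also not covered by monotonicity alone: from $x_k\leqslant f(\vect{x}^0_k)$, nondecreasing monotonicity gives only $f(\vect{x}^0_k)\leqslant f(\vect{x})\leqslant f(\vect{x}^1_k)$, not the equality $f(\vect{x})=f(\vect{x}^0_k)$ — that equality is essentially the content of median decomposability, i.e.\ the thing being proved. The paper avoids all of this: for $(ii)\Rightarrow(i)$ it simply invokes Theorem~\ref{theorem:WLP-WeakHom} (nondecreasing plus weak $\co{\mathcal{R}}_f$-min/max homogeneity already implies $f$ is a polynomial function — that theorem's proof, with its extremal set $I^*$ and the set $J$, is where the real work lives) and then Theorem~\ref{mainChar}, by which polynomial functions are fully median decomposable, hence weakly so. Any self-contained argument for this direction would in effect have to reproduce the proof of Theorem~\ref{theorem:WLP-WeakHom}; a coordinatewise trimming argument confined to $L_n^{(0,2)}\cup L_n^{(1,3)}$ does not go through.
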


\begin{proof}
We first prove that $(ii)\Rightarrow(i)$. By Theorem~\ref{theorem:WLP-WeakHom}, $f$ is a polynomial function, and thus, by
Theorem~\ref{mainChar}, it is median decomposable. In particular, it is weakly median decomposable.

Now we prove $(i)\Rightarrow(ii)$. We only show that $f$ is weakly $\co{\mathcal{R}}_f$-min homogeneous. The other property can be proved
dually. Let $\vect{x}=\median(c,\vect{e},d)\in L_n^{(0,2)}$, where $c,d\in L$ and $c\leqslant d$, and let $r\in \co{\mathcal{R}}_f$. By
Lemma~\ref{lemma:WMD-RI}, we have $f(r,\ldots,r)=r$.

\begin{itemize}
\item If $r\geqslant d$ then $f(\vect{x}\wedge r)=f(\vect{x})=f(\vect{x})\wedge f(r,\ldots,r)=f(\vect{x})\wedge r$.

\item If $r\leqslant c$ then $f(\vect{x}\wedge r)=f(r,\ldots,r)=f(\vect{x})\wedge f(r,\ldots,r)=f(\vect{x})\wedge r$.

\item Assume $r\in\left]c,d\right[$ and let $K=\{k\in [n]: x_k=d\}$. By weak median decomposability, for any $k\in K$, we have
\begin{eqnarray*}
f(\vect{x}\wedge r) &=& f(\median(c,\vect{e},r))\\
&=& f(\median(c,\vect{e},r)_k^1)\wedge \big(r\vee f(\median(c,\vect{e},r)_k^0)\big)\\
&=& f(\median(c,\vect{e},r)_k^1)\wedge r.
\end{eqnarray*}
By repeating this process, we finally obtain $f(\vect{x}\wedge r)=f(\median(c,\vect{e},1))\wedge r$. Since $f$ is nondecreasing, we have
\begin{eqnarray*}
f(\vect{x}\wedge r) &=& f(\median(c,\vect{e},r))\wedge r \leqslant f(\median(c,\vect{e},d))\wedge r\\
&\leqslant & f(\median(c,\vect{e},1))\wedge r = f(\vect{x}\wedge r)
\end{eqnarray*}
that is $f(\vect{x}\wedge r)=f(\vect{x})\wedge r$.\qedhere
\end{itemize}
\end{proof}

\begin{remark}
Using the binary function $f$ given in Remark~\ref{HomogNotLattice}, we can see that Proposition~\ref{prop:WeakMedWeakHom} does not hold in the
general case of bounded distributive lattices. Indeed, as observed,  $f$ is nondecreasing and both weakly $\co{\mathcal{R}}_f$-min homogeneous
and weakly $\co{\mathcal{R}}_f$-max homogeneous, but $f(b,b)=1\neq b=\median(f(0,b),b,f(1,b))$ which shows that $f$ is not weakly median
decomposable.
\end{remark}

From Proposition~\ref{prop:WeakMedWeakHom} and Theorem~\ref{theorem:WLP-WeakHom}, we obtain the following description of polynomial functions
given in terms of weak median decomposability.

\begin{theorem}\label{theorem:WLP-WeaklyMed}
Let $L$ be a bounded chain. A nondecreasing function $f\colon L^{n}\rightarrow L$ is a polynomial function if and only if it is weakly median
decomposable.
\end{theorem}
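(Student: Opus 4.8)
The plan is to obtain this characterization as an immediate corollary of the two equivalences already in hand, namely Theorem~\ref{theorem:WLP-WeakHom} and Proposition~\ref{prop:WeakMedWeakHom}, both of which concern nondecreasing functions on a bounded chain. Since the statement to be proved restricts attention to nondecreasing $f$ on both sides of the equivalence, no extra hypothesis needs to be manufactured: monotonicity is carried along throughout, and the whole argument reduces to composing two biconditionals.

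First I would record the forward implication. If $f$ is a nondecreasing polynomial function, then by Theorem~\ref{theorem:WLP-WeakHom} it is weakly $\co{\mathcal{R}}_f$-min homogeneous and weakly $\co{\mathcal{R}}_f$-max homogeneous; feeding these two properties, together with monotonicity, into the implication $(ii)\Rightarrow(i)$ of Proposition~\ref{prop:WeakMedWeakHom} yields that $f$ is weakly median decomposable. (Alternatively, one could bypass weak homogeneity and simply note that a polynomial function is median decomposable by Theorem~\ref{mainChar}, hence a fortiori weakly median decomposable; but routing through Proposition~\ref{prop:WeakMedWeakHom} keeps the two directions symmetric.)

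For the converse, I would start from a nondecreasing, weakly median decomposable $f$. Proposition~\ref{prop:WeakMedWeakHom}, in the direction $(i)\Rightarrow(ii)$, then gives that $f$ is weakly $\co{\mathcal{R}}_f$-min homogeneous and weakly $\co{\mathcal{R}}_f$-max homogeneous. Applying Theorem~\ref{theorem:WLP-WeakHom} to these three properties (nondecreasing, weak min homogeneity, weak max homogeneity) concludes that $f$ is a polynomial function. Chaining the two directions completes the equivalence.

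As for the main obstacle: there is essentially none left at this stage, since the substantive work has been front-loaded into Proposition~\ref{prop:WeakMedWeakHom}, whose $(i)\Rightarrow(ii)$ direction carries out the delicate case analysis on $r$ relative to $c$ and $d$, repeatedly invoking weak median decomposability to climb from $f(\median(c,\vect{e},r))$ up to $f(\median(c,\vect{e},1))$, and into Theorem~\ref{theorem:WLP-WeakHom}, whose sufficiency direction builds the disjunctive-normal-form expansion. The only point demanding a moment of care is to confirm that both cited results apply to the same class of functions under the same standing hypothesis of monotonicity on a bounded chain, so that the composition is legitimate; once that bookkeeping is checked, the proof is a one-line concatenation.
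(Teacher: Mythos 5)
Your proposal is correct and follows exactly the paper's own route: the paper likewise obtains Theorem~\ref{theorem:WLP-WeaklyMed} immediately by combining Proposition~\ref{prop:WeakMedWeakHom} with Theorem~\ref{theorem:WLP-WeakHom}, with the substantive work already done in those two results. Your bookkeeping observation (that both results share the standing hypotheses of monotonicity on a bounded chain) is the only point that needs checking, and you check it correctly.
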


\begin{remark}
Note that Theorem~\ref{theorem:WLP-WeaklyMed} does not hold if weak median decomposability would have been defined in terms of vectors in
$L_n^{(0,2)}$ only. To see this, let $L=\{0,c,1\}$ and consider the following nondecreasing function $f\colon L^3\to L$, defined by
$$
f(x_1,x_2,x_3)=
\begin{cases}
1, & \mbox{if $\median(x_1,x_2,x_3)=1$,}\\
c, & \mbox{if $\median(x_1,x_2,x_3)=x_1\wedge x_2\wedge x_3=c$,}\\
0, & \mbox{otherwise}.
\end{cases}
$$
It is easy to this that $f$ is median decomposable for vectors in $L_n^{(0,2)}$, but it is not a polynomial function, e.g., we have $f(0,c,c)=0$
but $f(0,1,1)\wedge c=c$.
\end{remark}

\subsection{Strong idempotency and componentwise range convexity}

Assume $L$ is a bounded chain. By Theorem~\ref{mainChar}, a nondecreasing function $f\colon L^{n}\rightarrow L$ is a polynomial function if and only if it is strongly
idempotent, has a convex range, and a componentwise convex range. Our next result shows that the condition requiring a convex range becomes
redundant in the case when $L$ is a chain, since it becomes a consequence of componentwise range convexity.

\begin{lemma}\label{lemma:ComponentwiseImpliesConv}
Let $L$ be a bounded chain. If a nondecreasing function $f\colon L^{n}\rightarrow L$ has a componentwise convex range, then it has a convex
range.
\end{lemma}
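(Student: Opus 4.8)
The plan is to show directly that $\mathcal{R}_f$ is the interval $[f(\vect{0}),f(\vect{1})]$. Since $f$ is nondecreasing, we already know $\co{\mathcal{R}}_f=[f(\vect{0}),f(\vect{1})]$, and in particular $\mathcal{R}_f\subseteq[f(\vect{0}),f(\vect{1})]$; so the whole task reduces to proving the reverse inclusion $[f(\vect{0}),f(\vect{1})]\subseteq\mathcal{R}_f$. The case $n=1$ is immediate, since there componentwise range convexity is by definition range convexity. Thus I would assume $n>1$ throughout.

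First I would connect $\vect{0}$ to $\vect{1}$ by a monotone path that changes one coordinate at a time. Concretely, for $k=0,1,\ldots,n$ let $\vect{y}_k$ be the vector whose first $k$ components equal $1$ and whose remaining components equal $0$, so that $\vect{y}_0=\vect{0}$, $\vect{y}_n=\vect{1}$, and $\vect{y}_0\leqslant\vect{y}_1\leqslant\cdots\leqslant\vect{y}_n$. Consecutive vectors $\vect{y}_{k-1}$ and $\vect{y}_k$ agree in every coordinate except the $k$th, which passes from $0$ to $1$. I would then apply componentwise range convexity to the unary function $f_{\vect{y}_{k-1}}^k$: its range is convex and contains both $f_{\vect{y}_{k-1}}^k(0)=f(\vect{y}_{k-1})$ and $f_{\vect{y}_{k-1}}^k(1)=f(\vect{y}_k)$, hence it contains the whole interval $[f(\vect{y}_{k-1}),f(\vect{y}_k)]$. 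Since the range of $f_{\vect{y}_{k-1}}^k$ is contained in $\mathcal{R}_f$, this gives $[f(\vect{y}_{k-1}),f(\vect{y}_k)]\subseteq\mathcal{R}_f$ for every $k\in[n]$.

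Finally I would glue these intervals together. By monotonicity we have $f(\vect{0})=f(\vect{y}_0)\leqslant f(\vect{y}_1)\leqslant\cdots\leqslant f(\vect{y}_n)=f(\vect{1})$, so it remains to observe that $\bigcup_{k\in[n]}[f(\vect{y}_{k-1}),f(\vect{y}_k)]=[f(\vect{0}),f(\vect{1})]$. This is exactly the step where totality of $L$ is used: in a chain, whenever $a\leqslant b\leqslant c$ one has $[a,b]\cup[b,c]=[a,c]$, because any $x$ with $a\leqslant x\leqslant c$ is comparable to $b$ and therefore lies in one of the two pieces. Iterating this over the consecutive endpoints $f(\vect{y}_0),\ldots,f(\vect{y}_n)$ collapses the union to $[f(\vect{0}),f(\vect{1})]$, yielding $[f(\vect{0}),f(\vect{1})]\subseteq\mathcal{R}_f$ and hence $\mathcal{R}_f=[f(\vect{0}),f(\vect{1})]$, a convex set. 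I expect this gluing to be the only delicate point: on a general bounded distributive lattice the consecutive intervals along such a path can fail to exhaust $[f(\vect{0}),f(\vect{1})]$, so the argument genuinely depends on $L$ being a chain.
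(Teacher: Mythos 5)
Your proof is correct and is essentially the paper's own argument: the paper walks the same path $\vect{e}_{\{1,\ldots,i\}}$, $i=0,\ldots,n$, through the vertices of $\{0,1\}^n$, invokes componentwise range convexity along each edge, and glues the pieces via the single chain of inclusions $\co{\mathcal{R}}_f=[f(\vect{0}),f(\vect{1})]\subseteq \bigcup_{i=1}^n\,[f(\vect{e}_{\{1,\ldots,i-1\}}),f(\vect{e}_{\{1,\ldots,i\}})]\subseteq \mathcal{R}_f\subseteq \co{\mathcal{R}}_f$. Your version merely spells out the gluing step (where the chain hypothesis enters) and the $n=1$ case, both of which the paper leaves implicit.
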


\begin{proof}
Since $L$ is a chain and $f$ has a componentwise convex range, we have
$$
\co{\mathcal{R}}_f =[f(\vect{0}),f(\vect{1})]\subseteq \bigcup_{i=1}^n \, [f(\vect{e}_{\{1,\ldots,i-1\}}),f(\vect{e}_{\{1,\ldots,i\}})]\subseteq
\mathcal{R}_f \subseteq \co{\mathcal{R}}_f.
$$
Therefore $\co{\mathcal{R}}_f=\mathcal{R}_f$ and $f$ has a convex range.
\end{proof}

Using Lemma~\ref{lemma:ComponentwiseImpliesConv}, we obtain the following characterization of polynomial functions which weakens condition
$(iii)$ of Theorem~\ref{mainChar} when $L$ is a chain.

\begin{theorem}\label{thm:ChainStrIdemWLP}
Let $L$ be a bounded chain. A function $f\colon L^{n}\rightarrow L$ is a polynomial function if and only if it is nondecreasing, strongly
idempotent, and has a componentwise convex range.
\end{theorem}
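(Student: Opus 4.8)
The plan is to reduce the statement to condition $(iii)$ of Theorem~\ref{mainChar}, which already characterizes the polynomial functions on a chain as those nondecreasing, strongly idempotent functions having both a convex range and a componentwise convex range. The only discrepancy between that condition and the one asserted here is the extra requirement of a convex range, so the entire task is to verify that this requirement becomes redundant once $L$ is a chain.

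For necessity, I would simply invoke Theorem~\ref{mainChar} $(iii)$: if $f$ is a polynomial function, then it is nondecreasing, strongly idempotent, and has both a convex range and a componentwise convex range, so in particular it satisfies the three conditions in the statement. For sufficiency, suppose $f$ is nondecreasing, strongly idempotent, and has a componentwise convex range. The key step is to apply Lemma~\ref{lemma:ComponentwiseImpliesConv}, which guarantees that a nondecreasing function on a chain with a componentwise convex range automatically has a convex range. Having recovered this fourth condition, $f$ meets all hypotheses of Theorem~\ref{mainChar} $(iii)$, and is therefore a polynomial function.

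The proof thus carries essentially no obstacle of its own, since all the substantive work has been front-loaded into Lemma~\ref{lemma:ComponentwiseImpliesConv}. That argument exploits the chain structure to cover the interval $[f(\vect{0}),f(\vect{1})]$ by the consecutive subintervals $[f(\vect{e}_{\{1,\ldots,i-1\}}),f(\vect{e}_{\{1,\ldots,i\}})]$, each of which lies in $\mathcal{R}_f$ by componentwise range convexity applied to the $i$th variable. The only point requiring care is to apply the implication in the correct direction, namely componentwise convexity yielding convexity; the converse implication would fail in general, so the reduction must be stated cleanly so as to exploit exactly the direction established by the lemma.
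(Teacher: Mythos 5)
Your proof is correct and follows exactly the paper's route: the paper likewise derives Theorem~\ref{thm:ChainStrIdemWLP} by noting that Lemma~\ref{lemma:ComponentwiseImpliesConv} makes the convex-range hypothesis in condition $(iii)$ of Theorem~\ref{mainChar} redundant when $L$ is a chain. Your description of how the lemma covers $[f(\vect{0}),f(\vect{1})]$ by the subintervals $[f(\vect{e}_{\{1,\ldots,i-1\}}),f(\vect{e}_{\{1,\ldots,i\}})]$ also matches the paper's argument.
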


\begin{remark}
None of the conditions provided in Theorem~\ref{thm:ChainStrIdemWLP} can be dropped off. For instance, let $L$ be the real interval $[0,1]$.
Clearly, the unary function $f(x)=x^2$ is nondecreasing and has a componentwise convex range, but it is not strongly idempotent. On the other
hand, the function $f\colon L^2\rightarrow L$ defined by
\[
f(x_1,x_2) =
\begin{cases}
1, & \text{if $x_1=x_2=1$,} \\
0, & \text{otherwise,}
\end{cases}
\]
is nondecreasing and strongly idempotent but it does not have a componentwise convex range, e.g., both $f_{\vect{1}}^1$ and $f_{\vect{1}}^2$ do
not have convex ranges.
\end{remark}

In the special case of real interval lattices, i.e., where $L=[a,b]$ for reals $a\leqslant b$, the property of having a convex range, as well as
the property of having a componentwise convex range, are consequences of continuity. More precisely, for nondecreasing functions
$f\colon [a,b]^n\to\mathbb{R}$, being continuous reduces to being continuous in each variable, and this latter property is equivalent to having a
componentwise convex range. In fact, since polynomial functions are continuous, the condition of having a componentwise convex range can be
replaced in Theorem \ref{thm:ChainStrIdemWLP} by continuity in each variable. Also, by Proposition~\ref{prop:Hom-Id-46} $(iii)$, we can add
continuity and replace $\co{\mathcal{R}}_f$ by $\mathcal{R}_f$ in Theorems~\ref{theorem:WLP-WeakHom} and \ref{theorem:WLP-WeakHomWeakHor}.

\begin{corollary}
Assume that $L$ is a bounded real interval $[a,b]$. A function $f\colon L^{n}\rightarrow L$ is a polynomial function if and only if it is
nondecreasing, strongly idempotent, and continuous (in each variable).
\end{corollary}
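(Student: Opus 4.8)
The plan is to deduce this from Theorem~\ref{thm:ChainStrIdemWLP} by showing that, for a nondecreasing function on the chain $L=[a,b]$, having a componentwise convex range coincides with continuity in each variable, and that polynomial functions are continuous. Since $[a,b]$ is a bounded chain, Theorem~\ref{thm:ChainStrIdemWLP} already asserts that $f$ is a polynomial function if and only if it is nondecreasing, strongly idempotent, and has a componentwise convex range; so the entire task reduces to trading this last condition for continuity.

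First I would establish the key unary fact: a nondecreasing $g\colon[a,b]\to[a,b]$ is continuous if and only if its range $\mathcal{R}_g$ is convex. The forward implication is the intermediate value theorem. For the converse I would argue by contraposition: a nondecreasing function can only fail to be continuous through a jump, so if $g$ is discontinuous at some $x_0$, then the one-sided limits satisfy $g(x_0^-)\leqslant g(x_0^+)$ with strict inequality on at least one side, and every value strictly between them other than $g(x_0)$ is absent from $\mathcal{R}_g$; this produces a gap, whence $\mathcal{R}_g$ is not convex. Applying this to each section $f_{\vect{a}}^k$, which is nondecreasing, shows that $f$ has a componentwise convex range if and only if $f$ is continuous in each variable.

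Next I would record that every polynomial function is continuous: by definition it is obtained from projections and constant functions by finitely many applications of $\wedge$ and $\vee$, all of which are continuous maps on $[a,b]$, so the composition is continuous and, a fortiori, continuous in each variable. Combining this with the preceding equivalence and Theorem~\ref{thm:ChainStrIdemWLP}, sufficiency follows (nondecreasing, strongly idempotent, and continuous in each variable forces a componentwise convex range, hence $f$ is a polynomial function) and necessity follows (a polynomial function is nondecreasing, strongly idempotent by Theorem~\ref{mainChar}, and continuous). To justify the parenthetical identification of ``continuous'' with ``continuous in each variable,'' I would finally invoke the classical fact that a function nondecreasing in each variable and separately continuous is jointly continuous: near any point, monotonicity squeezes $f$ between two corner evaluations that converge by separate continuity.

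The step I expect to require the most care is the converse of the unary lemma, that is, showing that convexity of the range of a monotone function forces continuity; the jump-discontinuity bookkeeping (accounting for the possible value $g(x_0)$ lying inside the jump interval, and handling the endpoint cases $x_0=a$ and $x_0=b$) is where the argument must be made precise. Everything else is a direct specialization of the chain result together with standard continuity facts.
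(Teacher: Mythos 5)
Your proposal is correct and follows essentially the same route as the paper: the text preceding the corollary derives it from Theorem~\ref{thm:ChainStrIdemWLP} by observing precisely that, for nondecreasing functions on $[a,b]$, continuity in each variable is equivalent to having a componentwise convex range, that polynomial functions are continuous, and that separate continuity plus monotonicity gives joint continuity. You merely supply the details (the jump-discontinuity argument and the squeeze argument) that the paper leaves implicit.
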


\subsection{Comonotonic maxitivity and minitivity}

Let $L$ be a bounded chain. Two vectors $\vect{x},\vect{x}'\in L^n$ are said to be
\emph{comonotonic} if there exists a permutation $\sigma$ on $[n]$ such that $\vect{x},\vect{x}'\in L^n_\sigma$. A function $f\colon L^{n}\rightarrow L$ is said to be
\begin{itemize}
\item \emph{comonotonic minitive} if, for any two comonotonic vectors $\vect{x},\vect{x'}\in L^n$, we have
$$
f(\vect{x}\wedge \vect{x'}) = f(\vect{x})\wedge f(\vect{x'}).
$$

\item \emph{comonotonic maxitive} if, for any two comonotonic vectors $\vect{x},\vect{x'}\in L^n$, we have
$$
f(\vect{x}\vee \vect{x'}) = f(\vect{x})\vee f(\vect{x'}).
$$
\end{itemize}

Note that for any $\vect{x}\in L^n$ and any $c\in L$, we have that $\vect{x}$ and $(c,\ldots ,c)$ are comonotonic and that $\vect{x}\vee c$ and
$[\vect{x}]^c$ are comonotonic. These facts lead to the following result.

\begin{lemma}\label{lemma:ComonotHomog}
Let $L$ be a bounded chain and let $S$ be a nonempty subset of $L$. If a function $f\colon L^{n}\rightarrow L$ is comonotonic minitive (resp.\
comonotonic maxitive), then it is horizontally $S$-minitive (resp.\ horizontally $S$-maxitive). Moreover, if $f$ is $S$-idempotent, then it is
$S$-min homogeneous (resp.\ $S$-max homogeneous).
\end{lemma}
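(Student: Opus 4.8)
The plan is to prove the two comonotonicity-to-horizontal implications first, since they follow almost immediately from the stated comonotonicity observations, and then derive the homogeneity statements as special cases under the idempotency hypothesis.

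First I would establish horizontal minitivity from comonotonic minitivity. Recall that $[\vect{x}]^c$ is obtained from $\vect{x}$ by raising to $1$ every component $x_i$ with $x_i\geqslant c$. The key observation, which the paragraph preceding the lemma supplies, is that $\vect{x}\vee c$ and $[\vect{x}]^c$ are comonotonic. I would verify this directly: in any standard simplex $L^n_\sigma$ containing $\vect{x}$, both $\vect{x}\vee c$ and $[\vect{x}]^c$ respect the same order $\sigma$, since raising all entries below $c$ up to $c$ (to form $\vect{x}\vee c$) and raising all entries at least $c$ up to $1$ (to form $[\vect{x}]^c$) are monotone componentwise operations that preserve the ranking $\sigma$. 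Then comonotonic minitivity gives
$$
f\big((\vect{x}\vee c)\wedge[\vect{x}]^c\big)=f(\vect{x}\vee c)\wedge f([\vect{x}]^c).
$$
The main step is then the pointwise identity $(\vect{x}\vee c)\wedge[\vect{x}]^c=\vect{x}$: for a coordinate with $x_i\geqslant c$ we get $(x_i\vee c)\wedge 1=x_i$, and for a coordinate with $x_i<c$ we get $(x_i\vee c)\wedge x_i=c\wedge x_i=x_i$. This yields exactly \eqref{HorMin}, so $f$ is horizontally $S$-minitive for every $S$. The maxitive case is dual, using that $\vect{x}\wedge c$ and $[\vect{x}]_c$ are comonotonic and satisfy $(\vect{x}\wedge c)\vee[\vect{x}]_c=\vect{x}$.

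For the second assertion, I would assume $f$ is additionally $S$-idempotent. Here I use the other comonotonicity observation: for any $\vect{x}\in L^n$ and any $c\in L$, the vectors $\vect{x}$ and $(c,\ldots,c)$ are comonotonic (every constant vector lies in every standard simplex). Fixing $c\in S$, comonotonic minitivity gives $f(\vect{x}\wedge c)=f(\vect{x}\wedge(c,\ldots,c))=f(\vect{x})\wedge f(c,\ldots,c)$, and $S$-idempotency replaces $f(c,\ldots,c)$ by $c$, producing \eqref{minHom}. The maxitive case is again dual. I do not expect a serious obstacle here; the only point requiring a little care is the verification that the two pairs of vectors are genuinely comonotonic, i.e.\ that the coordinatewise truncations at $c$ preserve a common ordering permutation, but this is a routine check on coordinates and is precisely what the remark before the lemma asserts.
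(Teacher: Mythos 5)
Your proof is correct and takes exactly the route the paper intends: the lemma is stated there without a written proof, as an immediate consequence of the preceding observation that $\vect{x}$ and $(c,\ldots,c)$ are comonotonic and that $\vect{x}\vee c$ and $[\vect{x}]^c$ are comonotonic. Your argument simply makes explicit those two comonotonicity checks together with the pointwise identities $(\vect{x}\vee c)\wedge[\vect{x}]^c=\vect{x}$ and $(\vect{x}\wedge c)\vee[\vect{x}]_c=\vect{x}$, which is all that is needed.
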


Let $\sigma$ be a permutation on $[n]$. Clearly, every comonotonic minitive (or comonotonic maxitive) function $f\colon L^{n}\rightarrow L$ is
nondecreasing on the standard simplex $L^n_\sigma$. The following lemma shows that this fact can be extended to the whole domain $L^n$.

\begin{lemma}\label{lemma:ComonotNonDec}
Let $L$ be a bounded chain. If $f\colon L^{n}\rightarrow L$ is comonotonic minitive or comonotonic maxitive, then it is nondecreasing.
Furthermore, every nondecreasing unary function is comonotonic minitive and comonotonic maxitive.
\end{lemma}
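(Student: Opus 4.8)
The plan is to establish the nontrivial implication (that comonotonic minitivity forces monotonicity; the maxitive case is order-dual) by reducing it to a purely combinatorial connectivity statement about $L^n$, and then to dispose of the converse unary part in one line.

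First I would record the one-step observation already noted just before the statement: if $\vect u\leqslant\vect w$ are comonotone, then $\vect u\wedge\vect w=\vect u$ gives $f(\vect u)=f(\vect u\wedge\vect w)=f(\vect u)\wedge f(\vect w)\leqslant f(\vect w)$, so a comonotonic minitive $f$ is nondecreasing on each simplex $L^n_\sigma$. Consequently, to prove $f(\vect a)\leqslant f(\vect b)$ for arbitrary $\vect a\leqslant\vect b$, it suffices to produce a finite chain $\vect a=\vect v_0\leqslant\vect v_1\leqslant\cdots\leqslant\vect v_M=\vect b$ in which every consecutive pair $\vect v_t,\vect v_{t+1}$ is comonotone; applying the one-step fact along the chain then yields the claim.

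The heart of the matter is therefore the following connectivity statement: any two comparable tuples $\vect a\leqslant\vect b$ can be joined by such a monotone, consecutively comonotone chain. This is where the main difficulty lies, since comparable tuples need not be comonotone and, as one checks on small examples, there need not even exist a single intermediate $\vect c$ with $\vect a\leqslant\vect c\leqslant\vect b$ that is comonotone with both $\vect a$ and $\vect b$; a genuinely multi-step chain is required. I would prove the statement by induction on $n$, the case $n=1$ being trivial since $L^1$ is a single simplex. For the inductive step, choose an index $m$ with $b_m=\bigvee_i b_i$, so that $b_m\geqslant a_i$ for every $i$. Step A: raise the $m$th coordinate of $\vect a$ from $a_m$ up to $b_m$, leaving the other coordinates fixed, inserting a breakpoint each time the rising value meets one of the fixed values $a_i$; two consecutive breakpoint tuples differ only in coordinate $m$ and no other coordinate lies strictly between the two successive values, so coordinate $m$ keeps the same position relative to every other coordinate, whence they are comonotone. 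This monotone subchain ends at the tuple $\vect a^{\ast}$ obtained from $\vect a$ by setting the $m$th coordinate to $b_m$. Step B: in both $\vect a^{\ast}$ and $\vect b$ the $m$th coordinate equals $b_m$ and is the greatest entry, so I apply the induction hypothesis to the two comparable $(n-1)$-tuples obtained by deleting coordinate $m$, and then lift the resulting monotone consecutively comonotone chain back by pinning coordinate $m$ at $b_m$; since $b_m$ dominates all remaining entries, appending $m$ at the top of any common sorting order shows the lifted consecutive tuples remain comonotone. Concatenating Steps A and B joins $\vect a$ to $\vect b$ as required.

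Finally, the converse is immediate: in $L^1$ all pairs are comonotone, and since $L$ is a chain a nondecreasing $f\colon L\to L$ satisfies $f(x\wedge x')=f(x)\wedge f(x')$ and $f(x\vee x')=f(x)\vee f(x')$ (because $\wedge,\vee$ reduce to $\min,\max$, which a monotone map preserves), so every nondecreasing unary function is both comonotonic minitive and comonotonic maxitive. The step I expect to require the most care in the writeup is verifying comonotonicity across the breakpoints in Step A when several fixed coordinates share a value, so that coordinate $m$ crosses several walls at once; this is handled by observing that the order of coordinate $m$ relative to each individual other coordinate is preserved, which already places the two tuples in a common closed simplex.
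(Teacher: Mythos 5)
Your argument is correct, and its skeleton coincides with the paper's: first the one-step fact that a comonotonic minitive (resp.\ maxitive) function is nondecreasing along comonotone comparable pairs, then a combinatorial claim that any $\vect{a}\leqslant\vect{b}$ in $L^n$ can be joined by a monotone chain whose consecutive members are comonotone, and finally the same one-line treatment of the unary converse. Where you genuinely differ is in how the chain is built. The paper sweeps the coordinates in the order $1,\ldots,n$, passing from $\vect{y}^{k-1}$ to $\vect{y}^{k}$ by replacing $x_k$ with $x'_k$, and when this pair fails to be comonotone it inserts a \emph{single} intermediate vector $\vect{y'}$ carrying one obstructing fixed value, claiming both resulting pairs are comonotone; you instead induct on $n$, raising a maximal coordinate of $\vect{b}$ first with a breakpoint at \emph{every} obstructing value, and then recursing on the remaining $n-1$ coordinates, lifting the shorter chain by pinning the maximal coordinate on top. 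Your construction is the more careful of the two: exactly as your closing remark anticipates, when two or more fixed values lie strictly between $x_k$ and $x'_k$, no single intermediate vector is comonotone with both endpoints --- for instance, for $\vect{y}^{k-1}=(0,3,7)$ and $\vect{y}^{k}=(10,3,7)$ in a chain containing $0<3<7<10$, the only candidates $(3,3,7)$ and $(7,3,7)$ each fail on one side --- so the paper's repair step, as literally written, is incomplete and must be iterated (harmlessly, since the number of obstructing values strictly decreases, but the paper does not say this). Your Step A performs all the needed insertions at once, and your Step B lifting is sound because every tuple in the lifted chain is dominated entrywise by the pinned value $b_m$, so a common sorting permutation of the reduced tuples extends by placing $m$ last. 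In short: the paper's route buys brevity, yours buys airtightness at the cost of a dimension induction; both use only the chain structure of $L$ and yield chains of quadratic length in $n$.
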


\begin{proof}
To see that the last claim holds just note that, on any chain $L$, we necessarily have $x\leqslant y$ or $x\geqslant y$ for every $x,y\in L$.
For instance, if $x\leqslant y$ then we have $f(x\wedge y)=f(x)=f(x)\wedge f(y)$ and $f(x\vee y)=f(y)=f(x)\vee f(y)$.

We now prove the first claim for comonotonic minitive functions. The case of comonotonic maxitive functions is shown similarly. Let  $f\colon
L^{n}\rightarrow L$ be a comonotonic minitive function and consider $\vect{x},\vect{x'}\in L^n$ such that $\vect{x}\leqslant \vect{x'}$ and
$\vect{x}\neq \vect{x'}$. We show that $f(\vect{x})\leqslant f(\vect{x'})$. For each $i\in [n]$, we denote by $\vect{y}^i$ the vector in $L^n$
whose $j$th component is $x'_j$ if $j\leqslant i$, and $x_j$ otherwise. As a matter of convenience, let $\vect{y}^0=\vect{x}$. Clearly, we have
$$
\vect{x}=\vect{y}^0\leqslant \vect{y}^1\leqslant \cdots \leqslant \vect{y}^{n-1}\leqslant \vect{y}^n=\vect{x'}.
$$
Let $k\in [n]$. If $\vect{y}^{k-1}$ and $\vect{y}^k$ are comonotonic, then $f(\vect{y}^{k-1})=f(\vect{y}^{k-1}\wedge\vect{y}^k)=f(\vect{y}^{k-1})\wedge f(\vect{y}^k)\leqslant f(\vect{y}^k)$. Otherwise, either there is $j>k$ such that $x_k<x_j<x'_k$ or there is $j<k$ such that $x_k<x'_j<x'_k$. Let $\vect{y'}$ be the vector obtained
from  $\vect{y}^k$ by replacing the $k$th component with $x_j$ if $j>k$ and with $x'_j$ if $j<k$. Then both $\{\vect{y}^{k-1},\vect{y'}\}$ and
$\{\vect{y'},\vect{y}^k\}$ constitute pairs of comonotonic vectors and, since $\vect{y}^{k-1}<\vect{y'}<\vect{y}^k $, it follows that
$f(\vect{y}^{k-1})\leqslant f(\vect{y'})\leqslant f(\vect{y}^k)$. Since the same argument holds for any $k\in [n]$, we have that
$f(\vect{x})\leqslant f(\vect{x'})$.
\end{proof}

We now have the following characterization of polynomial functions.

\begin{theorem}\label{theorem:WLP-comonot}
Let $L$ be a bounded chain and let $f\colon L^{n}\rightarrow L$ be a function. The following conditions are equivalent:
\begin{enumerate}
\item[(i)] $f$ is a polynomial function.

\item[(ii)] $f$ is weakly $\co{\mathcal{R}}_f$-min homogeneous and comonotonic maxitive.

\item[(iii)] $f$ is comonotonic minitive and weakly $\co{\mathcal{R}}_f$-max homogeneous.

\item[(iv)] $f$ is $\co{\mathcal{R}}_f$-idempotent, weakly horizontally $\co{\mathcal{R}}_f$-minitive, and comonotonic maxitive.

\item[(v)] $f$ is $\co{\mathcal{R}}_f$-idempotent, comonotonic minitive, and weakly horizontally $\co{\mathcal{R}}_f$-maxitive.

\item[(vi)] $f$ is $\co{\mathcal{R}}_f$-idempotent, comonotonic minitive, and comonotonic maxitive.
\end{enumerate}
\end{theorem}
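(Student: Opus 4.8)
The plan is to prove the necessity direction $(i)\Rightarrow(ii)$--$(vi)$ in one stroke and then to derive each of the implications $(ii)$--$(vi)\Rightarrow(i)$ by reduction to Theorem~\ref{theorem:WLP-WeakHomWeakHor}.

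For necessity, suppose $f$ is a polynomial function. By Theorem~\ref{theorem:WLP-WeakHomWeakHor} it is nondecreasing, weakly $\co{\mathcal{R}}_f$-min and max homogeneous, and weakly horizontally $\co{\mathcal{R}}_f$-minitive and maxitive, while Proposition~\ref{prop:Hom-Id-46}$(ii)$ supplies $\co{\mathcal{R}}_f$-idempotency. Thus the only properties still to be checked are comonotonic minitivity and maxitivity. To establish the latter I would fix comonotonic vectors $\vect{x},\vect{x'}\in L^n_\sigma$, observe that $\vect{x}\vee\vect{x'}\in L^n_\sigma$ as well, and apply the standard-simplex representation of Proposition~\ref{SimplexDNF}; since the $\sigma(i)$th entry of $\vect{x}\vee\vect{x'}$ is $x_{\sigma(i)}\vee x'_{\sigma(i)}$, distributing $\wedge$ over $\vee$ inside the join $\bigvee_{i}\big(\alpha_f(S^{\uparrow}_\sigma(i))\wedge(x_{\sigma(i)}\vee x'_{\sigma(i)})\big)$ splits it into $f(\vect{x})\vee f(\vect{x'})$. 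Comonotonic minitivity follows dually from the meet form of the same representation. This is the one genuinely new computation, and I expect it to be the main (though modest) obstacle of the proof.

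For sufficiency I would run a uniform recipe for each of $(ii)$--$(vi)$. First, each of these conditions contains a comonotonic minitivity or maxitivity hypothesis, so Lemma~\ref{lemma:ComonotNonDec} yields that $f$ is nondecreasing. Next I secure $\co{\mathcal{R}}_f$-idempotency: it is assumed outright in $(iv)$--$(vi)$, and in $(ii)$ and $(iii)$ it follows from the weak homogeneity hypothesis via Lemma~\ref{lemma:WeaklyMinMaxRangeIdem}. With idempotency in hand, Lemma~\ref{lemma:ComonotHomog} upgrades a comonotonic minitivity (resp.\ maxitivity) hypothesis to full $\co{\mathcal{R}}_f$-min (resp.\ max) homogeneity, which is stronger than the weak versions appearing in Theorem~\ref{theorem:WLP-WeakHomWeakHor}.

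It then remains to match the resulting bundle of properties against the conditions of Theorem~\ref{theorem:WLP-WeakHomWeakHor}. Concretely, conditions $(ii)$, $(iii)$, and $(vi)$ each reduce to condition $(ii)$ of that theorem (nondecreasing together with weak $\co{\mathcal{R}}_f$-min and max homogeneity); condition $(iv)$ here reduces to condition $(iv)$ there, the comonotonic maxitivity becoming $\co{\mathcal{R}}_f$-max homogeneity while the weak horizontal minitivity is kept as is; and condition $(v)$ here reduces to condition $(iii)$ there. In each case Theorem~\ref{theorem:WLP-WeakHomWeakHor} then certifies that $f$ is a polynomial function, which closes the cycle of equivalences.
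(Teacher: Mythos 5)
Your proposal is correct and takes essentially the same route as the paper: the same distributivity computation on the standard-simplex representation of Proposition~\ref{SimplexDNF} yields comonotonic minitivity and maxitivity of polynomial functions, and the same lemmas (Lemmas~\ref{lemma:ComonotNonDec}, \ref{lemma:ComonotHomog}, and \ref{lemma:WeaklyMinMaxRangeIdem}) reduce each of $(ii)$--$(vi)$ to the weak-homogeneity characterization. The only difference is organizational: the paper economizes by proving the cycle $(i)\Rightarrow(vi)\Rightarrow(iv),(v)\Rightarrow(ii),(iii)\Rightarrow(i)$, closing with Theorem~\ref{theorem:WLP-WeakHom}, whereas you prove $(i)\Rightarrow$ each condition and each condition $\Rightarrow(i)$ separately via Theorem~\ref{theorem:WLP-WeakHomWeakHor}; both arrangements are valid and rest on identical ingredients.
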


\begin{proof}
Using distributivity and the first equality in (\ref{DNF1234}) of Proposition~\ref{SimplexDNF}, it can be easily verified that every polynomial
function is comonotonic maxitive. By the dual argument, it follows that every polynomial function is also comonotonic minitive. Thus we have
$(i)\Rightarrow (vi)$. The implications $(vi)\Rightarrow (v)$ and $(vi)\Rightarrow (iv)$ immediately follow from Lemma~\ref{lemma:ComonotHomog}.
Then, the implications $(iv)\Rightarrow (ii)$ and $(v)\Rightarrow (iii)$ immediately follow from Lemmas~\ref{lemma:Weak15682} and
\ref{lemma:ComonotNonDec}. Finally, the implications $(ii)\Rightarrow (i)$ and $(iii)\Rightarrow (i)$ follow from
Lemmas~\ref{lemma:WeaklyMinMaxRangeIdem}, \ref{lemma:ComonotHomog}, \ref{lemma:ComonotNonDec}, and Theorem~\ref{theorem:WLP-WeakHom}.
\end{proof}

\begin{remark}
\begin{enumerate}
\item[(i)] As already observed in the remark following Theorem~\ref{theorem:WLP-WeakHomWeakHor}, the weak horizontal
$\co{\mathcal{R}}_f$-minitivity (resp.\ weak horizontal $\co{\mathcal{R}}_f$-maxitivity) can be replaced with weak horizontal $L$-minitivity
(resp.\ weak horizontal $L$-maxitivity) in the assertions $(iv)$--$(v)$ of Theorem~\ref{theorem:WLP-comonot}.

\item[(ii)] The condition requiring  $\co{\mathcal{R}}_f$-idempotency is necessary in conditions $(iv)$--$(vi)$ of
Theorem~\ref{theorem:WLP-comonot}. For instance, let $L$ be the unit interval $[0,1]$. Clearly, the unary function $f(x)=x^2$ is nondecreasing
and thus comonotonic minitive and comonotonic maxitive. By Lemma~\ref{lemma:ComonotHomog}, it is also horizontally $\co{\mathcal{R}}_f$-minitive
and horizontally $\co{\mathcal{R}}_f$-maxitive. However, it is not a polynomial function.

\item[(iii)] The concept of comonotonic vectors appeared as early as 1952 in Hardy et al.~\cite{HarLitPol52}. Comonotonic minitivity and
maxitivity were introduced in the context of Sugeno integrals by de Campos et al.~\cite{deCLamMor91}. An interpretation of these properties was
given by Ralescu and Ralescu~\cite{RalRal97} in the framework of aggregation of fuzzy subsets.
\end{enumerate}
\end{remark}

\section{Some special classes of polynomial functions}

In this final section, we consider two noteworthy subclasses of polynomial functions, namely, those of discrete Sugeno integrals and of term
functions, and provide their characterizations. Further subclasses, such as those of symmetric polynomial functions and weighted minimum and
maximum functions, were investigated and characterized in \cite{CouMar1}.

\subsection{Discrete Sugeno integrals}

A function $f\colon L^{n}\rightarrow L$ is said to be \emph{idempotent} if it is $L$-idempotent.

\begin{fact}\label{sugeno}
A polynomial function is $\{0,1\}$-idempotent if and only if it is idempotent.
\end{fact}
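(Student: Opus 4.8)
The backward implication is immediate: idempotency means $L$-idempotency, and since $\{0,1\}\subseteq L$, every idempotent function is in particular $\{0,1\}$-idempotent. The content lies in the forward direction, and the plan is to show that under $\{0,1\}$-idempotency the convex range of $f$ fills the entire chain $L$.

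First I would invoke Proposition~\ref{prop:Hom-Id-46}$(ii)$, which guarantees that any polynomial function $f$ is $\co{\mathcal{R}}_f$-idempotent and has a convex range. Since polynomial functions are nondecreasing (for instance by Theorem~\ref{mainChar}), we have $\co{\mathcal{R}}_f=[f(\vect{0}),f(\vect{1})]$, as noted in {\S}2. Assuming $f$ is $\{0,1\}$-idempotent gives $f(\vect{0})=0$ and $f(\vect{1})=1$, whence $\co{\mathcal{R}}_f=[0,1]=L$. Then $\co{\mathcal{R}}_f$-idempotency is exactly $L$-idempotency, that is, idempotency, and the proof is complete.

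Alternatively, one could argue directly from the DNF representation $f(\vect{x})=\bigvee_{I\subseteq[n]}(\alpha(I)\wedge\bigwedge_{i\in I}x_i)$ of Proposition~\ref{DNF}: evaluating on the diagonal yields $f(\vect{0})=\alpha(\varnothing)$ and $f(\vect{1})=\bigvee_{I\subseteq[n]}\alpha(I)$, so $\{0,1\}$-idempotency forces $\alpha(\varnothing)=0$ and $\bigvee_{I}\alpha(I)=1$; distributivity then gives $f(c,\ldots,c)=\big(\bigvee_{I}\alpha(I)\big)\wedge c=c$ for every $c\in L$. There is no genuine obstacle here---the only point worth stressing is that the equivalence is specific to \emph{polynomial} functions, since a general nondecreasing $\{0,1\}$-idempotent function need not be idempotent, and it is precisely the $\co{\mathcal{R}}_f$-idempotency supplied by Proposition~\ref{prop:Hom-Id-46}$(ii)$ (equivalently, the distributive-lattice computation above) that upgrades idempotency at the two endpoints to idempotency throughout $L$.
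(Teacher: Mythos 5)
Your proposal is correct; note, however, that the paper itself states Fact~\ref{sugeno} without any proof (it is offered as an immediate observation), so there is no argument of the authors' to compare against --- your write-up simply supplies the missing justification, and does so validly in two independent ways. Of the two, the DNF computation is the more self-contained: evaluating $f(\vect{x})=\bigvee_{I\subseteq[n]}\bigl(\alpha(I)\wedge\bigwedge_{i\in I}x_i\bigr)$ on the diagonal and using distributivity really does reduce the claim to $\alpha(\varnothing)=0$ and $\bigvee_{I}\alpha(I)=1$, which is exactly what $\{0,1\}$-idempotency gives; moreover this argument works over any bounded distributive lattice, not just chains. Your first argument, via the $\co{\mathcal{R}}_f$-idempotency of Proposition~\ref{prop:Hom-Id-46}$(ii)$ together with $\co{\mathcal{R}}_f=[f(\vect{0}),f(\vect{1})]=[0,1]=L$, is also sound; the only blemish is citing Theorem~\ref{mainChar} for nondecreasingness, since that theorem is stated for chains only --- nondecreasingness of polynomial functions holds over every bounded distributive lattice by a trivial induction on the construction rules (i)--(ii) of {\S}1, and invoking that instead keeps the argument at the natural level of generality of the Fact.
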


In \cite{Marc}, $\{0,1\}$-idempotent polynomial functions are referred to as \emph{discrete Sugeno integrals}. They coincide
exactly with those functions $\mathcal{S}_{\mu}\colon L^{n}\rightarrow L$ for which there is a fuzzy measure $\mu$ such that
$$
\mathcal{S}_{\mu}(\vect{x})=\bigvee_{I\subseteq [n]}\big(\mu(I)\wedge \bigwedge_{i\in I} x_i\big).
$$
(A \emph{fuzzy measure} $\mu$ is simply a set function $\mu \colon 2^{[n]}\rightarrow L$ satisfying $\mu(I)\leqslant \mu(I')$ whenever
$I\subseteq I'$, and $\mu(\varnothing)=0$ and $\mu([n])=1$.)

For idempotent polynomial functions, Proposition~\ref{SimplexDNF} reduces to the following statement.

\begin{proposition}
Let $L$ be a bounded chain and let $f\colon L^{n}\rightarrow L$ be a function. The following conditions are equivalent:
\begin{enumerate}
\item[(i)] $f$ is a discrete Sugeno integral.

\item[(ii)] For any permutation $\sigma$ on $[n]$ and every $\vect{x}\in L^n_\sigma$, we have
\begin{eqnarray*}
 f(\vect{x}) &=& \bigvee_{i\in [n]}\big (\alpha_f(S^{\uparrow}_\sigma (i))\wedge x_{\sigma(i)}\big )
= \bigwedge_{i\in [n]}\big (\alpha_f(S^{\uparrow}_\sigma (i+1))\vee x_{\sigma(i)}\big )\\
&=& \median\big(x_1,\ldots ,x_n, \alpha_f(S^{\uparrow}_\sigma (2)),\ldots ,\alpha_f(S^{\uparrow}_\sigma (n))\big).
\end{eqnarray*}

\item[(iii)] For any permutation $\sigma$ on $[n]$ and every $\vect{x}\in L^n_\sigma$, we have
\begin{eqnarray*}
f(\vect{x}) &=& \bigvee_{i\in [n]}\big (\beta_f(S^{\downarrow}_\sigma (i-1))\wedge x_{\sigma(i)}\big ) =
\bigwedge_{i\in [n]}\big (\beta_f(S^{\downarrow}_\sigma (i))\wedge x_{\sigma(i)}\big )\\
&=& \median\big(x_1,\ldots ,x_n, \beta_f(S^{\downarrow}_\sigma (1)),\ldots ,\beta_f(S^{\downarrow}_\sigma (n-1))\big).
\end{eqnarray*} 
\end{enumerate}
\end{proposition}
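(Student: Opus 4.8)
The plan is to obtain this proposition as the specialization of Proposition~\ref{SimplexDNF} to the subclass of discrete Sugeno integrals, exploiting Fact~\ref{sugeno}, which tells us that these are precisely the polynomial functions with $f(\vect{0})=0$ and $f(\vect{1})=1$. The two observations that drive everything are that $\alpha_f(\varnothing)=f(\vect{0})$ and $\alpha_f([n])=f(\vect{1})$, together with $S^{\uparrow}_\sigma(1)=[n]$ and $S^{\uparrow}_\sigma(n+1)=\varnothing$; hence for a Sugeno integral the two boundary coefficients are $\alpha_f(S^{\uparrow}_\sigma(1))=1$ and $\alpha_f(S^{\uparrow}_\sigma(n+1))=0$.

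For $(i)\Rightarrow(ii)$, I would start from the three equalities of Proposition~\ref{SimplexDNF}$(ii)$ and delete the boundary terms. In the join $\bigvee_{i\in[n+1]}$ the $i=n+1$ summand is $\alpha_f(\varnothing)\wedge x_{\sigma(n+1)}=0\wedge 1=0$ and may be discarded; in the meet $\bigwedge_{i\in[n+1]}$ the $i=1$ factor is $\alpha_f([n])\vee x_{\sigma(0)}=1\vee 0=1$ and may be discarded, after which the reindexing $i\mapsto i-1$ produces the form in $(ii)$; and in the $(2n+1)$-ary median the two extreme arguments $\alpha_f(S^{\uparrow}_\sigma(1))=1$ and $\alpha_f(S^{\uparrow}_\sigma(n+1))=0$ are a global maximum and a global minimum of $L$, whose removal leaves the value of an odd-length median unchanged (since $\median(0,y_1,\ldots,y_{2n-1},1)=\median(y_1,\ldots,y_{2n-1})$). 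This yields exactly the three displayed formulas of $(ii)$.

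For $(ii)\Rightarrow(i)$ I would run this in reverse, but first read the boundary idempotency off the hypothesis itself: evaluating the join form at $\vect{x}=\vect{0}$ gives $f(\vect{0})=0$, and evaluating the meet form at $\vect{x}=\vect{1}$ gives $f(\vect{1})=1$. Consequently $\alpha_f(\varnothing)=0$ and $\alpha_f([n])=1$, so re-inserting the $i=n+1$ summand $0\wedge 1=0$ into the join recovers precisely the expression $\bigvee_{i\in[n+1]}\big(\alpha_f(S^{\uparrow}_\sigma(i))\wedge x_{\sigma(i)}\big)$ of Proposition~\ref{SimplexDNF}$(ii)$; hence $f$ is a polynomial function. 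Being moreover $\{0,1\}$-idempotent, $f$ is a discrete Sugeno integral by Fact~\ref{sugeno}. Finally, $(ii)\Leftrightarrow(iii)$ is the same substitution used in Proposition~\ref{SimplexDNF}: since $\beta_f(I)=\alpha_f([n]\setminus I)$ and $[n]\setminus S^{\downarrow}_\sigma(i-1)=S^{\uparrow}_\sigma(i)$, we have $\beta_f(S^{\downarrow}_\sigma(i-1))=\alpha_f(S^{\uparrow}_\sigma(i))$, turning the $\beta_f$-formulas of $(iii)$ into the $\alpha_f$-formulas of $(ii)$ term by term.

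The only genuinely non-bookkeeping step is the median reduction, so I would isolate the elementary fact that deleting a matched pair consisting of the bottom and the top of $L$ from an odd number of arguments does not change the median; everything else is the substitution $\alpha_f(\varnothing)=0$, $\alpha_f([n])=1$ and a reindexing. I expect the point requiring the most care to be the $(ii)\Rightarrow(i)$ direction, where one must not presume idempotency a priori but instead extract $f(\vect{0})=0$ and $f(\vect{1})=1$ from the two one-sided evaluations before re-inflating the join to match Proposition~\ref{SimplexDNF}.
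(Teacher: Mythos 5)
Your proposal is correct and is essentially the paper's own (implicit) argument: the paper gives no separate proof, stating only that Proposition~\ref{SimplexDNF} ``reduces to'' this statement for idempotent polynomial functions, and your boundary-term deletion (using $\alpha_f(\varnothing)=f(\vect{0})=0$, $\alpha_f([n])=f(\vect{1})=1$) together with the reverse direction extracting $\{0,1\}$-idempotency from the formulas and invoking Fact~\ref{sugeno} is exactly that reduction, carefully filled in. (The only mismatch you gloss over -- the $\wedge$ versus $\vee$ between $\beta_f(S^{\downarrow}_\sigma(i))$ and $x_{\sigma(i)}$ in the meet form of $(iii)$ -- is a typo already present in the paper's statements, not a gap in your argument.)
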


The following proposition shows how polynomial functions relate to Sugeno integrals; see \cite[Proposition~12]{Marc}.

\begin{proposition}\label{prop:sug}
For any polynomial function $f\colon L^{n}\rightarrow L$ there is a fuzzy measure $\mu \colon 2^{[n]}\rightarrow L$ such that
$f=\langle\mathcal{S}_{\mu}\rangle_f$.
\end{proposition}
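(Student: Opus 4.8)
The plan is to exhibit the fuzzy measure explicitly and then verify the identity $f(\vect{x})=\median\big(f(\vect{0}),\mathcal{S}_{\mu}(\vect{x}),f(\vect{1})\big)$ for every $\vect{x}\in L^n$, which is precisely the assertion $f=\langle\mathcal{S}_{\mu}\rangle_f$. Write $a=f(\vect{0})$ and $b=f(\vect{1})$, so that $a\leqslant b$ since polynomial functions are nondecreasing, and recall from Proposition~\ref{DNF} together with Proposition~\ref{prop:Uniqueness} that $f$ admits the canonical disjunctive normal form $f(\vect{x})=\bigvee_{I\subseteq[n]}\big(\alpha_f(I)\wedge\bigwedge_{i\in I}x_i\big)$ with the (isotone) coefficient map $\alpha_f(I)=f(\vect{e}_I)$; in particular $\alpha_f(\varnothing)=a$ and $\alpha_f([n])=b$.

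First I would define $\mu\colon 2^{[n]}\to L$ by $\mu(\varnothing)=0$, $\mu([n])=1$, and $\mu(I)=\alpha_f(I)$ for every $I$ with $\varnothing\subsetneq I\subsetneq[n]$, and then check that $\mu$ is a fuzzy measure. The boundary values $\mu(\varnothing)=0$ and $\mu([n])=1$ hold by construction; isotonicity follows because $\alpha_f$ is isotone (as $f$ is nondecreasing and $\vect{e}_I\leqslant\vect{e}_J$ whenever $I\subseteq J$), while $0$ and $1$ are the bottom and top of $L$, so overriding the values at $\varnothing$ and $[n]$ cannot destroy monotonicity.

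Next I would compute the right-hand side. Since $\median(a,y,b)=(a\vee y)\wedge b$ whenever $a\leqslant b$, distributivity gives $\median\big(a,\mathcal{S}_{\mu}(\vect{x}),b\big)=(a\wedge b)\vee\bigvee_{I\subseteq[n]}\big(\mu(I)\wedge b\wedge\bigwedge_{i\in I}x_i\big)$. Here $a\wedge b=a$; the summand $I=\varnothing$ vanishes, the summand $I=[n]$ equals $b\wedge\bigwedge_{i\in[n]}x_i=\alpha_f([n])\wedge\bigwedge_{i\in[n]}x_i$, and for $\varnothing\subsetneq I\subsetneq[n]$ the bound $\alpha_f(I)=f(\vect{e}_I)\leqslant f(\vect{1})=b$ yields $\mu(I)\wedge b=\alpha_f(I)$. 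Collecting these contributions reproduces exactly $\bigvee_{I\subseteq[n]}\big(\alpha_f(I)\wedge\bigwedge_{i\in I}x_i\big)=f(\vect{x})$, which closes the argument.

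The only delicate point is this last distributive computation, but it is genuinely routine once the two inequalities $a\leqslant b$ and $\alpha_f(I)\leqslant b$ are in hand, so I do not expect a real obstacle. I would finally remark that, since $\mathcal{S}_{\mu}$ is $\{0,1\}$-idempotent and hence idempotent by Fact~\ref{sugeno}, it commutes with the clamp $\langle\cdot\rangle_f$ through formula~(\ref{Min-Max-Hom-Med}); thus the median may equivalently be applied to the argument of $\mathcal{S}_{\mu}$, and the degenerate case $n=1$ (where the middle join is empty) is subsumed by the same computation.
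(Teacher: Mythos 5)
Your proof is correct. Note, however, that the paper itself contains no proof of Proposition~\ref{prop:sug}: it is quoted from \cite[Proposition~12]{Marc}. Your proposal thus supplies a self-contained argument where the paper only cites, and the computation checks out: resetting $\alpha_f$ at $\varnothing$ and $[n]$ to $0$ and $1$ yields a genuine fuzzy measure (isotonicity is preserved because $0$ and $1$ are the bounds of $L$), and since $a=f(\vect{0})\leqslant b=f(\vect{1})$, one has $\median(a,y,b)=(a\wedge b)\vee(y\wedge b)$, so distributing $b$ over the finite join defining $\mathcal{S}_{\mu}(\vect{x})$ turns each summand of $\mu$ back into the corresponding summand of $\alpha_f$: the $\varnothing$-term is restored by $a\wedge b=a$, the $[n]$-term by $1\wedge b=b=\alpha_f([n])$, and the intermediate terms by $\mu(I)\wedge b=\alpha_f(I)$, which holds since $\alpha_f(I)=f(\vect{e}_I)\leqslant f(\vect{1})=b$. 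This reassembles the DNF of $f$, i.e., $f=\langle\mathcal{S}_{\mu}\rangle_f$.

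Two minor points. First, you invoke Proposition~\ref{prop:Uniqueness} to justify $\alpha_f\in\mathrm{DNF}(f)$, but that proposition is stated only for bounded chains, whereas Proposition~\ref{prop:sug} (as in \cite{Marc}) holds over any bounded distributive lattice. The fact you actually need is general and follows from Corollaries~\ref{Ext1} and \ref{Ext2}: the polynomial function $\vect{x}\mapsto\bigvee_{I\subseteq[n]}\big(f(\vect{e}_I)\wedge\bigwedge_{i\in I}x_i\big)$ agrees with $f$ on $\{0,1\}^n$ (using that $\alpha_f$ is isotone), hence coincides with $f$ everywhere. With that substitution your argument proves the proposition in full generality, not just for chains. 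Second, your closing paragraph (idempotency of $\mathcal{S}_{\mu}$ and pushing the median inside via (\ref{Min-Max-Hom-Med})) is a true but unnecessary remark; the case $n=1$ is already subsumed by your main computation, so the proof can end one paragraph earlier.
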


We say that a function $f\colon L^{n}\rightarrow L$ is \emph{Boolean min homogeneous} (resp.\ \emph{Boolean max homogeneous}) if (\ref{minHom})
(resp.\ (\ref{maxHom})) holds for every $\vect{x}\in \{0,1\}^n$ and every $c\in L$. Note that every weakly $L$-min homogeneous (resp.\ weakly
$L$-max homogeneous) function is Boolean min homogeneous (resp.\ Boolean max homogeneous).

\begin{lemma}\label{lemma:4985}
If a function $f\colon L^n\to L$ is Boolean min homogeneous and Boolean max homogeneous, then it is idempotent.
\end{lemma}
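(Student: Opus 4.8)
The plan is to pin down the values of $f$ at the two extreme constant tuples $\vect{0}$ and $\vect{1}$ first, and then bootstrap from these to all constant tuples. First I would apply Boolean min homogeneity (\ref{minHom}) to the Boolean vector $\vect{x}=\vect{1}$ with $c=0$: since $\vect{1}\wedge 0=\vect{0}$, this yields $f(\vect{0})=f(\vect{1})\wedge 0=0$. Dually, applying Boolean max homogeneity (\ref{maxHom}) to $\vect{x}=\vect{0}$ with $c=1$, and using $\vect{0}\vee 1=\vect{1}$, gives $f(\vect{1})=f(\vect{0})\vee 1=1$.

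With these boundary values in hand, the main step is immediate. For an arbitrary $c\in L$, apply Boolean min homogeneity to $\vect{x}=\vect{1}$ (which lies in $\{0,1\}^n$): since $\vect{1}\wedge c=(c,\ldots,c)$, we obtain $f(c,\ldots,c)=f(\vect{1}\wedge c)=f(\vect{1})\wedge c=1\wedge c=c$. Hence $f$ is $L$-idempotent, i.e., idempotent. One could equally well conclude via Boolean max homogeneity at $\vect{x}=\vect{0}$ together with $f(\vect{0})=0$, computing $f(c,\ldots,c)=f(\vect{0}\vee c)=f(\vect{0})\vee c=c$.

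The only point worth flagging is why the fixed-point argument of Lemma~\ref{lemma:WeaklyMinMaxIdem} cannot be reused verbatim: there idempotency was read off by feeding the constant tuple $(c,\ldots,c)$ itself into the homogeneity equations, which is legitimate precisely because weak homogeneity covers all of $L_n^{(0,2)}$. Here homogeneity is assumed only on $\{0,1\}^n$, and $(c,\ldots,c)$ is Boolean only when $c\in\{0,1\}$. The argument must therefore be routed through the genuinely Boolean vectors $\vect{0}$ and $\vect{1}$, using the arbitrary element $c$ as the homogeneity parameter rather than as the base vector. Beyond this observation no real obstacle arises, and the proof is short.
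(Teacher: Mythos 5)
Your proof is correct and takes essentially the same route as the paper's: both pin down $f(c,\ldots,c)$ by applying Boolean min homogeneity at $\vect{1}$ and Boolean max homogeneity at $\vect{0}$, exploiting the identity $\vect{1}\wedge c=(c,\ldots,c)=\vect{0}\vee c$. The only cosmetic difference is that the paper never computes the boundary values $f(\vect{0})=0$ and $f(\vect{1})=1$, instead sandwiching in one chain, $c\leqslant f(\vect{0})\vee c=f(\vect{0}\vee c)=f(\vect{1}\wedge c)=f(\vect{1})\wedge c\leqslant c$.
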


\begin{proof}
For any $c\in L$, we have $c\leqslant f(\vect{0})\vee c=f(\vect{0}\vee c)=f(\vect{1}\wedge c)=f(\vect{1})\wedge c\leqslant c$.
\end{proof}

The following result provides a variant of Theorem~\ref{theorem:WLP-WeakHom}.

\begin{theorem}\label{theorem:Sug-WeakHom}
Let $L$ be a bounded chain. A function $f\colon L^{n}\rightarrow L$ is a discrete Sugeno integral if and only if it is nondecreasing, Boolean
min homogeneous, and Boolean max homogeneous.
\end{theorem}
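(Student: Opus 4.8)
The plan is to prove the equivalence by reducing it to the already-established characterization of general polynomial functions in Theorem~\ref{theorem:WLP-WeakHom}, together with the Boolean idempotency Lemma~\ref{lemma:4985}. First I would dispatch the necessity direction. If $f$ is a discrete Sugeno integral, then it is a polynomial function, so by Proposition~\ref{prop:Hom-Id-46}~$(ii)$ it is nondecreasing and $\co{\mathcal{R}}_f$-min and $\co{\mathcal{R}}_f$-max homogeneous; moreover, being $\{0,1\}$-idempotent it is idempotent by Fact~\ref{sugeno}, so $\co{\mathcal{R}}_f=[f(\vect{0}),f(\vect{1})]=[0,1]=L$. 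Hence the homogeneity holds for every $c\in L$, and in particular the defining identities (\ref{minHom}) and (\ref{maxHom}) hold for every $\vect{x}\in\{0,1\}^n$, giving Boolean min and max homogeneity.

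For sufficiency, suppose $f$ is nondecreasing, Boolean min homogeneous, and Boolean max homogeneous. The key observation is that Lemma~\ref{lemma:4985} immediately yields that $f$ is idempotent, whence $f(\vect{0})=0$ and $f(\vect{1})=1$, so $\co{\mathcal{R}}_f=[f(\vect{0}),f(\vect{1})]=L$. Since $\co{\mathcal{R}}_f=L$, the weak $\co{\mathcal{R}}_f$-min (resp.\ max) homogeneity required in Theorem~\ref{theorem:WLP-WeakHom} asks precisely that (\ref{minHom}) (resp.\ (\ref{maxHom})) hold for every $c\in L$ and every $\vect{x}\in L_n^{(0,2)}$. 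The main point to verify is that Boolean homogeneity on $\{0,1\}^n$ propagates to all of $L_n^{(0,2)}$. To see this, write a generic $\vect{x}\in L_n^{(0,2)}$ as $\vect{x}=\median(a,\vect{e},b)$ with $\vect{e}\in\{0,1\}^n$ and $a\leqslant b$ in $L$; since $f$ is idempotent with $\co{\mathcal{R}}_f=L$, formula (\ref{Min-Max-Hom-Med}) specialized to Boolean vectors gives $f(\median(a,\vect{e},b))=\median(a,f(\vect{e}),b)$, which is exactly the statement that $f$ is both weakly $L$-min and weakly $L$-max homogeneous.

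With these two weak homogeneity properties in hand, Theorem~\ref{theorem:WLP-WeakHom} applies directly and shows that $f$ is a polynomial function. Being idempotent, it is in particular $\{0,1\}$-idempotent, so by Fact~\ref{sugeno} it is a discrete Sugeno integral, completing the argument. The step I expect to require the most care is the propagation from Boolean homogeneity to weak $L$-homogeneity on $L_n^{(0,2)}$: one must argue that the identity (\ref{Min-Max-Hom-Med}), originally derived in \cite{CouMar1} for $\vect{x}\in L^n$ under full $S$-homogeneity, can be recovered here from Boolean homogeneity alone once idempotency forces $\co{\mathcal{R}}_f=L$. Concretely, for $\vect{x}=\median(a,\vect{e},b)$ the computation $f(\vect{x}\wedge c)=f(\median(a\wedge c,\vect{e},b\wedge c))$ reduces, by Boolean homogeneity applied to the three threshold levels, to $\median(a,f(\vect{e}),b)\wedge c$, and dually for the join; handling the three relative positions of $c$ with respect to $a$ and $b$ is the routine but essential case analysis.
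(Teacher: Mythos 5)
Your overall strategy is viable and genuinely different from the paper's, but as written it has a gap at exactly the step you flagged as delicate. The necessity direction is fine, and the reduction plan for sufficiency (Lemma~\ref{lemma:4985} gives idempotency, hence $\co{\mathcal{R}}_f=[f(\vect{0}),f(\vect{1})]=L$ by monotonicity, then upgrade Boolean homogeneity to weak $L$-homogeneity on $L_n^{(0,2)}$ and invoke Theorem~\ref{theorem:WLP-WeakHom} as a black box) is sound in outline. The problem is the propagation step itself. Your appeal to formula (\ref{Min-Max-Hom-Med}) ``specialized to Boolean vectors'' is circular: that identity is established in the paper as a consequence of \emph{full} $S$-min and $S$-max homogeneity---precisely what you are trying to prove---and its derivation applies (\ref{minHom}) and (\ref{maxHom}) to vectors such as $\vect{x}\vee r$, which are not Boolean even when $\vect{x}$ is. Your fallback sketch (``Boolean homogeneity applied to the three threshold levels'', plus a case analysis on the position of $c$ relative to $a$ and $b$) suffers from the same defect: the vectors $\median(a\wedge c,\vect{e},b\wedge c)$ to which you would need to apply homogeneity are not Boolean, so the hypotheses never apply to them directly, and no case analysis on $c$ alone removes this obstruction.

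The missing idea is a sandwich argument via monotonicity. For $\vect{e}\in\{0,1\}^n$ and $a\leqslant b$ in $L$ one has $\vect{e}\wedge b\leqslant\median(a,\vect{e},b)\leqslant\vect{e}\vee a$, as well as $(a,\ldots,a)\leqslant\median(a,\vect{e},b)\leqslant(b,\ldots,b)$; hence, by nondecreasing monotonicity and the Boolean hypotheses (note $f(\vect{0}\vee a)=f(\vect{0})\vee a\geqslant a$ and $f(\vect{1}\wedge b)=f(\vect{1})\wedge b\leqslant b$),
\[
\big(f(\vect{e})\wedge b\big)\vee a \;\leqslant\; f\big(\median(a,\vect{e},b)\big)\;\leqslant\;\big(f(\vect{e})\vee a\big)\wedge b,
\]
and both outer expressions equal $\median(a,f(\vect{e}),b)$ since $a\leqslant b$. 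This yields $f(\median(a,\vect{e},b))=\median(a,f(\vect{e}),b)$ for all $a\leqslant b$, and weak $L$-min and $L$-max homogeneity then follow from the lattice identities $\median(a,y,b)\wedge c=\median(a\wedge c,y,b\wedge c)$ and $\median(a,y,b)\vee c=\median(a\vee c,y,b\vee c)$. With this patch your proof is complete. The paper avoids the issue altogether: for sufficiency it simply re-runs the proof of Theorem~\ref{theorem:WLP-WeakHom}, observing that the only vectors to which homogeneity is ever applied there are of the form $\vect{e}_I\wedge c$ and $\vect{e}\vee c$---exactly the scope of the Boolean hypotheses, with the $\langle\cdot\rangle_f$ brackets rendered trivial by idempotency---thereby landing directly on the DNF representation $f(\vect{x})=\bigvee_{I\subseteq[n]}\big(f(\vect{e}_I)\wedge\bigwedge_{i\in I}x_i\big)$. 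So the paper's route needs no new lemma but repeats an argument, whereas yours, once repaired by the sandwich step, reuses Theorem~\ref{theorem:WLP-WeakHom} as a black box at the cost of proving the propagation lemma.
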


\begin{proof}
Since any discrete Sugeno integral is comonotonic minitive and idempotent, we have $f(\vect{e}\wedge c)=f(\vect{e})\wedge
f(c,\ldots,c)=f(\vect{e})\wedge c$ for every $\vect{e}\in \{0,1\}^n$ and every $c\in L$. Thus, any discrete Sugeno integral is Boolean min
homogeneous. We can prove similarly that it is also Boolean max homogeneous.

To show that the conditions are sufficient, let $\vect{x}\in L^n$. By nondecreasing monotonicity and binary min homogeneity, for every
$I\subseteq [n]$ we have
$$
f(\vect{x}) \geqslant f\big(\vect{e}_I \wedge (\bigwedge_{i\in I}x_i)\big) = f(\vect{e}_I) \wedge \big(\bigwedge_{i\in I}x_i\big)
$$
and thus $f(\vect{x})\geqslant \bigvee_{I\in [n]} (f(\vect{e}_I) \wedge (\bigwedge_{i\in I}x_i))$. The converse inequality can be obtained by
following exactly the same steps as in the proof of Theorem~\ref{theorem:WLP-WeakHom}.
%
%
\end{proof}

%
%
%
%
%
%
%
%
%
%
%
%

\begin{remark}
\begin{enumerate}
\item[(i)] Theorem~\ref{theorem:Sug-WeakHom} as well as the characterization of the discrete Sugeno integrals obtained by combining
$\{0,1\}$-idempotency with $(vi)$ in Theorem~\ref{theorem:WLP-comonot} were presented in the case of real variables in \cite[{\S}4.3]{Mar98}; see also \cite{Mar00c}.

\item[(ii)] Even though Theorem~\ref{theorem:Sug-WeakHom} can be derived from condition $(ii)$ of Theorem~\ref{theorem:WLP-WeakHomWeakHor} by
simply modifying the two homogeneity properties, to proceed similarly with conditions $(iii)$ and $(iv)$, it is necessary to add the conditions
of $\{1\}$-idempotency and $\{0\}$-idempotency, respectively (and similarly with conditions $(ii)$ and $(iii)$ of
Theorem~\ref{theorem:WLP-comonot}). To see this, let $L$ be a bounded chain with at least three elements and consider the unary functions
$f(x)=x\wedge d$ and $g(x)=x\vee d$, where $d\in L\setminus\{0,1\}$. Clearly, $f$ is $L$-min homogeneous and horizontally $L$-maxitive and $g$
is $L$-max homogeneous and horizontally $L$-minitive. However, neither $f$ nor $g$ is a discrete Sugeno integral. To see that these additions
are sufficient, just note that $L$-min homogeneity (resp.\ $L$-max homogeneity) implies $\{0\}$-idempotency (resp.\ $\{1\}$-idempotency).

\item[(iii)] It was shown in \cite[{\S}2.2.3]{Mar98} that, when $L$ is a chain, a nondecreasing and idempotent function $f\colon
L^{n}\rightarrow L$ is Boolean min homogeneous (resp.\ Boolean max homogeneous) if and only if we have $f(\vect{e}\wedge c)\in\{f(\vect{e}),c\}$
(resp.\ $f(\vect{e}\vee c)\in\{f(\vect{e}),c\}$) for every $\vect{e}\in \{0,1\}^n$ and every $c\in L$.
\end{enumerate}
\end{remark}

\subsection{Lattice term functions}

A function $f\colon L^{n}\rightarrow L$ is said to be
\begin{itemize}
\item \emph{conservative} if, for every $\vect{x}\in L^n$, we have
\begin{equation}\label{eq:discretiz}
f(\vect{x})\in \{x_1,\ldots,x_n\}.
\end{equation}

\item \emph{weakly conservative} if (\ref{eq:discretiz}) holds for every $\vect{x}\in \{0,1\}^n$.
\end{itemize}

\begin{remark}
Conservative functions (also called \emph{quasi-projections}) were defined in the binary case by Pouzet et al.~\cite{PouRosSt96}.
\end{remark}

\begin{proposition}
Let $f\colon L^{n}\rightarrow L$ be a function. The following conditions are equivalent:
\begin{enumerate}
\item[(i)] $f$ is conservative.

\item[(ii)] For every nonempty $S\subseteq L$, we have $f(S^n)\subseteq S$.

\item[(iii)] For every nonempty $S\subseteq L$ and every $\vect{x}\in L^n$, if $f(\vect{x})\in S$ then there exists $i\in [n]$ such that $x_i\in
S$.
\end{enumerate}
\end{proposition}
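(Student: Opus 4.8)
The plan is to prove the three conditions equivalent via the cycle $(i)\Rightarrow(ii)\Rightarrow(iii)\Rightarrow(i)$, since each implication reduces to an elementary membership argument once the right test set $S$ is chosen. Conceptually, all three statements merely describe how the single value $f(\vect{x})$ sits among the components $x_1,\ldots,x_n$, so the proof amounts to translating between the assertion ``$f(\vect{x})$ is one of the $x_i$'' and the various membership conditions, with no structural properties of the lattice needed beyond $n\geqslant 1$.

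For $(i)\Rightarrow(ii)$, I would fix a nonempty $S\subseteq L$ and a tuple $\vect{x}\in S^n$; conservativeness gives $f(\vect{x})\in\{x_1,\ldots,x_n\}$, and since every $x_i\in S$ we get $f(\vect{x})\in S$, whence $f(S^n)\subseteq S$. For $(ii)\Rightarrow(iii)$, I would argue by contraposition: fix a nonempty $S$ and $\vect{x}\in L^n$ and suppose no component lies in $S$, i.e.\ every $x_i\in L\setminus S$. Then the set $T=L\setminus S$ is nonempty precisely because it contains $x_1$, and $\vect{x}\in T^n$, so $(ii)$ applied to $T$ yields $f(\vect{x})\in T=L\setminus S$, that is $f(\vect{x})\notin S$; contraposing gives $(iii)$. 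Finally, for $(iii)\Rightarrow(i)$, I would fix $\vect{x}\in L^n$ and apply $(iii)$ to the singleton $S=\{f(\vect{x})\}$: since $f(\vect{x})\in S$ trivially, $(iii)$ produces an index $i$ with $x_i\in S$, i.e.\ $x_i=f(\vect{x})$, so $f(\vect{x})\in\{x_1,\ldots,x_n\}$ and $f$ is conservative.

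The argument is entirely routine; the only part requiring a moment's care—and the closest thing to an obstacle—is selecting the correct test set at each step, namely the complement $L\setminus S$ in $(ii)\Rightarrow(iii)$ and the singleton $\{f(\vect{x})\}$ in $(iii)\Rightarrow(i)$. It is also worth noting that the choice of complement automatically handles the degenerate case $S=L$: there the contrapositive hypothesis ``no component lies in $S$'' can never hold, so the implication is vacuously true, and no separate case analysis is required.
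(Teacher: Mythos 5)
Your proposal is correct and follows essentially the same route as the paper: the same cycle $(i)\Rightarrow(ii)\Rightarrow(iii)\Rightarrow(i)$, with each step a membership argument built on a suitably chosen test set. The only difference is in $(iii)\Rightarrow(i)$, where you apply $(iii)$ directly to the singleton $S=\{f(\vect{x})\}$ while the paper argues by contradiction with $S=L\setminus\{x_1,\ldots,x_n\}$; both choices work, and yours is if anything slightly more direct.
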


\begin{proof}
Assume that $f$ is conservative. For every nonempty $S\subseteq L$ and every vector $\vect{x}\in S^n$, we have $f(\vect{x})\in S$, which proves
that $(i)\Rightarrow (ii)$. Now, assume that $(ii)$ holds and suppose that there exist $S\varsubsetneq L$ and $\vect{x}\in (L\setminus S)^n$
such that $f(\vect{x})\in S$. By $(ii)$, we have $f(\vect{x})\in L\setminus S$, a contradiction. Thus $(ii)\Rightarrow (iii)$. Finally, assume
that $(iii)$ holds and that $f$ is not conservative, that is, there exists $\vect{x}\in L^n$ such that $f(\vect{x})\notin \{x_1,\ldots,x_n\}$.
Then, choosing $S=L\setminus\{x_1,\ldots,x_n\}$ contradicts $(iii)$. This proves $(iii)\Rightarrow (i)$.
\end{proof}

Clearly, every conservative function is idempotent. Similarly, every weakly conservative function is $\{0,1\}$-idempotent. As observed in
\cite{CouMar1}, the term functions are exactly the weakly conservative discrete Sugeno integrals. Similarly, we can readily see that, when $L$ is
a chain, the term functions $f\colon L^{n}\rightarrow L$ are exactly the conservative discrete Sugeno integrals.

\begin{theorem}\label{mainChar3}
Let $L$ be a bounded chain and let $f\colon L^{n}\rightarrow L$ be a discrete Sugeno integral. The following conditions are equivalent:
\begin{enumerate}
\item[(i)] $f$ is a term function.

\item[(ii)] $f$ is conservative.

\item[(iii)] $f$ is weakly conservative.
\end{enumerate}
\end{theorem}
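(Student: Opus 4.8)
The plan is to prove the cyclic chain of implications $(i)\Rightarrow(ii)\Rightarrow(iii)\Rightarrow(i)$, exploiting the fact that $f$ is already assumed to be a discrete Sugeno integral, so I only need to relate the three conservativity-type conditions to one another within this class.

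First I would establish $(i)\Rightarrow(ii)$. A term function is built from projections by finitely many applications of $\wedge$ and $\vee$, so I would argue by induction on the construction of the term. The base case is trivial: a projection $\vect{x}\mapsto x_i$ is conservative since its value is always $x_i\in\{x_1,\ldots,x_n\}$. For the inductive step, if $g$ and $h$ are conservative, then for any $\vect{x}$ we have $g(\vect{x}),h(\vect{x})\in\{x_1,\ldots,x_n\}$; since $L$ is a chain, $g(\vect{x})\wedge h(\vect{x})$ and $g(\vect{x})\vee h(\vect{x})$ each equal one of $g(\vect{x}),h(\vect{x})$, hence lie in $\{x_1,\ldots,x_n\}$. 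Thus $g\wedge h$ and $g\vee h$ are conservative. This is where the chain hypothesis is essential, as the totality of the order is exactly what forces the meet or join of two coordinates to again be a coordinate.

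The implication $(ii)\Rightarrow(iii)$ is immediate, since weak conservativity is simply conservativity restricted to the Boolean vectors $\{0,1\}^n$.

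The main work lies in $(iii)\Rightarrow(i)$. Here I would use the characterization, recalled just before the theorem, that the term functions are exactly the conservative discrete Sugeno integrals on a chain. Since $f$ is already a discrete Sugeno integral by hypothesis, it suffices to upgrade weak conservativity to full conservativity and then invoke this characterization. I would use the simplex representation of Proposition~\ref{SimplexDNF}: for $\vect{x}\in L^n_\sigma$, we have $f(\vect{x})=\bigvee_{i\in[n+1]}(\alpha_f(S^{\uparrow}_\sigma(i))\wedge x_{\sigma(i)})$ with $\alpha_f(I)=f(\vect{e}_I)$. Weak conservativity says precisely that each value $\alpha_f(I)=f(\vect{e}_I)\in\{0,1\}$, so the fuzzy measure is $\{0,1\}$-valued; together with $\alpha_f(\varnothing)=f(\vect{0})=0$ and $\alpha_f([n])=f(\vect{1})=1$ from idempotency, the supremum in the simplex formula selects some coordinate $x_{\sigma(i)}$ (the smallest $i$ for which $\alpha_f(S^{\uparrow}_\sigma(i))=1$), whence $f(\vect{x})\in\{x_1,\ldots,x_n\}$ for every $\vect{x}$. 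This yields full conservativity, and the stated equivalence between conservative Sugeno integrals and term functions closes the loop. The main obstacle is verifying carefully that the $\{0,1\}$-valued measure really forces the join to collapse onto a single coordinate rather than a proper meet, but this follows cleanly from the monotonicity of $\alpha_f$ along the nested sets $S^{\uparrow}_\sigma(i)$.
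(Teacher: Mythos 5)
Your implications $(i)\Rightarrow(ii)$ and $(ii)\Rightarrow(iii)$ are fine: the induction on the construction of the term, using that in a chain $a\wedge b$ and $a\vee b$ always coincide with one of $a$, $b$, is a correct and pleasantly explicit proof of what the paper only asserts as ``readily seen''. The genuine gap is in $(iii)\Rightarrow(i)$. The ``characterization recalled just before the theorem'' --- that on a chain the term functions are exactly the conservative discrete Sugeno integrals --- is not an available prior result: it is stated in the paper without proof, immediately before Theorem~\ref{mainChar3}, and it \emph{is} the equivalence $(i)\Leftrightarrow(ii)$ of the theorem (the only part backed by an external source, namely \cite{CouMar1}, is the equivalence of $(i)$ and $(iii)$). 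So your argument establishes $(iii)\Rightarrow(ii)$ honestly, via the simplex formula, and then silently assumes $(ii)\Rightarrow(i)$, the one implication that is never proved anywhere in your text. As a self-contained proof it is therefore circular.

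The repair is short and uses only ingredients you already have. You correctly observe that weak conservativity plus $\{0,1\}$-idempotency forces $\alpha_f(I)=f(\vect{e}_I)\in\{0,1\}$ for every $I$, with $\alpha_f(\varnothing)=0$ and $\alpha_f([n])=1$; in particular $\alpha_f$ is a nonconstant $\{0,1\}$-valued map with $\alpha_f(\varnothing)=0$. The DNF representation (Proposition~\ref{DNF}, applied to the polynomial function $f$) then reads $f(\vect{x})=\bigvee_{I:\alpha_f(I)=1}\bigwedge_{i\in I}x_i$, and by the Remark following Proposition~\ref{DNF} such an expression is precisely a term function. This closes $(iii)\Rightarrow(i)$ directly, with no simplex detour and no appeal to the contested characterization; your simplex computation can then be kept, if you wish, as an independent proof of $(iii)\Rightarrow(ii)$. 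One minor slip in that computation: since the sets $S^{\uparrow}_\sigma(i)$ shrink as $i$ grows and $\alpha_f$ is isotone, the indices with $\alpha_f(S^{\uparrow}_\sigma(i))=1$ form an initial segment $\{1,\ldots,i^*\}$ with $1\leqslant i^*\leqslant n$, so the join collapses to $x_{\sigma(i^*)}$ for the \emph{largest} such index, not the smallest; the conclusion $f(\vect{x})\in\{x_1,\ldots,x_n\}$ is unaffected.
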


\begin{remark}
Not all nondecreasing and conservative functions are term functions. For instance, if $L$ is the real unit interval $[0,1]$, the binary function
$f\colon L^2\to L$, given by $f(x_1,x_2)=x_1\vee x_2$ on $[0.5,1]^2$ and by $f(x_1,x_2)=x_1\wedge x_2$ everywhere else, is nondecreasing and
conservative, but it is not a term function.
\end{remark}


\end{document}